\newtheorem{defn}{Definition}
\newtheorem{notn}[defn]{Notation}
\newtheorem{lemma}[defn]{Lemma}
\newtheorem{proposition}[defn]{Proposition}
\newtheorem{theorem}[defn]{Theorem}
\newtheorem{corollary}[defn]{Corollary}
\newtheorem{remark}[defn]{Remark}
\newtheorem{example}[defn]{Example}
\newenvironment{proof}[1]{
  \trivlist \item[\hskip \labelsep{\it #1}]}{\hfill\mbox{$\square$}
  \endtrivlist}
\def\Z{\mathbb{Z}}
\def\Q{\mathbb{Q}}
\def\R{{\rm{\bf R}}}
\def\C{{\rm{\bf C}}}
\def\D{{\rm{\bf D}}}
\def\cX{{\cal X}}
\def\cY{{\cal Y}}
\def\cZ{{\cal Z}}
\def\sRes{{\rm sR}}
\def\sResP{{\rm sResP}}
\def\IVT{{\rm{[IVT]}}}
\def\NnS{{\rm{[NnS]}}}
\def\OD{{\rm{[OD]}}}
\def\FTA{{\rm{[FTA]}}}
\def\Ind{{\rm{Ind}}}
\def\sign{{\rm{sign}}}
\def\Rem{{\rm{Rem}}}
\def\Prem{{\rm{Prem}}}
\def\im{{\rm{im}\,}}
\title{Quantitative Fundamental Theorem of Algebra}
\author{Daniel Perrucci$^{\flat}$\thanks{{\scriptsize Partially supported by the Argentinian grants} {\footnotesize UBACYT 20020160100039BA} 
{\scriptsize and} {\footnotesize PIP 11220130100527CO CO\-NI\-CET}}  \quad  Marie-Fran\c{c}oise Roy$^{\sharp}$ \\[5mm]
{\small ${\flat}$ Departamento de Matem\'atica, FCEN, Universidad de Buenos Aires}\\
{\small and IMAS UBA-CONICET,}\\
{\small Ciudad Universitaria, 1428 Buenos Aires, Argentina}\\ 
{\small ${\sharp}$ IRMAR (UMR CNRS 6625), Universit\'e de Rennes 1,} \\
{\small Campus de Beaulieu, 35042 Rennes Cedex,  France}}
\begin{document}

\maketitle

\begin{abstract}
Using subresultants, we modify a 
real-algebraic
proof due to 
 Eisermann of the 
Fundamental Theorem of Algebra (\FTA) to obtain the following quantitative information: 
in order to prove the \FTA \ for polynomials of degree $d$, 
the Intermediate Value  Theorem (\IVT)
 is required to hold only for real polynomials of degree at most $d^2$. 
We also 
explain
that the classical proof due to Laplace requires \IVT \ for real polynomials of exponential degree. These quantitative results highlight the difference in nature of these two proofs.
\end{abstract}

\bigskip

\noindent  {\small \textbf{Keywords:}  Fundamental Theorem of Algebra, Intermediate Value Theorem, Cauchy Index, Winding Number, Subresultant Polynomials, Sturm Chains.} 

\medskip

\noindent  {\small \textbf{AMS subject classifications:} 14P99, 12D10, 12D15. }

\section{Introduction}

Let $(\R, \le)$ be an ordered field. 
The fact that $\R$ admits an order compatible with the field
structure implies that ${\rm char}(\R) = 0$ and 
therefore $\R$ has an infinite number of elements. 
It also implies that $-1$ is not a square in 
$\R$ and, consequently, $\R[T]/\langle T^2 + 1 \rangle$ $= \R[i] = \C$ 
is an algebraic field extension of $\R$ of degree 2.

We consider the following properties on $(\R, \le)$. 
\begin{itemize}
\item $\IVT$ (\emph{Intermediate Value Theorem}): for every polynomial $F \in \R[X]$ and
every $a, b$ in $\R$ with $a < b$ and $F(a)F(b) < 0$, there exists
$c \in \R$ with $a < c < b$ such that $F(c) = 0$. 

\item $\NnS$ (\emph{Non-negative elements are Squares}): for every $a \in \R$ with $a \ge 0$, there exists $c \in \R$
such that $a = c^2$. 

\item $\OD$ (\emph{An odd degree polynomial has a root}): for every polynomial $F \in \R[X]$ of odd degree, 
there exists $c \in \R$ such that $F(c) = 0$.

\item $\FTA$ (\emph{Fundamental Theorem of Algebra}): for every polynomial $F \in \C[Z] \setminus \C$, 
there exists $z \in \C$ such that $F(z) = 0$
(i.e., $\C$ is an algebraically closed field). 

\end{itemize}

If $\le$ denotes the usual order over the real numbers, $(\mathbb{R}, \le)$ and $(\mathbb{R}_{\rm alg}, \le)$ are typical examples 
of ordered fields 
satisfying all the properties above
(where $\mathbb{R}_{\rm alg}$ is the set of real algebraic numbers), whereas $(\mathbb{Q}, \le)$ is a 
typical example of an ordered field 
satisfying none of the properties above.

The next theorem is a classical result in real algebraic geometry (see for instance
\cite[Chapter 1]{BCR}).

\begin{theorem}\label{th:wellknown}
Let $(\R, \le)$ be an ordered field. The following conditions are equivalent:
\begin{enumerate}
\item[a)] $(\R, \le)$ satisfies \IVT.
\item[b)] $(\R, \le)$ satisfies \NnS \ and \OD.
\item[c)] $(\R, \le)$ satisfies \FTA.
\end{enumerate}
\end{theorem}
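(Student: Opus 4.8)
The plan is to establish the cycle of implications $a)\Rightarrow b)\Rightarrow c)\Rightarrow a)$. For $a)\Rightarrow b)$, both statements follow by applying \IVT\ to well-chosen polynomials. To obtain \NnS\ from an element $a\ge 0$ we may assume $a>0$ and take $F=X^2-a$: then $F(0)=-a<0$ while $F(1+a)=1+a+a^2>0$, so \IVT\ yields $c$ with $c^2=a$. To obtain \OD\ we may assume the given odd-degree polynomial $F$ is monic; using a Cauchy-type bound $M>0$ on its coefficients one checks that $\sign F(M)=1$ and $\sign F(-M)=-1$ (the top-degree term dominates), and \IVT\ supplies a root.

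The implication $b)\Rightarrow c)$ is the heart of the matter and the step I expect to be the main obstacle; it is the classical Artin-style argument via Galois theory. Recall first that \FTA\ is equivalent to $\C$ having no proper finite extension (pass to a splitting field). The two field-theoretic inputs are: (i) every element of $\C$ has a square root in $\C$ --- writing $z=a+bi$, from $a^2+b^2\ge 0$ and $\sqrt{a^2+b^2}\ge |a|$ one uses \NnS\ to extract in $\R$ the square roots of $(\sqrt{a^2+b^2}\pm a)/2$ and then fixes signs; and (ii) since $\mathrm{char}(\R)=0$, completing the square shows every degree-$2$ extension of $\C$ has the form $\C[\sqrt{w}]$ with $w\in\C$, so by (i) there is no such extension. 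Now take a finite extension $L\supseteq\C$; replacing $L$ by a normal closure we may assume $L/\R$ is Galois with group $G$ of order $2^m q$, $q$ odd. A Sylow $2$-subgroup $H\le G$ has fixed field of degree $q$ over $\R$, and by the primitive element theorem together with \OD\ (an irreducible polynomial of odd degree $>1$ cannot have a root in $\R$) this forces $q=1$. Hence $G$, and therefore its subgroup $\mathrm{Gal}(L/\C)$, is a $2$-group; were the latter nontrivial it would have a subgroup of index $2$, yielding a degree-$2$ extension of $\C$, which is impossible. Thus $L=\C$, proving \FTA.

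Finally, for $c)\Rightarrow a)$, let $F\in\R[X]$ and $a<b$ in $\R$ with $F(a)F(b)<0$; in particular $F$ is nonconstant and $a,b$ are not roots. By \FTA\ we may split $F$ over $\C$ and group the non-real roots into conjugate pairs, obtaining
\[
F=\ell\prod_i (X-r_i)\prod_j\big((X-\re z_j)^2+(\im z_j)^2\big),\qquad r_i\in\R,\ \im z_j\neq 0 .
\]
Each quadratic factor is strictly positive on $\R$ because $(\im z_j)^2>0$, so for real $x$ that is not a root one has $\sign F(x)=\sign(\ell)\prod_i\sign(x-r_i)$. From $F(a)F(b)<0$ we deduce $\prod_i\sign\big((a-r_i)(b-r_i)\big)=-1$, and since the $i$-th factor is $-1$ precisely when $r_i\in(a,b)$ and $+1$ otherwise, the number of $r_i$ (counted with multiplicity) in the interval $(a,b)$ is odd, hence at least one; such an $r_i$ is the point required by \IVT. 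Besides the Galois-theoretic step, the only points needing care are making the Cauchy bound argument valid over an arbitrary ordered field and checking that complex conjugation, the nontrivial element of $\mathrm{Gal}(\C/\R)$, is well defined on $\C=\R[i]$.
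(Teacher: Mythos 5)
Your proposal is correct, and your implications $a)\Rightarrow b)$ and $c)\Rightarrow a)$ coincide with the paper's sketch (apply \IVT\ to $X^2-a$ and to an odd-degree polynomial at $\pm M$ for a Cauchy-type bound $M$; conversely factor into linear and positive-definite quadratic factors). The genuine difference is in the central implication $b)\Rightarrow c)$: you give the Artin-style Galois-theoretic argument (pass to a Galois closure of a finite extension of $\C$, use the primitive element theorem plus \OD\ to kill the odd part of the Galois group via a Sylow $2$-subgroup's fixed field, then use \NnS\ through complex square roots to rule out index-$2$ subgroups over $\C$), whereas the paper follows Laplace's purely algebraic induction: for $G\in\R[Z]$ of degree $d=2^k s$ one inducts on $k$, the base case $k=0$ being \OD, and the inductive step producing from the roots in a splitting field auxiliary polynomials of degree $\binom{d}{2}$ whose $2$-adic valuation has dropped, with \NnS\ handling the quadratic extraction at the end. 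Both arguments are valid; yours is shorter and conceptually clean but imports Sylow theory, the Galois correspondence and the primitive element theorem, while Laplace's is elementary and, decisively for this paper, makes explicit the degrees of the polynomials to which \OD\ (hence \IVT) must be applied --- it is exactly this bookkeeping that yields the functions $\beta$ and $\gamma$ and the exponential lower bound discussed after the theorem, an accounting that the Galois route hides (and which, if carried out there, would likewise involve auxiliary data of exponential degree, e.g.\ the odd-degree minimal polynomial of a primitive element). Your two flagged points of care (a Cauchy bound valid over any ordered field, and conjugation on $\C=\R[i]$) are indeed the right ones and are unproblematic.
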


If $(\R, \le)$ satisfies these conditions, then it is easy to see that 
the field order $\le$ on $\R$ is unique and $\R$ is said to be a {\bf real closed field}. 

We sketch briefly a proof of Theorem \ref{th:wellknown}, which is essentially 
Laplace's proof \cite{Lap}.

\begin{proof}{Sketch of the proof of Theorem \ref{th:wellknown}:}
Proving that \IVT \ implies \NnS \ is very simple: for 
$a = 0$ we take $c = 0$;  and 
for $a > 0$ we consider the polynomial $F :=  X^2 - a \in \R[X]$ and
notice that $F(0) < 0$ and $F(a+1) > 0$, then \IVT  \
ensures the existence of a $c \in \R$ such that $F(c) = 0$, or 
equivalently, $a = c^2$. In fact, adding the condition $c \ge 0$, it
is  easy to prove the uniqueness of such $c$.

In a similar way, in order to prove that \IVT \ implies \OD \ 
we only need to note that an odd degree polynomial necessarily changes its sign
when evaluated at $a$ and $-a$ with $a \in \R$ big enough.

The proof that \NnS \ and \OD \ imply \FTA \ is much more sophisticated. 
To prove that $F \in \C[Z] \setminus \C$ 
has a root in $\C$, 
it is enough to prove that the polynomial $F \overline{F} \in \R[Z] \setminus \R$ 
has a 
root $w$ in $\C$ (where $\overline{F}$ means
the polynomial obtained from $F$ by usual conjugation in $\C$ of the 
coefficients of $F$); in this case either $w$ or $\overline{w}$ is a root of $F$.
Now, in order to show that an arbitrary polynomial $G \in \R[Z] \setminus \R$ of degree $d$
has a root in $\C$, the proof proceeds by induction on the highest 
value of $k$ such that $2^k$ divides
$d$. In the base case, which is $k = 0$ (and therefore odd $d$), the existence of a root of $G$
in $\R \subset \C$ is ensured by \OD. For $k \ge 1$ (and therefore even $d$), the existence 
of a root of $G$ in  $\C$ is ensured by a clever argument
involving \NnS \ and the fact that every polynomial 
in $\R[Z]$ of degree $\binom{d}{2}$ has a root in $\C$. 
Note that the highest power of $2$
dividing $\binom{d}{2} = \frac12d(d-1)$ is $2^{k-1}$ and then the inductive hypothesis holds. 

Finally, assuming \FTA, it 
is possible to prove that the irreducible elements in the unique factorization domain $\R[X]$ have 
degree $1$ or $2$ and that the irreducible monic elements in $\R[X]$ of 
degree $2$ are positive when evaluated at any $r \in \R$. From these facts, 
\IVT \ holds easily. 
\end{proof}

The main concern in the present work is the following question:
assuming that \IVT \ holds for $(\R, \le)$,
if we take a fixed value of $d\in \Z_{\ge 1}$ and
we only want to prove that 
every polynomial in $\C[Z] \setminus \C$ of degree
less than or equal to $d$ has a root in $\C$, which
is the highest degree of a polynomial in $\R[X]$ for
which \emph{we need} the Intermediate Value Theorem to hold?

With the aim of stating our problem precisely, we consider for each $d \in \Z_{\ge 1}$, the following properties on $(\R, \le)$.
\begin{itemize}
\item $\IVT_d$: for every polynomial $F \in \R[X]$ with $\deg F \le d$ and
every $a, b$ in $\R$ with $a < b$ and $F(a)F(b) < 0$, there exists
$c \in \R$ with $a < c < b$ such that $F(c) = 0$. 

\item $\FTA_d$: for every polynomial $F \in \C[Z] \setminus \C$ 
with $\deg F \le d$, 
there exists $z \in \C$ such that $F(z) = 0$. 
\end{itemize}

We can now restate our main concern as follows:
$$
\hbox{Given } d\in \Z_{\ge 1}, 
\hbox{ which is the lowest value of } \alpha(d) \in \Z_{\ge 1} 
\hbox{ for which }
\IVT_{\alpha(d)} \hbox{ implies } \FTA_d?
$$

In order to evaluate 
from this new quantitative point of view
the  proof of Theorem \ref{th:wellknown} we sketched,
we 
define the following functions:
\begin{notn}
Let $\beta, \gamma: \Z_{\ge 1} \to \Z_{\ge 1}$ defined as follows:
$$\begin{array}{rcl}
\beta (d) &:=&\left \{ \begin{array}{ll}
d & \hbox{if } d \hbox{ is odd}, \cr
\beta\big(\binom{d}2 \big) & \hbox{if } d \hbox{ is even,}
                                           \end{array} \right. \cr
\gamma(d) &:=& \max_{1 \le e \le d}\{\beta(2e)\}.
\end{array} 
                                           $$
\end{notn}

Note that $\gamma(1) = \beta(2) = 1$ and  for $d \ge 2$ we have that $\gamma(d) \ge \beta(4) = 15$.
Note also that $\gamma$ is a non-decreasing function, whereas the 
behavior of $\beta$ is rather chaotic.

First, we have that $\FTA_1$ holds even under no assumptions on $(\R, \le)$. Then,                                        
for a fixed $d \ge 2$ and a polynomial 
$F \in \C[Z]$ with $e = \deg F \le d$, 
in order to be able to apply the proof of Theorem \ref{th:wellknown}
we need to ensure 
\NnS \ and the fact that the Intermediate Value Theorem holds
for polynomials in $\R[X]$ of degree $\beta(2e)$. 
Since $\IVT_2$ implies \NnS, we have that
$$
\IVT_{\gamma(d)} \hbox{ implies } \FTA_d.
$$
The final conclusion is that $\alpha(d) \le \gamma(d)$. 

Now we want to exhibit explicit bounds for $\gamma$. 
It is possible to prove that for $d \in \Z, d \ge 4$, if $d = 2^ks$ with $k \in \Z_{\ge 0}$ and odd
$s \in \Z_{\ge 1}$
then  
$$
\frac83 \left(\frac34 2^{k-1}s  \right)^{2^k} \le \beta(d) \le 2 \left(2^{k-1}s  \right)^{2^k} \le 
2 \left(\frac{d}2 \right)^{d}.
$$
Then, for $d \in \Z, d \ge 4$, we have 
$$
\gamma(d)  \le   2 d^{2d}. 
$$
Also, by taking $k' := \lfloor \log_2 d \rfloor$, since $2^{k'} \le d < 2^{k'+1}$,
$$
\left(\frac38\right)^{d-1}d^d
=
\frac83 \left(\frac34 \frac{d}2  \right)^{d}
<
\frac83 \left(\frac34 2^{k'}  \right)^{2^{k'+1}}
\le
\beta(2^{k'+1})
\le \gamma(d).
$$

In this way, we know that $\gamma$ is bounded from below and above by 
exponential functions. This leads to an exponential upper bound
for $\alpha$, which 
cannot be avoided 
as long as we keep attached to the proof we sketched of Theorem \ref{th:wellknown}.

The exponential value of $\gamma(d)$ plays a significant
role in the bounds obtained in a recent joint work of the authors with 
Henri Lombardi, giving a new constructive proof for Hilbert 17-th problem and Positivstellensatz and providing
elementary recursive degree bounds \cite{LPR}.
Exploring other algebraic proofs of \FTA \ from a quantitative point of view might be a first step in the improvement of the results of \cite{LPR}. This hope is part of our motivation in this paper.

In \cite{Eis}, Michael Eisermann found a proof of the Fundamental Theorem of 
Algebra which is also
valid in any real closed field, 
but in opposition to Laplace's proof which is purely algebraic, 
has a large 
real-algebraic geometry flavor.
 This 
 proof of Eisermann can be seen as a real-algebraic adaptation of one of the classical proofs of the
 Fundamental Theorem of Algebra using winding numbers and homotopy
 (see \cite[Chapter 8]{FiRo}).
 One of the main ingredients of Eisermann's proof is  the
$\emph{Cauchy index}$ of two polynomials
which, roughly speaking, is the number of jumps from $-\infty$ to $+\infty$ minus
the number of jumps from $+\infty$ to $-\infty$ that the function 
associated to their quotient
has in a given interval. 
From the fact that the base ordered field $(\R, \le)$ satisfies $\IVT$, 
it follows an 
$\emph{inversion formula}$ which implies that Cauchy indices can be computed 
by counting sign variations on Sturm chains.
Another of the main ingredients of Eisermann's proof is the 
fact that the
$\emph{winding number}$ of a complex function on a rectangle, 
which counts the number of zeros of the function in the 
given rectangle, 
can be computed in a completely real-algebraic way by means of 
Cauchy indices on the boundary of the rectangle.
One of the most intricate steps in Eisermann's proof is to prove the Main Lemma 
(see \cite[Lemma 5.3]{Eis}): 
 if a polynomial does not vanish in a rectangle, then the associated winding number is zero.
This is achieved by a clever cancellation of terms for a suitable division of the rectangle
under consideration.
A crucial property for this cancellation is that, considering in the bivariate case 
one variable as the main variable and the second variable as a parameter, the 
(pseudo-)remainder sequence produces  Sturm chains
when specializing the parameter but also when specializing the main variable. 
Finally, by means of 
algebraic homotopy-like tools, the proof follows by computing 
the winding number in a well-known
special case.
Then the conclusion follows.

Our strategy is similar to the one by Eisermann, and also uses
the
$\emph{Cauchy index}$ of two polynomials and the $\emph{winding number}$ of a complex function on a rectangle.
The main new ingredient is that we use subresultants rather than (pesudo)-remainder sequences
to compute the Cauchy index. In order to be able to do this, we introduce the notion of $(\sigma, \tau)$-chain, which 
is a generalization of the notion of Sturm chain and prove that it can be used to compute Cauchy indices.
In case the base ordered field $(\R, \le)$ satisfies $\IVT_{d^2}$, 
it follows a refinement of the 
$\emph{inversion formula}$ which implies that Cauchy indices can be computed 
by counting sign variations on  $(\sigma, \tau)$-chains, when the degrees of the polynomials in the  $(\sigma, \tau)$-chains are all bounded by $d^2$.
Again,  the most intricate step in our proof of the Quantitatve Fundamental Theorme of Algebra is to prove the Quantitative Main Lemma  (Lemma \ref{prop:non_van_zero}):  if $\IVT_{d^2}$ holds and
if a polynomial of degree $d$ does not vanish in a rectangle, then the associated  winding number is zero.
This is also achieved by a cancellation of terms for a suitable   division of the rectangle
under consideration. 
As before, a crucial property for this cancellation is that, considering in the bivariate case one variable as the main variable and the second variable as a parameter, the subresultant sequence produces  $(\sigma, \tau)$-Sturm chains
when specializing the parameter but also when specializing the main variable. 
By using degree bounds on suresultant polynomials, we obtain that 
intermediate polynomials relevant to the proof are all of degree bounded by $d^2$.

Using this strategy, we prove the
following theorem which is our main result.

\begin{theorem}\label{th:main} For $d \in \Z_{\ge 1}$
$$
\IVT_{d^2} \hbox{ implies } \FTA_d.
$$
\end{theorem}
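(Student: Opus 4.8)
The plan is to carry out, in a quantitative form, the real-algebraic strategy of Eisermann: attach to a complex polynomial that does not vanish on the boundary of a rectangle a \emph{winding number} computed from Cauchy indices on that boundary, show this number is $0$ when the polynomial has no zero inside the rectangle, and compute it to equal $d$ on a sufficiently large rectangle. First I would reduce to the case of a monic $F\in\C[Z]$ with $\deg F = d\ge 2$: the case $d=1$ is trivial (and $\FTA_1$ needs no hypothesis), while dividing by the leading coefficient does not change the zero set.

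Next I would set up the algebraic machinery. For $P,Q\in\R[X]$ and $a<b$ with $P(a)P(b)\neq 0$, define the Cauchy index $\Ind(Q/P;a,b)$ as the signed count of jumps of $Q/P$ across $[a,b]$. Its computation rests on an inversion formula: under $\IVT$, the quantity $\Ind(Q/P;a,b)+\Ind(P/Q;a,b)$ depends only on the signs of $P$ and $Q$ at $a$ and $b$, and iterating this along a remainder sequence expresses $\Ind(Q/P;a,b)$ as a sign variation count on a Sturm chain. The new ingredient is to replace the remainder sequence by the subresultant sequence $\sRes_j(P,Q)$, packaged as a $(\sigma,\tau)$-chain, and to prove a refined inversion formula: the Cauchy index is still recovered from sign variations, but now the only instances of the Intermediate Value Theorem invoked are for the polynomials occurring in the $(\sigma,\tau)$-chain, whose degrees are controlled by subresultant degree bounds. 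One then defines, for a complex polynomial $F$ not vanishing on the boundary of a rectangle $R$, the winding number $w(F;R)$ as a suitable combination of the Cauchy indices of $\re F$ and $\im F$ along the four sides, and checks additivity of $w$ under subdivision of $R$.

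The heart of the proof is the Quantitative Main Lemma (Lemma \ref{prop:non_van_zero}): if $\IVT_{d^2}$ holds and $F$ of degree $d$ does not vanish on $R$, then $w(F;R)=0$. Here one views $F(X+iY)$ as a bivariate polynomial; the decisive structural fact is that the subresultant sequence of $\re F$ and $\im F$ specializes to a $(\sigma,\tau)$-Sturm chain both when the parameter is specialized (this yields the Cauchy indices along the horizontal and vertical sides) and when the main variable is specialized (this yields the needed sign data at the corners and cut points). Cutting $R$ by one horizontal and one vertical line into four subrectangles and summing the winding numbers, the contributions along the two interior segments cancel in pairs by this double-specialization compatibility; iterating the subdivision and using that $F$ has no zero in $R$ forces $w(F;R)=0$. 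Throughout, the degree bounds on subresultant polynomials ensure that every univariate polynomial whose Cauchy index is invoked has degree at most $d^2$, so that the refined inversion formula is available under the sole hypothesis $\IVT_{d^2}$.

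Finally I would assemble the argument. Choose a rectangle $R_0$ centered at the origin whose half-side exceeds a bound depending only on the coefficients of $F$ (a classical Cauchy-type bound on the moduli of the roots), so that $F$ does not vanish on $\partial R_0$ and $|F(z)-z^d|<|z^d|$ there. A homotopy-like comparison of $F$ with its leading term $Z^d$ along $\partial R_0$ then gives $w(F;R_0)=w(Z^d;R_0)$, and $w(Z^d;R_0)$ is computed directly to be $d$. If $F$ had no zero in $\C$, it would in particular have no zero in $R_0$, and Lemma \ref{prop:non_van_zero} would give $w(F;R_0)=0$, contradicting $d\ge 2$. Hence $F$ has a complex zero, which proves $\FTA_d$. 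The main obstacle is Lemma \ref{prop:non_van_zero}: making the cancellation-of-terms argument for the subdivision go through while simultaneously certifying that all subresultant polynomials produced — both by specializing the parameter and by specializing the main variable — stay within degree $d^2$, and that the $(\sigma,\tau)$-chain formalism is robust enough for the refined inversion formula to apply on every subrectangle using only $\IVT_{d^2}$.
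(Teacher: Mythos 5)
Your overall architecture is the same as the paper's (Cauchy indices, a refined inversion formula under $\IVT_{d^2}$, subresultants packaged as $(\sigma,\tau)$-chains with the $d^2$ degree bound, a Quantitative Main Lemma, and a homotopy to $Z^d$ on a large box), but your sketch of the Quantitative Main Lemma has a genuine gap at its core. You propose to cut the rectangle by one horizontal and one vertical line, observe that interior contributions cancel, and then ``iterate the subdivision'' so that non-vanishing of $F$ ``forces'' $w(F\,|\,\partial\Gamma)=0$. The cancellation along interior edges is nothing but orientation/additivity (Lemma \ref{lem:grid_partition}); it only re-expresses $w(F\,|\,\partial\Gamma)$ as a sum of winding numbers of subrectangles, so no amount of iteration by itself concludes anything. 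And if the iteration is meant to be infinite (nested rectangles shrinking to a point where $F$ would have to vanish), that is a compactness/completeness argument which is unavailable here: the only hypothesis on $(\R,\le)$ is $\IVT_{d^2}$, and such a field need not be archimedean or complete. What is actually needed, and what the paper does, is an \emph{algebraically prescribed} finite grid: the cut lines are placed at the zeros in $[y_0,y_1]$ (resp.\ $[x_0,x_1]$) of the finitely many auxiliary polynomials coming from the subresultant chain (the last subresultant and the coefficients $T_{d_{i}-1}$, ${\sRes}_{d_i}$, plus degenerate specializations), after first extracting the gcd of $F_{\rm re}$ and $F_{\rm im}$ so the chain ends in a nonzero element of $\R[Y]$. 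On a subrectangle whose $Y$-interval (or $X$-interval) avoids these zeros, the chain specializes to a \emph{good} Sturm $(\sigma,\tau)$-chain on all four sides, and the sum of the four ${\rm Var}(\sigma,\tau)$ terms telescopes to zero around the boundary (Corollary \ref{cor:good_Sturm_counting}); the finitely many bad grid points are isolated inside small $\delta$-rectangles handled by the local lemma (Proposition \ref{prop:non_van_zero_local}), which is the only place where $F\ne 0$ on $\Gamma$ enters. Your phrase ``double-specialization compatibility'' points at the right ingredient, but you attach it to the interior-edge cancellation rather than to the per-rectangle vanishing, where it actually does the work.

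Two secondary points you gloss over, which the paper has to address: (i) to run the subresultant machinery on $F_{\rm re},F_{\rm im}$ you need a degree drop in the chosen main variable, hence the ``well-controlled'' normalization (replacing $F$ by $iF$ as needed, separately for the $X$- and $Y$-directions) and the removal of a possible common factor; (ii) your ``homotopy-like comparison of $F$ with $Z^d$'' is not a Rouch\'e argument here — it must go through the cube identity (Lemma \ref{lem:homot_cube}) with the Main Lemma applied to the bivariate restrictions on the side faces, and the straight-line homotopy from $Z^e$ to $F$ does not obviously keep those restrictions well-controlled, which is why the paper splits the homotopy into two stages ($H_0$ deforming the real part of the tail, $H_1$ the imaginary part) as in Theorem \ref{thm:almost_main}. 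These are repairable, but as written the Main Lemma step is not a proof.
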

In other words, Theorem \ref{th:main} is equivalent to saying that 
$\alpha(d) \le d^2$.
Since $d \mapsto d^2$ is a polynomial function (actually $d^2 \le \gamma(d)$ 
for all  $d \in \Z_{\ge 1}$), 
our result
highlights the difference in nature between Laplace's proof and our modification of Eisermann's proof.

\section{Preliminaries}\label{sc:prelim}

In subsection \ref{subsec:cauch_index} and  subsection \ref{subsec:winding} we introduce Cauchy indices and winding numbers. In subsection \ref{subsec:boundedIVT} we extend results by Eisermann on Cauchy indices and winding numbers \cite{Eis} to the case where the Intermediate Value Theorem  holds only for polynomials of bounded degrees. Finally in subsection \ref{sec:subres} we give the needed preliminaries about subresultants.

\subsection{Cauchy index}\label{subsec:cauch_index} 
As said in the introduction, the Cauchy index of 
two polynomials $Q$ and $P$
on an interval 
is, roughly speaking, the number of jumps from $-\infty$ to $+\infty$ minus
the number of jumps from $+\infty$ to $-\infty$ that the function 
associated to their quotient $\displaystyle{\frac{Q}P}$
has in this interval.
We recall now the precise definition of Cauchy index following \cite[Section 3]{Eis}.

\begin{notn}
Let $x \in \R$, we denote the sign of $x$ by 
$$  
\sign(x) := \left\{ \begin{array}{ll}
1  & \hbox{if } x > 0, \\
0  & \hbox{if } x = 0, \\
-1  & \hbox{if } x < 0. \\
\end{array}\right.
$$
\end{notn}

\begin{defn}\label{defn:CI_at_a_point} Let $x \in \R$ and $P, Q \in \R[X]$.
\begin{itemize}
\item 
If $P, Q \in \R[X]\setminus\{0\}$, the polynomials $P$ and $Q$ can be written uniquely as

$$
P = (X - x)^{\mu(x)}  \widetilde P,
$$
$$
Q = (X - x)^{\nu(x)} \widetilde Q,
$$
with $\mu(x),\nu(x)\in \Z_{\ge 0}$ and  $\widetilde P(x) \ne 0, \widetilde Q(x) \ne 0$. 

 For $\varepsilon \in \{+1, -1\}$, define

$$
\Ind_x^{\varepsilon}(Q,P) := \left\{
\begin{array}{ll}
 \frac{1}2  \sign(\widetilde{Q}(x) \widetilde{P}(x)) & \hbox{if } \varepsilon = +1 \hbox{ and } \mu(x)>\nu(x),\\[3mm]
 \frac{1}2  (-1)^{\mu(x)-\nu(x)}  \sign(\widetilde{Q}(x) \widetilde{P}(x)) & \hbox{if } 
 \varepsilon = -1 \hbox{ and } \mu(x)>\nu(x),\\[3mm]
0 & \hbox{otherwise}.
 \end{array}
\right.
$$

\item If $P = 0$ or $Q=0$, define 
$$\Ind_x^{\varepsilon}(Q,P) :=0.$$

\item The Cauchy index of $(Q,P)$ at $x$ is 
$$
\Ind_x(Q, P) := 
\Ind_x^{+} (Q, P)
-
\Ind_x^{-} (Q, P).
$$
\end{itemize}
\end{defn}

We illustrate this notion considering the graph of the function
$\displaystyle{\frac{Q}P}$ around $x$ in each different case.

\begin{center}
\begin{tikzpicture}
      \draw[-] (-6.5,0) -- (-4,0);
      \draw[-] (-3,0) -- (-0.5,0);
      \draw[-] (0.5,0) -- (3,0);
      \draw[-] (4,0) -- (6.5,0);
      \draw[-] (-5.2,-0.1) -- (-5.2,0.1) node[above] {$x$} ;
      \draw[-] (-1.7,-0.1) -- (-1.7,0.1) node[above] {$x$} ;
      \draw[-] (1.7,-0.1) -- (1.7,0.1) node[above] {$x$} ;
      \draw[-] (5.2,-0.1) -- (5.2,0.1) node[above] {$x$} ;
      \draw[line width=0.8pt, domain=-6.5:-5.3,smooth,variable=\x] plot ({\x+0.05},{1/(\x+5)+1.85});
      \draw[line width=0.8pt, domain=-5.05:-4,smooth,variable=\x,] plot ({\x-0.05},{-1/(\x+5.4)+1.35});
      \draw[line width=0.8pt, domain=-3:-1.8,smooth,variable=\x] plot ({\x},{1/(\x+1.5)+2});
      \draw[line width=0.8pt, domain=-1.5:-0.5,smooth,variable=\x] plot ({\x-0.05},{1/(\x+1.8)-2});
      \draw[line width=0.8pt, domain=0.5:1.5,smooth,variable=\x] plot ({\x+0.1},{-2/(\x-2)-2.5});
      \draw[line width=0.8pt, domain=1.65:3,smooth,variable=\x] plot ({\x+0.05},{-0.3/(\x-1.5)+0.75});
      \draw[line width=0.8pt, domain=4:5.25,smooth,variable=\x,] plot ({\x-0.05},{-0.2/(\x-5.4)+0.25});
      \draw[line width=0.8pt, domain=5.25:5.9,smooth,variable=\x] plot ({\x+0.1},{1/(\x-5)-2.4}); 
      \node at (-5.2,-2.1) {$\Ind_x(Q,P) = 0 $};
      \node at (-1.7,-2.1) {$\Ind_x(Q,P) = 1 $};
      \node at (1.7,-2.1) {$\Ind_x(Q,P) = -1 $};
      \node at (5.2,-2.1) {$\Ind_x(Q,P) = 0 $};
      
      \end{tikzpicture}
\end{center}

\begin{defn} Let $a, b \in \R$ and $P, Q \in \R[X]$.
\begin{itemize}
\item If $a < b$ and $P, Q \ne 0$, the Cauchy index of 
$(Q,P)$  on the interval $[a, b]$ is 
$$
\Ind_a^b(Q,P) := 
\Ind_a^+(Q,P)  + \sum_{x \in (a,b) }  \Ind_x (Q,P)
- \Ind_b^-(Q,P),
$$
where the sum is well-defined since only roots $x$ of $P$ in $(a, b)$ contribute. 
\item  If $a > b$ and $P, Q \ne 0$, 
$$ \Ind_a^b(Q,P) := - \Ind_b^a(Q,P).$$
\item In every other case, 
$$ \Ind_a^b(Q,P) := 0.$$
\end{itemize}
\end{defn}

In the following picture we consider again the graph of the function
$\displaystyle{\frac{Q}P}$, this time in $[a, b]$.

\begin{center}
\begin{tikzpicture}
      \draw[-] (-7,0) -- (-1,0);
      \draw[-] (1,0) -- (7,0);
      \draw[-] (-6.8,-0.1) -- (-6.8,0.1) node[above] {$a$};
      \draw[-] (-1.2,-0.1) -- (-1.2,0.1) node[above] {$b$};
      \draw[-] (1.2,-0.1) -- (1.2,0.1) node[above] {$a$};
      \draw[-] (6.8,-0.1) -- (6.8,0.1) node[above] {$b$} ;
      \draw[line width=0.8pt, domain=-7:-6.13,smooth,variable=\x] plot ({\x+0.05},{1/(\x+5.9)+3});
      \draw[line width=0.8pt, domain=-5.9:-4.1,smooth,variable=\x,] plot ({\x},{-1/((\x+6.05)*(\x+3.95))-1.3});
      \draw[line width=0.8pt, domain=-3.9:-2.2,smooth,variable=\x] plot ({\x},{(\x+3)/((\x+4.11)*(\x+1.9))});
      \draw[line width=0.8pt, domain=-1.9:-1,smooth,variable=\x] plot ({\x-0.1},{(\x+3)/((\x+4.1)*(\x+2.05))-1.2});
      \draw[line width=0.8pt, domain=1:2.46,smooth,variable=\x] plot ({\x},{-(\x-3)/((\x-4.1)*(\x-2.55))-1.5});
      \draw[line width=0.8pt, domain=2.5:5,smooth,variable=\x] plot ({\x},{-0.65*(\x-3.5)/((\x-5.2)*(\x-2.35))+0.3});
      \draw[line width=0.8pt, domain=5.2:6.69,smooth,variable=\x] plot ({\x},{-0.5*(\x-6)/((\x-5.05)*(\x-6.8))+0.3});
      \node at (-4,-2.1) {$\Ind_a^b(Q,P) = 1 + 0 +1 = 2 $};
      \node at (4,-2.1) {$\Ind_a^b(Q,P) = -1 -1 -\frac12 = -\frac52$};
      \end{tikzpicture}
\end{center}

Note that 
 the Cauchy index of 
a pair of polynomials
on an interval belongs to $\frac12 \Z$ and it is not necessarily an integer number.

\begin{remark} \label{rem:deg_ind} 
\rm{If both $P$ and $Q$ are multiplied by $S\in \R[X]\setminus \{0\}$, it is clear that 
$\Ind_a^b(Q,P)=\Ind_a^b(QS, PS)$, so when $P\not=0$ the Cauchy index 
is associated to the rational function $\displaystyle{\frac{Q}{P}}$ rather than to the pair of polynomials $(Q,P)$. 
However, when $P=0$, it is convenient for us to define also the Cauchy index, even if the rational function $\displaystyle{\frac{Q}{P}}$ does not make sense.
This is the reason why we use the notation $\Ind_a^b(Q,P)$ in all cases.}
\end{remark}

\begin{remark}
\rm{Even though it is not reflected in the notation, the field $\R$ plays a fundamental role 
in the definition of the Cauchy index. For instance, 
consider $P := X^2 - 2, Q := 1 \in \Q[X] \subset \mathbb{R}[X]$.
If we take $\R = \Q$  we have
$$
\Ind_{1}^2(Q,P) = 0, 
$$
whereas if we take $\R = \mathbb{R}$ we have
$$
\Ind_{1}^2(Q,P) = 1. 
$$}
\end{remark}

\begin{remark}\label{rem:lin_resc}
\rm{Cauchy index is invariant by affine change of variables:  
given any affine function $\ell:[a, b] \to \R$, and $P, Q \in \R[X]$, 
$$
\Ind_{\ell(a)}^{\ell(b)}(Q,P) = \Ind_a^b(Q \circ \ell,P \circ \ell). 
$$
(By an affine function, we mean a function of type $\ell(X) = cX + d$
with $c, d \in \R$.)
}
\end{remark}

\begin{remark}\label{rem:add_int}
\rm{Cauchy index is additive  on intervals: 
given any $a, c_1, \dots, c_k, b \in \R$
and $P, Q \in \R[X]$, 
$$
\Ind_a^b(Q,P) = 
\Ind_a^{c_1}(Q,P) + \sum_{1 \le i \le k-1} 
\Ind_{c_i}^{c_{i+1}}(Q,P)
+ \Ind_{c_k}^{b}(Q,P).
$$}
\end{remark}

\subsection{Winding number}\label{subsec:winding}

From now on, we consider the usual identification $\C \sim \R^2$. 

The \emph{winding number} of a closed curve in $\C$
is a classical object which counts, 
by means of an 
analytic expression, 
the number of 
counterclockwise turns of the curve around the origin.  
In this paper, we will always restrict to curves which are 
the image of a polynomial function $F \in \C[X, Y]$ on the border of a rectangle $\Gamma \subset \R^2$
whose sides are parallel to the axis. 
For curves of this type, we recall the algebraic definition of 
\emph{winding number} following \cite{Eis}.
Note that the border of $\Gamma$, denoted by $\partial \Gamma$, is simply the union of four segments.

\begin{notn} 
For $F \in \C[X, Y]$, we denote
$F_{\rm re}$ and  $F_{\rm im}$ the real and imaginary parts of $F$, i.e.
the unique
polynomials in $\R[X, Y]$ such that the identity 
$$
F(X, Y) =
F_{\rm re}(X,Y)+iF_{\rm im}(X,Y)
$$
in $\C[X, Y]$ holds.
\end{notn}

\begin{defn} \label{def:winding_number}
Let $x_0, x_1, y_0, y_1 \in \R$ with $x_0 < x_1$ and $y_0 < y_1$
and let $\Gamma \subset \R^2$ be the rectangle 
$\Gamma := [x_0, x_1] \times [y_0, y_1]$.
For $F \in \C[X,Y]$ the \emph{winding number} of $F$ on $\partial \Gamma$
is defined as
$$\begin{array}{rcl}
w(F \, | \, \partial \Gamma) &:= & \frac12 \left(
\Ind_{x_0}^{x_1}(F_{\rm re}(X, y_0),F_{\rm im}(X, y_0))
+
\Ind_{y_0}^{y_1}
(F_{\rm re}(x_1,Y),F_{\rm im}(x_1,Y))
\right.\\[3mm]
&&+
\left.
\Ind_{x_1}^{x_0}
(F_{\rm  re}(X, y_1),F_{\rm im}(X, y_1))
+
\Ind_{y_1}^{y_0}
(F_{\rm re}(x_0,Y),F_{\rm im}(x_0,Y))
\right).
\end{array}
$$
\end{defn}

Notice that 
it follows from
the definition of winding number that 
we are going through $\partial \Gamma$ following the counterclock sense. 
The idea behind this algebraic definition is 
to count one half of a turn each time this curve crosses the $X$-axis from quadrant IV to I or from 
quadrant II to III, and minus one half of a turn each time it crosses the $X$-axis from quadrant I 
to IV or from 
quadrant III to II. 
Since these crossings coincide with jumps of the rational function $\displaystyle{\frac{F_{\rm re}}{F_{\rm im}}}$ from $-\infty$ to $+\infty$
and from $+\infty$ to $-\infty$ respectively, the Cauchy index is an appropriate algebraic tool to
count the number of turns counterclockwise, which is (when  $F$ does not vanish on $\partial \Gamma$)
the classical definition of the winding number.

\begin{center}
\begin{tikzpicture}
      \draw[-] (-4,0) -- (4.5,0);
      \draw[-] (0,-2.3) -- (0,2.5);
      \node at (4,2) {I};
      \node at (-3.5,2) {II};
      \node at (-3.5,-2) {III};
      \node at (4,-2) {IV}; 
      \draw[line width=0.8pt, domain=-0.5:1.04, smooth,variable=\x] plot ({3*sin(100*\x) +0.5},{2.5*cos(100*\x)-0.25});	
      \draw[{Triangle[scale=1.8]}-, domain=0.6:0.62,smooth,variable=\x] plot ({3*sin(100*\x) +0.5},{2.5*cos(100*\x)-0.25});
      \draw[{Triangle[scale=1.8]}-, domain=-0.3:-0.28,smooth,variable=\x] plot ({3*sin(100*\x) +0.5},{2.5*cos(100*\x)-0.25});
      \draw[line width=0.8pt, domain=-1.32:-0.5,smooth,variable=\x] plot ({-4.6-3*sin(100*\x)+0.5},{2.5*cos(100*\x)-0.25});
      \draw[{Triangle[scale=1.8]}-, domain=-0.7:-0.62,smooth,variable=\x] plot ({-4.6-3*sin(100*\x)+0.5},{2.5*cos(100*\x)-0.25});
      \draw[{Triangle[scale=1.8]}-, domain=-1.1:-1.05,smooth,variable=\x] plot ({-4.6-3*sin(100*\x)+0.5},{2.5*cos(100*\x)-0.25});
      \draw[line width=0.8pt, domain=-1.63:1.43,smooth,variable=\x] plot ({\x^3 - 1.5*\x},{1-1.2*(\x*\x)+0.22});
      \draw[-{Triangle[scale=1.8]}, domain=-1.43:-1.4,smooth,variable=\x] plot ({\x^3 - 1.5*\x},{1-1.2*(\x*\x)+0.22});
      \draw[-{Triangle[scale=1.8]}, domain=0.7:0.72,smooth,variable=\x] plot ({\x^3 - 1.5*\x},{1-1.2*(\x*\x)+0.22});
      \draw[line width=0.8pt, domain=0.77:3.4,smooth,variable=\x] plot ({\x},{0.6*(\x-0.85)*(\x-3.3)*(\x-3.3)-0.9});
      \draw[-{Triangle[scale=1.8]}, domain=0.97:1,smooth,variable=\x] plot ({\x},{0.6*(\x-0.85)*(\x-3.3)*(\x-3.3)-0.9});
      \draw[-{Triangle[scale=1.8]}, domain=2.7:2.75,smooth,variable=\x] plot ({\x},{0.6*(\x-0.85)*(\x-3.3)*(\x-3.3)-0.9});
      \fill (3.38, -0.9) circle[radius=2pt];
      \fill (-1.8, 1.35) circle[radius=2pt];
      \fill (-1.85, -1.92) circle[radius=2pt];
      \fill (0.78, -1.2) circle[radius=2pt];
      \draw (3.47, 0) circle[radius=4pt];
      \draw (2.3, 0) circle[radius=4pt];
      \draw (1.2, 0) circle[radius=4pt];
      \draw (0.47, 0) circle[radius=4pt];
      \draw (-0.47, 0) circle[radius=4pt];
      \draw (-1.12, 0) circle[radius=4pt];
      \node at (0,-2.8) {$w(F \, | \, \partial \Gamma) = 2$}; 
\end{tikzpicture}
\end{center}

Along the paper we will follow the convention of 
using $X, Y$ and $T$ 
for 
real variables, i.e. variables
that will only be eventually evaluated 
at elements of $\R$, 
and $Z$ 
for a 
complex variable, i.e. a variable
that will be eventually evaluated 
at arbitrary elements of $\C$.

To $F \in \C[Z]$
we associate $\bar F(X,Y) := F(X + iY) \in \C[X,Y]$. 
Abusing slightly notation, we denote 
$F_{\rm re}, F_{\rm im},w(F \, | \, \partial \Gamma)$ for $\bar F_{\rm re}, \bar F_{\rm im},w(\bar F \, | \, \partial \Gamma)$.

\begin{example}\label{exm:wind_linear} (See \cite[Proposition 4.4]{Eis}) Let 
$\Gamma := [x_0, x_1] \times [y_0, y_1] \subset \R^2$.
For $z \in \C$, we have
$$
w(Z-z \, | \, \partial \Gamma) =
\left\{
\begin{array}{cl}
1 & \hbox{if } z \hbox{ is in the interior of }  \Gamma, \cr        
1/2 & \hbox{if } z \hbox{ is in one of the edges of }  \Gamma, \cr        
1/4 & \hbox{if } z \hbox{ is a vertex of }  \Gamma, \cr        
0 & \hbox{if } z \hbox{ is in the exterior of } \Gamma. \cr        
\end{array}
\right.
$$
\end{example}

\begin{lemma}\label{lem:grid_partition}
Let $\Gamma := [x_0, x_1] \times [y_0, y_1] \subset \R^2$ and consider a grid 
partition 
of $\Gamma$ into a finite number of rectangles $\Gamma_1, \dots, \Gamma_s$. 
For $F \in \C[X, Y]$, we have
$$
w(F \, | \, \partial \Gamma) = \sum_{1 \le i \le s}w(F \, | \, \partial \Gamma_i).
$$
\end{lemma}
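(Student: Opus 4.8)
The plan is to reduce the statement to two elementary properties of the Cauchy index already at our disposal: additivity on intervals (Remark~\ref{rem:add_int}) and the orientation-reversal rule $\Ind_a^b(Q,P) = -\Ind_b^a(Q,P)$ built into the definitions. First I would fix notation for the grid partition: it is given by real numbers $x_0 = \xi_0 < \xi_1 < \dots < \xi_m = x_1$ and $y_0 = \eta_0 < \eta_1 < \dots < \eta_n = y_1$, and the rectangles are exactly $\Gamma_{j,k} := [\xi_{j-1}, \xi_j] \times [\eta_{k-1}, \eta_k]$ for $1 \le j \le m$ and $1 \le k \le n$.

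Next I would expand $\sum_{j,k} w(F \,|\, \partial \Gamma_{j,k})$ using Definition~\ref{def:winding_number}: it is $\tfrac12$ times a sum of $4mn$ Cauchy indices, four for each rectangle $\Gamma_{j,k}$, one per edge, traversed counterclockwise. I would then group these Cauchy indices according to the segment on which they are computed. A horizontal interior segment $[\xi_{j-1},\xi_j]\times\{\eta_k\}$ with $1 \le k \le n-1$ appears exactly twice: as the top edge of $\Gamma_{j,k}$, contributing $\Ind_{\xi_j}^{\xi_{j-1}}(F_{\rm re}(X,\eta_k),F_{\rm im}(X,\eta_k))$, and as the bottom edge of $\Gamma_{j,k+1}$, contributing $\Ind_{\xi_{j-1}}^{\xi_j}(F_{\rm re}(X,\eta_k),F_{\rm im}(X,\eta_k))$; the pair of polynomials is literally the same in both, so the two contributions are opposite and cancel. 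The same applies to every vertical interior segment $\{\xi_j\}\times[\eta_{k-1},\eta_k]$ with $1 \le j \le m-1$, which is the right edge of $\Gamma_{j,k}$ and the left edge of $\Gamma_{j+1,k}$.

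What survives are the Cauchy indices attached to segments lying on $\partial\Gamma$. On the bottom side $y=y_0$ these add up to $\sum_{j=1}^m \Ind_{\xi_{j-1}}^{\xi_j}(F_{\rm re}(X,y_0),F_{\rm im}(X,y_0))$, which equals $\Ind_{x_0}^{x_1}(F_{\rm re}(X,y_0),F_{\rm im}(X,y_0))$ by Remark~\ref{rem:add_int}; the right, top and left sides are handled the same way, the orientations of the surviving indices matching exactly the counterclockwise traversal prescribed in Definition~\ref{def:winding_number} (on the top side one turns $\sum_j \Ind_{\xi_j}^{\xi_{j-1}}$ into $\Ind_{x_1}^{x_0}$ via the orientation-reversal rule, and similarly on the left side). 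Hence $\sum_{j,k} w(F\,|\,\partial\Gamma_{j,k})$ equals $\tfrac12$ times the sum of the four Cauchy indices appearing in Definition~\ref{def:winding_number}, that is, $w(F\,|\,\partial\Gamma)$, as claimed.

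I do not expect a genuine obstacle; the only points that need care are the orientation bookkeeping on interior versus boundary edges, and the remark that the cancellation on an interior edge is valid with no hypothesis on the vanishing of $F_{\rm re}$ or $F_{\rm im}$ at grid points, since the two rectangles meeting along that edge use the very same pair of polynomials there, and the conventions of Remark~\ref{rem:add_int} already cover the degenerate cases. An alternative, essentially equivalent, organization would first establish the identity when $\Gamma$ is cut by a single horizontal or vertical line into two rectangles, and then iterate; I would mention this but carry out the direct computation above.
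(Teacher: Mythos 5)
Your proof is correct and follows the same route as the paper: expand each $w(F\,|\,\partial\Gamma_i)$ by Definition \ref{def:winding_number}, cancel the two opposite-orientation Cauchy indices of the same pair of polynomials along every interior edge, and reassemble the boundary contributions with the additivity of the Cauchy index on intervals (Remark \ref{rem:add_int}). Your version merely spells out the grid coordinates and the orientation bookkeeping that the paper leaves implicit.
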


\begin{proof}{Proof:}
After replacing the winding number of $F$ on $\partial\Gamma_1, \dots, \partial\Gamma_s$
by its 
definition, 
along each edge in the interior of $\Gamma$   we have to 
add and subtract the Cauchy index of the 
same 
pair of polynomials, which adds up to zero. On the other hand, using the additivity 
of Cauchy index on intervals (Remark \ref{rem:add_int}), adding on the 
remaining edges we obtain the winding number of $F$ on $\partial\Gamma$.
\begin{center}
\begin{tikzpicture}
      \draw[line width=0.9pt,-] (-3,-1.8) -- (-3,1.8);
      \draw[line width=0.8pt,-] (-1.5,-1.8) -- (-1.5,1.8);
      \draw[line width=0.8pt,-] (0.3,-1.8) -- (0.3,1.8);
      \draw[line width=0.8pt,-] (1.3,-1.8) -- (1.3,1.8);
      \draw[line width=0.8pt,-] (3,-1.8) -- (3,1.8);
      \draw[<-, dashed] (-2.8,-1.5) -- (-2.8,-0.7);
      \draw[<-, dashed] (-2.8,-0.1) -- (-2.8,0.3);	
      \draw[<-, dashed] (-2.8,0.9) -- (-2.8,1.5);
      \draw[->, dashed] (-1.7,-1.5) -- (-1.7,-0.7);
      \draw[->, dashed] (-1.7,-0.1) -- (-1.7,0.3);	
      \draw[->, dashed] (-1.7,0.9) -- (-1.7,1.5);
      \draw[<-, dashed] (-1.3,-1.5) -- (-1.3,-0.7);
      \draw[<-, dashed] (-1.3,-0.1) -- (-1.3,0.3);	
      \draw[<-, dashed] (-1.3,0.9) -- (-1.3,1.5);
      \draw[->, dashed] (0.1,-1.5) -- (0.1,-0.7);
      \draw[->, dashed] (0.1,-0.1) -- (0.1,0.3);	
      \draw[->, dashed] (0.1,0.9) -- (0.1,1.5);
      \draw[<-, dashed] (0.5,-1.5) -- (0.5,-0.7);
      \draw[<-, dashed] (0.5,-0.1) -- (0.5,0.3);	
      \draw[<-, dashed] (0.5,0.9) -- (0.5,1.5);
      \draw[->, dashed] (1.1,-1.5) -- (1.1,-0.7);
      \draw[->, dashed] (1.1,-0.1) -- (1.1,0.3);	
      \draw[->, dashed] (1.1,0.9) -- (1.1,1.5);
      \draw[<-, dashed] (1.5,-1.5) -- (1.5,-0.7);
      \draw[<-, dashed] (1.5,-0.1) -- (1.5,0.3);	
      \draw[<-, dashed] (1.5,0.9) -- (1.5,1.5);
      \draw[->, dashed] (2.8,-1.5) -- (2.8,-0.7);
      \draw[->, dashed] (2.8,-0.1) -- (2.8,0.3);	
      \draw[->, dashed] (2.8,0.9) -- (2.8,1.5);
      \draw[line width=0.8pt,-] (-3,-1.8) -- (3,-1.8);
      \draw[line width=0.8pt,-] (-3, -0.4) -- (3,-0.4);
      \draw[line width=0.8pt,-] (-3, 0.6) -- (3,0.6);
      \draw[line width=0.8pt,-] (-3, 1.8) -- (3,1.8);
      \draw[<-, dashed] (-2.7,1.6) -- (-1.8,1.6);	
      \draw[<-, dashed] (-1.2,1.6) -- (0,1.6);
      \draw[<-, dashed] (0.6,1.6) -- (1,1.6);	
      \draw[<-, dashed] (1.6,1.6) -- (2.7,1.6);
      \draw[->, dashed] (-2.7,0.8) -- (-1.8,0.8);	
      \draw[->, dashed] (-1.2,0.8) -- (0,0.8);
      \draw[->, dashed] (0.6,0.8) -- (1,0.8);	
      \draw[->, dashed] (1.6,0.8) -- (2.7,0.8);
      \draw[<-, dashed] (-2.7,0.4) -- (-1.8,0.4);	
      \draw[<-, dashed] (-1.2,0.4) -- (0,0.4);
      \draw[<-, dashed] (0.6,0.4) -- (1,0.4);	
      \draw[<-, dashed] (1.6,0.4) -- (2.7,0.4);
      \draw[->, dashed] (-2.7,-0.2) -- (-1.8,-0.2);	
      \draw[->, dashed] (-1.2,-0.2) -- (0,-0.2);
      \draw[->, dashed] (0.6,-0.2) -- (1,-0.2);	
      \draw[->, dashed] (1.6,-0.2) -- (2.7,-0.2);
      \draw[<-, dashed] (-2.7,-0.6) -- (-1.8,-0.6);	
      \draw[<-, dashed] (-1.2,-0.6) -- (0,-0.6);
      \draw[<-, dashed] (0.6,-0.6) -- (1,-0.6);	
      \draw[<-, dashed] (1.6,-0.6) -- (2.7,-0.6);
      \draw[->, dashed] (-2.7,-1.6) -- (-1.8,-1.6);	
      \draw[->, dashed] (-1.2,-1.6) -- (0,-1.6);
      \draw[->, dashed] (0.6,-1.6) -- (1,-1.6);	
      \draw[->, dashed] (1.6,-1.6) -- (2.7,-1.6);
      \node at (-2.15,1.15) {$\Gamma_1$};
      \node at (2.2,-1.15) {$\Gamma_s$};
      \node at (3.4,-1.5) {$\Gamma$};
\end{tikzpicture}
\end{center}
\end{proof}

To finish this subsection, we prove the following lemma which will play an important role at the end of Section \ref{sc:main} when
applying homotopy tools.

\begin{lemma}\label{lem:homot_cube}
Let $x_0, x_1, y_0, y_1, t_0, t_1 \in \R$ with $x_0 < x_1, y_0 < y_1$ and $t_0 < t_1$.
Let $\Gamma_T := [x_0, x_1] \times [y_0, y_1], 
\Gamma_Y := [x_0, x_1] \times [t_0, t_1], \Gamma_X := [y_0, y_1] \times [t_0, t_1]  
\subset \R^2$.
For $F \in \C[X, Y, T]$, we have
$$
\begin{array}{cccccccr}
-&w(F(X, Y, t_0) \, | \, \partial \Gamma_T) 
&+&
w(F(X, y_0, T) \, | \, \partial \Gamma_Y)
&-&
w(F(x_0, Y, T) \, | \, \partial \Gamma_X) & \\[2mm]
+&  
w(F(X, Y, t_1) \, | \, \partial \Gamma_T)  
&-& 
w(F(X, y_1, T) \, | \, \partial \Gamma_Y)  
&+&  
w(F(x_1, Y, T) \, | \, \partial \Gamma_X) & = & 0. 
\end{array}
$$
Therefore, if 
$$
w(F(X, y_0, T) \, | \, \partial \Gamma_Y)
= 
w(F(x_0, Y, T) \, | \, \partial \Gamma_X) 
=  
w(F(X, y_1, T) \, | \, \partial \Gamma_Y)  
=
w(F(x_1, Y, T) \, | \, \partial \Gamma_X) 
= 0$$
then
$$
w(F(X, Y, t_0) \, | \, \partial \Gamma_T)  
=
w(F(X, Y, t_1) \, | \, \partial \Gamma_T).
$$
\end{lemma}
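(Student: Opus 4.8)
The plan is to view the six winding numbers appearing in the first identity as the contributions of the six faces of the box $[x_0,x_1]\times[y_0,y_1]\times[t_0,t_1]\subset\R^3$, and to show that the total ``oriented flux through the boundary of the box'' vanishes by a cancellation over the twelve edges, exactly in the spirit of the proof of Lemma \ref{lem:grid_partition}. First I would expand each $w(F(\cdot,\cdot,\cdot)\,|\,\partial\Gamma_\bullet)$ using Definition \ref{def:winding_number}, so that the left-hand side becomes $\tfrac12$ times a sum of $24$ Cauchy indices, one for each (oriented) edge of the box, each of the form $\Ind_{u}^{v}\big(G_{\rm re},G_{\rm im}\big)$ where $G$ is $F$ restricted to the appropriate edge (two of the three variables frozen at endpoints, the third ranging over an interval), and $G_{\rm re},G_{\rm im}$ are its real and imaginary parts. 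The signs in the statement ($-,+,-,+,-,+$ on the six faces) are precisely chosen so that this corresponds to orienting each face as the boundary of the solid box; I would check that with these signs, each of the twelve edges of the box receives its Cauchy index once with a $+$ and once with a $-$ (coming from the two faces adjacent to that edge), so all $24$ terms cancel in pairs and the sum is $0$.

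The key steps in order: (1) substitute the definition of the winding number into each of the six terms; (2) bookkeep the resulting Cauchy indices by edge — there are $12$ edges, and an edge on which, say, $Y$ varies and $X=x_i$, $T=t_j$ are frozen contributes $\Ind_{y_0}^{y_1}(F_{\rm re}(x_i,Y,t_j),F_{\rm im}(x_i,Y,t_j))$ or its negative $\Ind_{y_1}^{y_0}(\cdots)$ depending on which face it is seen from, using the antisymmetry $\Ind_a^b=-\Ind_b^a$; (3) verify that the chosen face-signs make the two occurrences of each edge opposite, hence the full alternating sum over the $24$ terms is $0$, which gives the first displayed identity after dividing by $2$; (4) deduce the second assertion: if the four ``lateral'' winding numbers (the $Y$- and $X$-faces, i.e.\ the ones whose defining rectangle involves $T$) all vanish, the identity collapses to $-w(F(X,Y,t_0)\,|\,\partial\Gamma_T)+w(F(X,Y,t_1)\,|\,\partial\Gamma_T)=0$, which is the claimed equality.

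The only real bookkeeping hazard — and what I expect to be the main (though elementary) obstacle — is getting the orientations and signs consistent across the three coordinate directions: the rectangles $\Gamma_T,\Gamma_Y,\Gamma_X$ are defined with a fixed ordering of their two axes (not cyclically consistent with the box), so when one writes out $\partial\Gamma_Y=[x_0,x_1]\times[t_0,t_1]$ counterclockwise versus $\partial\Gamma_T=[x_0,x_1]\times[y_0,y_1]$ counterclockwise, the induced orientations on a shared edge must be compared carefully, and this is exactly what forces the $-,+,-$ pattern on alternating faces. I would therefore do step (3) explicitly for one representative edge of each of the three types (an edge parallel to $X$, one parallel to $Y$, one parallel to $T$), and then invoke symmetry of the argument for the remaining edges. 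No property of $F$ beyond being a polynomial is used; in particular $F$ need not be nonvanishing anywhere, since Cauchy indices and hence winding numbers are defined for all polynomial pairs.
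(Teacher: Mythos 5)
Your proposal is correct and is essentially the paper's own argument: the paper also expands the six winding numbers via Definition \ref{def:winding_number} and observes that along each of the twelve edges of the box $[x_0,x_1]\times[y_0,y_1]\times[t_0,t_1]$ the same Cauchy index is added once and subtracted once, giving $0$, with the second assertion following immediately. Your write-up is in fact more detailed about the edge-by-edge sign bookkeeping than the paper, which only sketches the cancellation with a figure.
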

\begin{proof}{Proof:}
Consider the rectangular parallelepiped $[x_0, x_1] \times [y_0, y_1] \times [t_0, t_1] \subset \R^3$. 
After replacing each winding number by its definition, 
along each edge of this parallelepiped we have to add and subtract the Cauchy index of the 
same pair of polynomials; therefore obtaining $0$ as the final result. 

\begin{center}
\begin{tikzpicture}
      \draw[line width=0.9pt,-] (-1.5,-1) -- (-1.5,1);
      \draw[line width=0.8pt,-] (1.5,-1) -- (1.5,1);
      \draw[line width=0.8pt,-] (2.7, -0.2) -- (2.7,1.8);
      \draw[line width=0.8pt,-] (-1.5,-1) -- (1.5,-1);
      \draw[line width=0.8pt,-] (-1.5, 1) -- (1.5,1);
      \draw[line width=0.8pt,-] (-0.3,1.8) -- (2.7,1.8);
      \draw[line width=0.8pt,-] (1.5,-1) -- (2.7,-0.2);
      \draw[line width=0.8pt,-] (1.5,1) -- (2.7,1.8);
      \draw[line width=0.8pt,-] (-1.5,1) -- (-0.3,1.8);
      \draw[<-, dashed] (-1.3,-0.7) -- (-1.3,0.7);
      \draw[->, dashed] (1.3,-0.7) -- (1.3,0.7);
      \draw[<-, dashed] (1.7,-0.5) -- (1.7,0.9);
      \draw[->, dashed] (2.55, 0) -- (2.55,1.4);
      \draw[->, dashed] (-1.2,-0.8) -- (1.2,-0.8);	
      \draw[<-, dashed] (-1.2,0.8) -- (1.2,0.8);
      \draw[->, dashed] (-0.75,1.15) -- (1.3,1.15);	
      \draw[<-, dashed] (-0.1,1.65) -- (1.9,1.65);
      \draw[->, dashed] (1.75,-0.60) -- (2.5,-0.1);
      \draw[<-, dashed] (1.75,0.93) -- (2.5,1.42);
      \draw[->, dashed] (1.4,1.18) -- (2.05,1.6);
      \draw[<-, dashed] (-0.95,1.18) -- (-0.3,1.6);
      \node at (-1.5,-1.5) {$x_0$};
      \node at (1.5,-1.5) {$x_1$};
      \node at (2.3,-1) {$y_0$};
      \node at (3.5,-0.2) {$y_1$};
      \node at (-2,-1) {$t_0$};
      \node at (-2,1) {$t_1$};      
\end{tikzpicture}
\end{center}
\end{proof}

\subsection{The intermediate value property for polynomials of bounded degree}\label{subsec:boundedIVT}

Our main goal in this paper is to prove that $\IVT_{d^2}$ implies $\FTA_d$.
So, from now, 
we take a fixed value of $d \in \Z_{\ge 1}$ and we suppose that $(\R, \le)$ 
is an ordered field satisfying $\IVT_{d^2}$ but not necessarily $\IVT$.
Since $\FTA_1$ holds even under no assumptions on $(\R, \le)$, 
we suppose $d\ge 2$.

Note that the current assumption on $(\R, \le)$ is rather 
subtle,
since 
for instance, it is only for $P \in \R[X]$ with $\deg P \le d^2$ that we 
can claim that if $P$ has no roots
on an interval $I \subset \R$, then $P$ has constant sign (different from $0$) on $I$.

The purpose of this section is to 
reexamine
some results from \cite{Eis} 
concerning the Cauchy index and the winding number
as well as to prove
that they still
hold in the present setting, despite the fact that our 
hypotheses are
weaker than 
in \cite{Eis}. 
More explicitly,  in \cite{Eis} the assumption is that 
$\R$ is a real closed field, and therefore it satisfies $\IVT$, 
whereas we only suppose $\IVT_{d^2}$.
Nevertheless, 
in the results reviewed in this section, following the proofs in \cite{Eis} or a
slight variation of it, it turns out that the Intermediate Value Theorem is applied
to polynomials of degree less than or equal to $d^2$, and this is enough to ensure that
these results
are still valid. For completeness, we include anyway full proofs of the statements in this section. 
We will use many times the following easy remark. 

\begin{remark}\label{rem:sign_easy}
\rm{For $x, y \in \{-1, 0, 1\}$ with $(x, y) \ne (0, 0)$}, 
$$
\sign(xy) = 1 - |x - y|. 
$$
\end{remark}

We introduce the following useful notation.

\begin{notn}\label{notn:sign_var}
Let $x \in \R$ and $P, Q \in \R[X]$,
we denote the sign variation of $(P, Q)$ at $x$ by
$$
{\rm Var}_x(P,Q):= \frac12  \Big|\sign(P(x)) - \sign(Q(x)) \Big|.
$$
For $a, b\in \R$,
we denote
by ${\rm Var}_a^b(P, Q)$
the sign variation of $(P,Q)$ at $a$ minus
the sign variation of $(P, Q)$ at $b$;  namely, 
$$
{\rm Var}_a^b(P, Q) :=
{\rm Var}_a(P, Q) -{\rm Var}_b(P, Q).
$$
\end{notn}

We first prove the following property, which is a refinement of the well known \emph{the inversion formula}.

\begin{proposition}\label{prop:inv_formula}
Let $a, b \in \R$ and $P, Q \in \R[X]$ with
$\deg P, \deg Q \le d^2$ and
such that $P$ and $Q$ 
have no common root in $[a, b]$.
Then 
$$
\Ind_a^b(Q,P) + 
\Ind_a^b(P,Q)
= 
{\rm Var}_a^b(P, Q)
$$
\end{proposition}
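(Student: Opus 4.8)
The plan is to reduce the statement to a purely local, pointwise identity and then sum. First I would observe that since $P$ and $Q$ have no common root in $[a,b]$ and both have degree at most $d^2$, the hypothesis $\IVT_{d^2}$ guarantees that each of $P$, $Q$ has only finitely many roots in $[a,b]$ and has constant nonzero sign between consecutive roots; in particular I can pick points $a = c_0 < c_1 < \dots < c_k = b$ such that each subinterval contains at most one root of $PQ$ in its interior and the endpoints $c_i$ can be chosen to be non-roots of $PQ$ (if $a$ or $b$ happens to be a root of exactly one of $P,Q$, that case is handled by the half-integer contributions of $\Ind_a^+$ and $\Ind_b^-$, as in the definition). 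By the additivity of the Cauchy index on intervals (Remark~\ref{rem:add_int}) and the obvious additivity of $\mathrm{Var}$ (the sum telescopes), it suffices to prove the identity on each such small interval, i.e. when the only possible root of $PQ$ in $[a,b]$ is a single interior point $x$.

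Next I would do the local analysis at such a point $x$. Write $P = (X-x)^{\mu}\widetilde P$, $Q = (X-x)^{\nu}\widetilde Q$ with $\widetilde P(x),\widetilde Q(x)\neq 0$; since $P,Q$ have no common root, $\min(\mu,\nu) = 0$. The key case is $\mu > \nu = 0$ (the case $\nu > \mu = 0$ is symmetric, and $\mu=\nu=0$ gives zero on both sides after a short sign-bookkeeping check). From Definition~\ref{defn:CI_at_a_point},
$$
\Ind_x(Q,P) = \tfrac12\big(1 - (-1)^{\mu}\big)\,\sign\!\big(\widetilde Q(x)\widetilde P(x)\big),
$$
while $\Ind_x(P,Q) = 0$ since $\mu > \nu$ means the roles are reversed and the "otherwise" clause applies. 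Meanwhile $\mathrm{Var}_a^b(P,Q) = \mathrm{Var}_a(P,Q) - \mathrm{Var}_b(P,Q)$ measures whether the sign of $P/Q$ (equivalently of $PQ$, since $Q$ has constant sign here) changes across $x$: the sign of $P$ on the left of $x$ is $(-1)^{\mu}\sign(\widetilde P(x))$ and on the right is $\sign(\widetilde P(x))$, while $\sign Q$ is constant $=\sign(\widetilde Q(x))$. Using Remark~\ref{rem:sign_easy} to convert products of signs into differences of signs, a direct computation shows $\mathrm{Var}_a^b(P,Q)$ equals exactly $\tfrac12(1-(-1)^{\mu})\sign(\widetilde Q(x)\widetilde P(x))$, matching $\Ind_a^b(Q,P) + \Ind_a^b(P,Q)$. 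One must also check the contribution of $\Ind_a^+$ and $\Ind_b^-$ in the definition of $\Ind_a^b$ against the values $\mathrm{Var}_a$, $\mathrm{Var}_b$ at the subinterval endpoints; since those endpoints were chosen to be non-roots of $PQ$, these boundary terms vanish on each interior subinterval and only appear once, at the genuine endpoints $a$ and $b$, where the same local computation (now with $x = a$ or $x = b$, using half-integer Cauchy indices) again matches $\mathrm{Var}_a$ resp. $-\mathrm{Var}_b$.

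The main obstacle I anticipate is not conceptual but combinatorial bookkeeping: getting the $\pm\tfrac12$ conventions at the interval endpoints (the $\Ind_a^+$ / $\Ind_b^-$ terms) to line up with $\mathrm{Var}_a$ / $\mathrm{Var}_b$ in every parity and sign case, and making sure that the degree bound $d^2$ is invoked precisely where "no root on an interval $\Rightarrow$ constant sign" is used — namely to justify that $\sign P$ is well-defined and locally constant away from the finitely many roots. Everything else (additivity, the telescoping of $\mathrm{Var}$, the symmetry $P\leftrightarrow Q$) is routine. I would organize the write-up so that the reduction to a single interior point is stated first, then a short lemma-free case analysis ($\mu>\nu$, $\nu>\mu$, $\mu=\nu=0$) using Remark~\ref{rem:sign_easy} handles the core identity.
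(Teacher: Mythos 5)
Your proposal is correct and follows essentially the same route as the paper: split $[a,b]$ at the roots, use additivity of the Cauchy index and the telescoping of ${\rm Var}$, and match the two sides by a local sign computation via Remark \ref{rem:sign_easy}, with $\IVT_{d^2}$ supplying constant sign on root-free subintervals. The only difference is organizational: the paper uses the affine invariance of Remark \ref{rem:lin_resc} to move the single root to the left endpoint $a$ and does one computation there, whereas you treat interior roots directly and check the endpoints separately (and you should also note the trivial case $P=0$ or $Q=0$), which amounts to the same bookkeeping.
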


\begin{proof}{Proof:} We follow the arguments from \cite[Theorem 3.9]{Eis}. If $P = 0$ or $Q = 0$, 
since $P$ and $Q$ have no common root in $[a, b]$ we have that 
${\rm Var}_a^b(P, Q) = 0$ and the result holds. Suppose now that $P \ne 0$ and $Q \ne 0$. 
From the invariance by 
affine change of variables and the additivity on intervals of Cauchy index (Remarks \ref{rem:lin_resc}
and \ref{rem:add_int}) we suppose that $a$ is the only possible root of 
$P$ or $Q$ on $[a,b]$ and that $a$ is indeed a root of $P$. 

Write $$
P = (X - a)^{\mu(a)}  \widetilde P,
$$
with $\mu(a) \in \Z_{> 0}$ and  $\widetilde P(a) \ne 0$. 
Then using Remark \ref{rem:sign_easy} we have
$$
\begin{array}{rcl}
 \Ind_a^b(Q,P) + \Ind_a^b(P,Q) &
= &\frac12 \sign(Q(a)\widetilde P(a))  + 0\\[3mm]
&= &\frac12 - \frac12 \Big|\sign(\widetilde P(a)) - \sign(Q(a)) \Big|\\[3mm]
&= &\frac12 - \frac12 \Big|\sign(P(b)) - \sign(Q(b)) \Big|\\[3mm]
&= &\frac12 - {\rm Var}_b(P, Q)\\[3mm]
&= &{\rm Var}_a(P, Q) - {\rm Var}_b(P, Q)\\[3mm]
&= & {\rm Var}_a^b(P, Q).
\end{array}
$$
\end{proof}

Next proposition shows the additivity of the winding number with respect to the product of complex polynomials.

\begin{proposition}\label{prop:mult_wind} Let 
$\Gamma := [x_0, x_1] \times [y_0, y_1] \subset \R^2$ and
$F, G \in \C[X,Y]$ with $\deg FG \le d^2$ 
and such that $F$ and $G$ do not vanish in $\partial \Gamma$. Then
$$
w(FG \, | \, \partial \Gamma) = w(F \, | \, \partial \Gamma) + w(G \, | \, \partial \Gamma).
$$
\end{proposition}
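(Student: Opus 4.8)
The plan is to reduce the multiplicativity of the winding number to the multiplicativity of the Cauchy index along each of the four edges of $\partial\Gamma$, and for that to the additivity of the Cauchy index with respect to products of the associated rational functions. The point is that, writing $H = FG$, the real and imaginary parts satisfy $H_{\rm re} = F_{\rm re}G_{\rm re} - F_{\rm im}G_{\rm im}$ and $H_{\rm im} = F_{\rm re}G_{\rm im} + F_{\rm im}G_{\rm re}$; on any edge this reduces to one-variable polynomials, and the rational function $\frac{H_{\rm re}}{H_{\rm im}}$ restricted to the edge is obtained from $\frac{F_{\rm re}}{F_{\rm im}}$ and $\frac{G_{\rm re}}{G_{\rm im}}$ by the addition formula for the "cotangent", i.e. if $u = F_{\rm re}/F_{\rm im}$ and $v = G_{\rm re}/G_{\rm im}$ then $H_{\rm re}/H_{\rm im} = \frac{uv-1}{u+v}$. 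So the core is a purely one-variable statement: for $P_1, Q_1, P_2, Q_2 \in \R[X]$ of suitably bounded degree with no relevant common roots in $[a,b]$, one has
$$
\Ind_a^b\big(P_1P_2 - Q_1Q_2,\ P_1Q_2 + P_2Q_1\big)
= \Ind_a^b(Q_1, P_1) + \Ind_a^b(Q_2, P_2).
$$
This is the analogue of \cite[Theorem 4.x / Proposition on products]{Eis}, and I would first isolate and prove it.

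For that one-variable identity, the natural route is to use the inversion formula (Proposition \ref{prop:inv_formula}) to trade Cauchy indices for sign variations, which behave well under products. Concretely: apply Proposition \ref{prop:inv_formula} to each of the three pairs appearing (once to $(Q_1,P_1)$, once to $(Q_2,P_2)$, and once to the combined pair), reducing everything to an identity about $\mathrm{Var}_a^b$ of products of sign functions; then verify that identity pointwise at $a$ and at $b$ using the elementary sign-of-product computation in Remark \ref{rem:sign_easy}, distinguishing a few cases according to whether $F$ or $G$ vanishes — but by hypothesis $F$ and $G$ do not vanish on $\partial\Gamma$, so on each edge at the endpoints only finitely many sign patterns occur and the case analysis is finite. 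One must also add in the intermediate-point contributions $\sum_{x\in(a,b)}\mathrm{Ind}_x$, where the key observation is that a jump of $\frac{H_{\rm re}}{H_{\rm im}}$ at an interior point $x$ comes from a jump of $u$ or of $v$ at $x$ (not both, since $F,G$ don't vanish there and a common pole would force $F_{\rm im}(x)=G_{\rm im}(x)=0$ combined with the vanishing of the numerator), and the local index adds accordingly; this is where one checks the signs of the "cotangent addition" contribute correctly.

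The degree bookkeeping is what makes the bounded hypothesis $\deg FG \le d^2$ usable: the polynomials $P_1P_2 - Q_1Q_2$ and $P_1Q_2 + P_2Q_1$ obtained from the edge restrictions of $F$ and $G$ have degree at most $\deg(FG)\big|_{\text{edge}} \le d^2$, so Proposition \ref{prop:inv_formula} applies to all three pairs, and we never invoke $\IVT$ beyond degree $d^2$. I would spell this out: restriction to a horizontal edge $y = y_j$ sends $X \mapsto$ a univariate polynomial of degree $\le \deg_X$, and similarly for vertical edges, so the total degree bound $d^2$ dominates all the univariate degrees that appear.

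The step I expect to be the main obstacle is the interior-point analysis in the one-variable lemma: handling the $\sum_{x\in(a,b)}\mathrm{Ind}_x$ terms under the nonlinear substitution $(u,v)\mapsto \frac{uv-1}{u+v}$, in particular making sure that poles of $u+v$ that are not poles of either $u$ or $v$ (i.e. points where $F_{\rm im}G_{\rm re}+F_{\rm re}G_{\rm im}=0$ but neither $F_{\rm im}$ nor $G_{\rm im}$ vanishes) contribute index $0$ on both sides, and that the sign of the half-integer jump matches. Once the one-variable product formula is in hand, assembling the four edges with the correct orientations (as in Definition \ref{def:winding_number}) and dividing by $2$ gives $w(FG\,|\,\partial\Gamma) = w(F\,|\,\partial\Gamma) + w(G\,|\,\partial\Gamma)$ directly.
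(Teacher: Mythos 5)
Your overall strategy (reduce to an edge-wise product formula for Cauchy indices, then sum the four edges) is the same as the paper's, but the one-variable identity you take as the core of the argument is false as stated, and this is a genuine gap, not a detail. With $P_1=F_{\rm re}$, $Q_1=F_{\rm im}$, $P_2=G_{\rm re}$, $Q_2=G_{\rm im}$ restricted to an edge, the identity $\Ind_a^b(P_1P_2-Q_1Q_2,\,P_1Q_2+P_2Q_1)=\Ind_a^b(P_1,Q_1)+\Ind_a^b(P_2,Q_2)$ fails even under your hypotheses: take $F=G=1+iX$ on $[-1,1]$, so $(P_1,Q_1)=(P_2,Q_2)=(1,X)$ have no common roots and $F,G$ never vanish, yet $\Ind_{-1}^{1}(1,X)+\Ind_{-1}^{1}(1,X)=2$ while $FG=(1-X^2)+2iX$ gives $\Ind_{-1}^{1}(1-X^2,2X)=1$. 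The same example refutes the key step in your interior-point analysis: at $x=0$ both $F_{\rm im}$ and $G_{\rm im}$ vanish while $F(0)\ne 0\ne G(0)$, so $u$ and $v$ \emph{do} jump at the same interior point even though $F,G$ have no zero there; nothing in the hypotheses of the proposition excludes common roots of the two imaginary parts on an edge, so this case must be handled, and when it occurs the indices do not simply add.

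The correct edge-wise statement (the paper's Lemma \ref{lem:aux_ind_prod}, which fixes a slip in Eisermann's Theorem 4.5) carries endpoint correction terms: with $P,Q,R,S$ the real/imaginary parts, one has $\Ind_a^b(PR-QS,PS+QR)=\Ind_a^b(P,Q)+\Ind_a^b(R,S)+\frac12\sign\big(((PS+QR)QS)(a)\big)-\frac12\sign\big(((PS+QR)QS)(b)\big)$, under the hypotheses that $P,Q$ (resp.\ $R,S$) have no common root on $[a,b]$ and $\deg(PR-QS),\deg(PS+QR)\le d^2$. In the example above these terms contribute $-1$ and restore the balance. The actual content of the proof of Proposition \ref{prop:mult_wind} is then not the bare additivity along each edge but the observation that, when the lemma is applied to the four oriented edges of $\partial\Gamma$, the correction terms attached to each vertex appear once with a plus and once with a minus and cancel (this uses that $F$ and $G$ do not vanish at the vertices, so the relevant signs are well defined and equal on the two incident edges). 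Your plan omits these terms entirely, so "assembling the four edges and dividing by $2$" would be summing a false identity. A further caution, flagged explicitly in the paper: arguing through the rational function $\frac{uv-1}{u+v}$ invites exactly the cancellation problems (e.g.\ common factors between $PS+QR$ and $QS$) that forced the paper to work with the polynomial pairs and the no-common-root hypotheses rather than with quotients.
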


The proof of Proposition \ref{prop:mult_wind} uses the next lemma as an auxiliary result.

\begin{lemma}\label{lem:aux_ind_prod}
Let $a, b \in \R$ and $P, Q, R, S \in \R[X]$ with 
$\deg (PR - QS),$ $\deg(PS + QR) \le d^2$ and such that 
$P$ and $Q$ have no common root in $[a, b]$ and 
$R$ and $S$ have no common root in $[a, b]$. Then
$$
\Ind_a^b\left(PR-QS, PS+QR\right) =$$
$$
\Ind_a^b\left(P,Q\right) + 
\Ind_a^b\left(R,S\right)
+ \frac12 \sign\Big(   \big((PS+QR)QS\big)(a)    \Big) 
- 
\frac12 \sign
   \Big(\big((PS+QR)QS\big)(b)  \Big).
$$
\end{lemma}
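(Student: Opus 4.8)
The plan is to strip off the degenerate cases, reduce the statement by standard manipulations to an identity concentrated at a single endpoint of the interval, and then prove that local identity by a finite case analysis. Write $F:=P+iQ$ and $G:=R+iS$, and put $A:=PR-QS$ and $B:=PS+QR$, so that $A$ and $B$ are the real and imaginary parts of $FG$; the claim compares the Cauchy index of $(A,B)$ with those of $(P,Q)$ and of $(R,S)$. We may assume $a<b$, since the cases $a\ge b$ are either trivial or reduce to this one via $\Ind_a^b=-\Ind_b^a$. I would first dispose of the cases in which one of $P,Q,R,S$ is the zero polynomial (the hypotheses only forbid $P$ and $Q$, resp.\ $R$ and $S$, from vanishing simultaneously). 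If $Q\equiv 0$, then $A=PR$, $B=PS$, and $P$ has no root in $[a,b]$, so $\Ind_a^b(A,B)=\Ind_a^b(R,S)$ by Remark~\ref{rem:deg_ind}, matching the right-hand side $0+\Ind_a^b(R,S)+0$; the case $S\equiv0$ is symmetric. If $P\equiv 0$ (or $R\equiv 0$), then after cancelling a common nonzero factor the claim becomes exactly the inversion formula $\Ind_a^b(R,S)+\Ind_a^b(S,R)={\rm Var}_a^b(R,S)$ of Proposition~\ref{prop:inv_formula}, combined with Remark~\ref{rem:sign_easy}; its degree hypothesis is met because $\deg A,\deg B\le d^2$ forces $\deg R,\deg S\le d^2$ in that case. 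Henceforth $P,Q,R,S$ are all nonzero.

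Both sides of the identity are additive under subdivision of $[a,b]$: the three Cauchy indices by Remark~\ref{rem:add_int}, and the term $\frac12\sign\big((BQS)(a)\big)-\frac12\sign\big((BQS)(b)\big)$ by telescoping. So, subdividing $[a,b]$ at the roots in $(a,b)$ of the (nonzero) polynomial $\Phi:=P\,Q\,R\,S\,A\,B$ and inserting one extra interior point in each resulting subinterval, I reduce to the case where $\Phi$ has at most one root in $[a,b]$ and it is an endpoint; when $\Phi$ has no root in $[a,b]$ the identity reads $0=0$. A short computation from Definition~\ref{defn:CI_at_a_point} shows that $X\mapsto -X$ exchanges $\Ind^{+}$ at a point with $\Ind^{-}$ at the reflected point, and by Remark~\ref{rem:lin_resc} it transforms the asserted identity on $[a,b]$ into the same identity for $P(-X),Q(-X),R(-X),S(-X)$ on $[-b,-a]$; so I may assume the only root of $\Phi$ in $[a,b]$ is the left endpoint $a$. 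Then $B,Q,S$ have no root in $(a,b]$, hence $\Ind_a^b=\Ind_a^{+}$ for each of the pairs $(A,B),(P,Q),(R,S)$, while $\sign\big((BQS)(b)\big)$ equals the sign of $BQS$ just to the right of $a$, i.e.\ $\sign\big(\widetilde{BQS}(a)\big)$ with $\widetilde{BQS}$ denoting $BQS$ divided by the highest power of $(X-a)$ dividing it. What is left is the local identity
$$
\Ind_a^{+}(A,B)=\Ind_a^{+}(P,Q)+\Ind_a^{+}(R,S)+\frac12\sign\big((BQS)(a)\big)-\frac12\sign\big(\widetilde{BQS}(a)\big).
$$

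To prove this, I would write $P=(X-a)^{p}\widetilde P$, $Q=(X-a)^{q}\widetilde Q$, $R=(X-a)^{r}\widetilde R$, $S=(X-a)^{s}\widetilde S$ with $\widetilde P(a),\widetilde Q(a),\widetilde R(a),\widetilde S(a)\ne 0$; the no-common-root hypotheses give $\min(p,q)=\min(r,s)=0$. All five terms of the local identity depend only on $p,q,r,s$, on the signs of $\widetilde P(a),\widetilde Q(a),\widetilde R(a),\widetilde S(a)$, and on the orders of vanishing and leading signs of $A$ and $B$ at $a$; the latter are obtained by comparing $p+r$ with $q+s$ and $p+s$ with $q+r$. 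I would then run through the nine cases according to which of $p,q$ and which of $r,s$ vanishes, each with the subcases coming from these two comparisons. In the generic subcases the leading coefficients of $A$ and $B$ at $a$ are the evident products of the $\widetilde{(\cdot)}(a)$'s, and the identity becomes a bookkeeping of signs that is settled by Remark~\ref{rem:sign_easy}.

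The main obstacle is the handful of degenerate subcases in which the apparent leading term of $A$, or of $B$, cancels, so that its order of vanishing jumps and its true leading coefficient is not visible from $p,q,r,s$ and the leading signs alone. Here the key input is that $a\in[a,b]$ is a common root neither of $P,Q$ nor of $R,S$, so $P(a)+iQ(a)\ne0$ and $R(a)+iS(a)\ne0$, whence $A(a)+iB(a)=\big(P(a)+iQ(a)\big)\big(R(a)+iS(a)\big)\ne0$: thus $A$ and $B$ cannot vanish simultaneously at $a$, which already eliminates most pathologies, and in each remaining subcase the relation between the vanishing order of the offending polynomial and its leading sign is forced by algebraic identities such as $A(a)\,\widetilde S(a)=-\widetilde Q(a)\big(\widetilde R(a)^2+\widetilde S(a)^2\big)$ (valid when $p=q=r=s=0$ and $B(a)=0$) and their analogues, using that a sum of two squares is positive in an ordered field. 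Verifying that the sign thereby produced matches the correction term $-\frac12\sign\big(\widetilde{BQS}(a)\big)$ closes each such subcase, and the proof is complete.
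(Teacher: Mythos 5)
Your overall strategy is the same as the paper's (reduce by additivity and affine invariance to a single bad endpoint, then a sign/vanishing-order case analysis using identities like $A(a)S(a)=-Q(a)\big(R(a)^2+S(a)^2\big)$ and the fact that $A(a)+iB(a)=(P(a)+iQ(a))(R(a)+iS(a))\ne 0$), but there is a genuine gap at exactly the quantitative heart of the lemma. You never establish $\deg P,\deg Q,\deg R,\deg S\le d^2$ (you note the implication only in the special case $P\equiv 0$); in the paper this is the first step of the proof, obtained by observing that a simultaneous degree drop in $PR-QS$ and $PS+QR$ would force the leading coefficients to satisfy $pr-qs=ps+qr=0$, hence $p^2+q^2=r^2+s^2=0$, impossible in an ordered field. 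This bound is not a formality: the steps you declare trivial are precisely where it is consumed. When $\Phi$ has no root in a subinterval the identity does \emph{not} ``read $0=0$'': the three Cauchy indices vanish, but one must still prove $\sign\big(((PS+QR)QS)(a)\big)=\sign\big(((PS+QR)QS)(b)\big)$; likewise ``the sign of $BQS$ just to the right of $a$'' is not a meaningful notion in an abstract ordered field. Both facts require that a polynomial without roots on the interval has constant sign there, i.e.\ an application of $\IVT_{d^2}$ to $PS+QR$, $Q$ and $S$ separately, which is legitimate only once $\deg Q,\deg S\le d^2$ is known. Under the hypotheses as given (only $\IVT_{d^2}$, not $\IVT$), a nonvanishing polynomial of uncontrolled degree may change sign, so as written these steps are unjustified and the first one is false as stated.

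A secondary hole: your $\Phi:=PQRSAB$ need not be a nonzero polynomial even when $P,Q,R,S$ are all nonzero and the hypotheses hold; e.g.\ $P=R=1$, $Q=X$, $S=-X$ gives $PS+QR\equiv 0$, and $P=S=1$, $Q=R=X$ gives $PR-QS\equiv 0$. So the subdivision at the roots of $\Phi$ breaks down and the cases $B\equiv 0$ and $A\equiv 0$ must be handled separately (both are short: for $B\equiv 0$ the left side and the sign terms vanish and $\Ind_a^b(P,Q)=-\Ind_a^b(R,S)$ after cancelling common factors; for $A\equiv 0$ a brief computation with the inversion formula); the paper disposes of $PS+QR=0$ explicitly at the outset, and its reduction only controls roots of $P,Q,R,S,PS+QR$, which avoids the $A\equiv0$ issue altogether. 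With the degree-bound lemma added, the $\IVT_{d^2}$ applications made explicit, and the $\Phi=0$ cases treated, your endpoint-local case analysis would indeed close, essentially reproducing the paper's five-case argument in a slightly different bookkeeping.
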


\begin{remark} \rm{In \cite[Theorem 4.5]{Eis} there is a statement with a slightly 
different formula and no assumption on polynomials $P, Q, R, S \in \R[X]$.
We observed that this formula does not hold for the case 
$a := 0, b := 1, P := 1, Q := X, R := X-1, S := X$. 
Notice that in this example, $P, Q, R, S$ actually meet our extra assumptions, 
but since $PS + QR = QS = X^2$, if we  deal with 
the rational 
function $PS + QR/QS$ 
as in \cite[Theorem 4.5]{Eis}, there is a simplification
which is the cause of the trouble. 
For this reason, we work with polynomials and not rational functions; but then 
the extra assumptions of not having common roots are necessary,
since common factors would not modify the Cauchy indices but could modify the signs
involved in the formula in Lemma \ref{lem:aux_ind_prod}.
To illustrate this situation, 
$a := 0, b := 1, P := X-1, Q := X(X-1), R := X-1, S := X$ would be a counterexample
if we made no assumptions.}
\end{remark}

\begin{proof}{Proof of Lemma \ref{lem:aux_ind_prod}:}

First, we prove that  the condition 
$\deg (PR - QS),$ $\deg(PS + QR) \le d^2$ implies
$\deg P, \deg Q,
\deg R, \deg S \le d^2$. If at least one of the polynomials $P$, $Q$, $R$ or $S$ equals $0$, then the degree bound 
on the other three polynomials clearly holds. If none of these polynomials equals $0$, 
suppose that the claim does not hold and let us look for a contradiction. Respectively denote by $p, q, r, s \in \R$ their leading coefficients. 
The fact that there is a degree drop in $PR - QS$ and $PS + QR$ with respect to one of the polynomials 
$P, Q, R$ or $S$ implies that $pr - qs = 0$ and $ps + qr =0$. Since 
$p, q, r, s \ne 0$, we deduce that $p^2 + q^2 = r^2 + s^2 = 0$, and this is not possible since 
$\R$ is an ordered field.

Now we continue to prove the lemma following the ideas in \cite[Theorem 4.5]{Eis}. 
If
$Q = 0$, $S = 0$ or $PS + QR = 0$, the result is immediate. 
If $P = 0$ or $R = 0$, the result
follows from Proposition \ref{prop:inv_formula} using Remark \ref{rem:sign_easy}. 
In every other case, from the invariance by 
affine change of variables and the additivity on intervals of Cauchy index (Remarks \ref{rem:lin_resc}
and \ref{rem:add_int}) we suppose that $a$ is the only possible root of 
$P, Q, R, S$ or $PS + QR$
on $[a,b]$. We consider several cases as follows.

\begin{itemize}

\item If $Q(a) \ne 0, S(a) \ne 0$ and $(PS + QR)(a) \ne 0$, then 
$$
\Ind_a^b\left(PR-QS, PS+QR\right) = 
\Ind_a^b\left(P,Q\right) = 
\Ind_a^b\left(R,S\right) = 0
$$
and
$$
\sign\Big(   \big((PS+QR)QS\big)(a)    \Big) 
 =  
\sign\Big(\big((PS+QR)QS\big)(b)  \Big)
$$
so the identity holds.

\item If $Q(a) \ne 0, S(a) \ne 0$ and $(PS + QR)(a) = 0$, then
$$
\Ind_a^b\left(P,Q\right) = 
\Ind_a^b\left(R,S\right) = 
\sign\Big(   \big((PS+QR)QS\big)(a)    \Big)  =  
0.
$$
On the other hand
$$
\frac{P(a)}{Q(a)} =  - \frac{R(a)}{S(a)},  
$$
so 
$$
(PR - QS)(a) = 
Q(a)S(a)\underbrace{\left(\frac{P(a)}{Q(a)}\frac{R(a)}{S(a)} - 1  \right)}_{<0} \ne 0
$$
Write $PS + QR = (X- a)^{\mu}T$ with $\mu \in \Z_{>0}$
and $T(a) \ne 0$. Note that $\sign(T(a)) = \sign(T(b)) = \sign((PS+QR)(b))$. 
Then we have
$$
\Ind_a^b\left(PR-QS, PS+QR\right) = -\frac12 \sign \big(Q(a)S(a)T(a)\big) = -\frac12 
\sign\Big(\big((PS+QR)QS\big)(b)  \Big)
$$
so the identity holds.

\item If $Q(a) = 0$ and $S(a) \ne 0$, since 
$P$ and $Q$ have no common root in $[a, b]$ then $(PS + QR)(a) \ne 0$
and we have that 
$$
\Ind_a^b\left(PR-QS, PS+QR\right) = 
\Ind_a^b\left(R,S\right) = 
\sign\Big(   \big((PS+QR)QS\big)(a)    \Big)  =  
0.
$$
Write $Q = (X- a)^{\mu}\widetilde Q$ with $\mu \in \Z_{>0}$
and $\widetilde Q(a) \ne 0$.
Then
$$
\Ind_a^b\left(P,Q\right) 
= \frac12 \sign\big(P(a)\widetilde Q(a)\big) 
= \frac12 \sign\Big(\big((PS+QR)\widetilde QS\big)(a)  \Big)
= \frac12 \sign\Big(\big((PS+QR)QS\big)(b)  \Big)
$$
so the identity holds.

\item If $Q(a) \ne 0$ and $S(a) = 0$ we proceed in a similar way to the previous case. 

\item If $Q(a) = 0$ and $S(a) = 0$, then $(PS + QR)(a) = 0$, and since
$P$ and $Q$ have no common root in $[a, b]$ and 
$R$ and $S$ have no common root in $[a, b]$, 
$P(a) \ne 0$, $R(a) \ne 0$.

Write $PS + QR = (X- a)^{\mu_0}T$ with $\mu_0 \in \Z_{>0}$
and $T(a) \ne 0$, $Q = (X- a)^{\mu_1}\widetilde Q$ with $\mu_1 \in \Z_{>0}$
and $\widetilde Q(a) \ne 0$ and
$S = (X- a)^{\mu_2}\widetilde S$ with $\mu_2 \in \Z_{>0}$
and $\widetilde S(a) \ne 0$.
We denote
$$
\begin{array}{rcl}
\sigma_1 & := & \sign( P(a) ) \in \{-1, 1\}, \cr
\sigma_2 & := & \sign( R(a) ) \in \{-1, 1\}, \cr
\sigma_3 & := & \sign(T(a) ) \in \{-1, 1\}, \cr
\sigma_4 & := & \sign( \widetilde Q(a) )\in \{-1, 1\}, \cr
\sigma_5 & := & \sign( \widetilde S(a) )\in \{-1, 1\}. \cr
\end{array}
$$
We need to prove that
$$
\sigma_1\sigma_2\sigma_3 = \sigma_1\sigma_4 + \sigma_2\sigma_5 - \sigma_3\sigma_4\sigma_5
$$
or, equivalently, 
\begin{equation}\label{eq:aux_prod_formula}
\big(\sigma_1\sigma_2 + \sigma_4\sigma_5\big)\sigma_3 = \sigma_1\sigma_4 + \sigma_2\sigma_5 
\end{equation}
We take into account that 
$\sigma_1 = \sign(P(b))$, 
$\sigma_2 = \sign(R(b))$, 
$\sigma_3 = \sign((PS+QR)(b))$, 
$\sigma_4 = \sign(Q(b))$ and 
$\sigma_5 = \sign(S(b))$ and we divide in cases as follows. 
\begin{itemize}
\item If $\sigma_1 = \sigma_5$ and $\sigma_2 = \sigma_4$, then $\sigma_3 = 1$ and 
equation (\ref{eq:aux_prod_formula})
holds. 
\item If $\sigma_1 = -\sigma_5$ and $\sigma_2 = -\sigma_4$ then $\sigma_3 = -1$ and
equation (\ref{eq:aux_prod_formula})
holds. 
\item In every other case, exactly three elements in the set $\{\sigma_1, \sigma_2, \sigma_4, \sigma_5\}$ are 
equal and the remaining one is different. Then 
$$
\sigma_1\sigma_2 + \sigma_4\sigma_5 = \sigma_1\sigma_4 + \sigma_2\sigma_5 = 0
$$
and
equation (\ref{eq:aux_prod_formula})
holds. 
\end{itemize}

\end{itemize}

\end{proof}

\begin{proof}{Proof of Proposition \ref{prop:mult_wind}:}
The proof is done in as in 
\cite[Corollary 4.6 and Corollary 4.7]{Eis}. 
After replacing each winding number by its definition, we apply 
Lemma \ref{lem:aux_ind_prod} once on each side of $\partial \Gamma$.
For instance, on the bottom side, we take $a := x_0, b := x_1$, 
$P := F_{\rm re}(X, y_0), Q := F_{\rm im}(X, y_0), 
R:= G_{\rm re}(X, y_0)$ and
 $S := G_{\rm im}(X, y_0)$.
The identity in the lemma is obtained after checking that on
each vertex of $\partial \Gamma$, 
signs cancel after being added on one side and subtracted on the other side. 
\end{proof}

From Example \ref{exm:wind_linear} and Proposition \ref{prop:mult_wind} 
the following result is easily deduced. 

\begin{example}\label{exm:wind_monomial} For $e \in \Z_{\ge 1}$ with $e \le d^2$,
$$
w(Z^e \, | \, \partial \Gamma) = e.
$$
if $\Gamma \subset \R^2$ is a rectangle containing $0$ in its interior. 
\end{example}

Finally, we recall the property saying that the winding number 
vanishes in a \emph{small} rectangle 
around a non-zero of a polynomial.

\begin{proposition} \label{prop:non_van_zero_local} 
Let $(x, y) \in \R^2$ and $F \in \C[X,Y]$ with $\deg F \le
d^2$ and 
such that $F(x,y) \ne 0$. Then there exists $\delta \in \R, \delta > 0$ such that 
for every rectangle $\Gamma \subset [x - \delta, x + \delta] \times [y - \delta, y + \delta]
\subset \R^2$, $F$ does not vanish in $\Gamma$ and $w(F \, | \, \partial \Gamma) = 0$.
 
\end{proposition}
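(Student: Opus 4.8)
The plan is to reduce to the case $F_{\rm im}(x,y)\neq 0$, to produce by an elementary inequality (valid in \emph{any} ordered field, with no use of $\IVT$) a closed square around $(x,y)$ on which $F_{\rm im}$ keeps a constant nonzero sign, and then to observe that on rectangles inside that square every Cauchy index occurring in the winding number has a rootless ``denominator'' and hence vanishes. For the reduction: since $F(x,y)\neq 0$ either $F_{\rm re}(x,y)\neq 0$ or $F_{\rm im}(x,y)\neq 0$; if only $F_{\rm re}(x,y)\neq 0$, I would work with $G:=iF$ instead, noting that $G_{\rm im}=F_{\rm re}$ so $G_{\rm im}(x,y)\neq 0$, that $\deg G=\deg F\le d^2$, that $G$ vanishes exactly where $F$ does, and that by Proposition~\ref{prop:mult_wind} applied with the constant polynomial $i$ (for which $w(i\,|\,\partial\Gamma)=0$, since $(i)_{\rm im}=1$ has no root) one has $w(G\,|\,\partial\Gamma)=w(F\,|\,\partial\Gamma)+w(i\,|\,\partial\Gamma)=w(F\,|\,\partial\Gamma)$ on every rectangle $\Gamma$ on which $F$ does not vanish. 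So it suffices to prove the statement assuming $c_0:=F_{\rm im}(x,y)\neq 0$, which I now do.

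Write $F_{\rm im}(x+U,\,y+V)=c_0+\sum_{(i,j)\neq(0,0)}c_{ij}\,U^iV^j$ with finitely many $c_{ij}\in\R$, put $M:=1+\sum_{(i,j)\neq(0,0)}|c_{ij}|$ and $\delta:=\min\{1,\ |c_0|/(2M)\}\in\R_{>0}$. For $|U|,|V|\le\delta\le 1$ and $(i,j)\neq(0,0)$ one has $|U^iV^j|\le\max(|U|,|V|)\le\delta$, hence on the closed square $S_\delta:=[x-\delta,x+\delta]\times[y-\delta,y+\delta]$ one gets $|F_{\rm im}-c_0|\le\delta M\le|c_0|/2$; thus $F_{\rm im}$ has the constant sign $\sign(c_0)\neq 0$ on all of $S_\delta$, and in particular $F$ does not vanish on $S_\delta$, hence not on any rectangle $\Gamma\subset S_\delta$. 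Now fix such a $\Gamma=[x_0,x_1]\times[y_0,y_1]$. By Definition~\ref{def:winding_number}, $w(F\,|\,\partial\Gamma)$ is one half of a sum of four Cauchy indices, one per side of $\partial\Gamma$, each of the form $\Ind_a^b(Q,P)$ with $Q$ the restriction of $F_{\rm re}$ and $P$ the restriction of $F_{\rm im}$ to that side (one coordinate fixed, the other ranging over an interval contained in $S_\delta$). In each of them $P$ is a univariate polynomial with no root on the relevant interval. By Definition~\ref{defn:CI_at_a_point}, $\Ind_x^{\pm}(Q,P)\neq 0$ forces $\mu(x)>\nu(x)\ge 0$, i.e.\ $P(x)=0$; so with $P$ rootless on the closed interval, every term entering the four Cauchy indices (the sums $\sum_{x\in(a,b)}\Ind_x$ as well as the endpoint terms $\Ind_a^{+}$, $\Ind_b^{-}$) vanishes. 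Hence all four Cauchy indices are $0$, $w(F\,|\,\partial\Gamma)=0$, and together with the non-vanishing this proves the proposition.

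The only genuinely delicate point is that, unlike in \cite{Eis}, we may not invoke the full $\IVT$ to pass from ``$F_{\rm im}(x,y)\neq 0$'' to ``$F_{\rm im}$ has constant sign near $(x,y)$''; this is why the radius $\delta$ above is exhibited explicitly by a crude bound on the non-constant terms, an argument that lives entirely inside the ordered field and needs nothing about roots. A secondary, purely bookkeeping, subtlety is the asymmetry between $F_{\rm re}$ and $F_{\rm im}$ in the definition of the winding number, which is what makes the normalization to $F_{\rm im}(x,y)\neq 0$ (via $G=iF$) convenient; alternatively one could keep $F$ and, in the case $F_{\rm re}(x,y)\neq 0$, use the inversion formula of Proposition~\ref{prop:inv_formula} together with $\Ind_a^b(F_{\rm im}|_{\text{side}},F_{\rm re}|_{\text{side}})=0$ (again because the second argument is rootless) to rewrite each side's Cauchy index as a ${\rm Var}$-difference and then telescope these around $\partial\Gamma$ to $0$.
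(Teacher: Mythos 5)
Your proof is correct and follows the same overall route as the paper's (which adapts \cite[Lemma 5.2]{Eis}): multiply $F$ by a suitable nonzero constant so that the imaginary part is nonzero at $(x,y)$, produce an explicit $\delta$ by crudely bounding the non-constant terms so that the imaginary part keeps a constant nonzero sign on the whole square, observe that every Cauchy index occurring in $w(F\,|\,\partial\Gamma)$ then has a rootless second argument (so all terms in Definition \ref{defn:CI_at_a_point} vanish), and transfer back to $F$ via Proposition \ref{prop:mult_wind}. The one genuine difference is the choice of $\delta$: the paper takes $\delta$ to be a minimum of expressions of the form $\left(1/(\Delta|c_j|)\right)^{1/(j_1+j_2)}$, and therefore must first derive from $\IVT_{d^2}$ the existence of $n$-th roots of nonnegative elements for $n\le d^2$, whereas your $\delta=\min\{1,\,|c_0|/(2M)\}$, combined with the observation that $\delta^{i+j}\le\delta$ once $\delta\le 1$, uses only the ordered-field operations. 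So your argument establishes this local statement in an arbitrary ordered field, with no appeal to $\IVT_{d^2}$ at all --- a small but genuine simplification of the paper's proof. The remaining differences are cosmetic: the paper normalizes to $G_{\rm im}(x,y)=1$ by multiplying by $i/F(x,y)$, while you multiply by $i$ only when needed and keep $c_0$ general, which changes nothing essential.
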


\begin{proof}{Proof:} 
First, since $\IVT_{d^2}$ holds, it is easy to see
that for every $a \in \R$ with $a \ge 0$ and every $n \in \Z_{\ge 1}, n \le d^2$, 
there is a unique $c \in \R$ such that $c \ge 0$ and $c^n = a$, 
which we note as $c = a^{1/n}$. It is clear that if $a >0$ then $a^{1/n} > 0$. 

Then we just follow the arguments from \cite[Lemma 5.2]{Eis}.  
We take
$$
G := \frac{i}{F(x, y)} F \in \C[X, Y]
$$
and we need to prove that 
there exists $\delta \in \R, \delta > 0$ such that 
$G_{\im} \in \R[X, Y]$ does not vanish in
$[x - \delta, x + \delta] \times [y - \delta, y + \delta] \subset \R^2$.
In this case, 
for every rectangle $\Gamma \subset [x - \delta, x + \delta] \times [y - \delta, y + \delta]$, 
we have that $G \in \C[X, Y]$ does not vanish in $\Gamma$ and  $w(G \, | \, \partial \Gamma) = 0$. 
Then the lemma follows using Proposition \ref{prop:mult_wind}.

Suppose now
$$
G_{\im} = \sum_{j = (j_1, j_2)\atop{j_1 + j_2 \le d^2}}c_j(X-x)^{j_1}(Y-y)^{j_2}.
$$
Since $G(x, y) = i$ we know that $G_{\im}(x, y) = 1$. If $G_{\im}$ is constant, then any positive value of $\delta$ works. 
Otherwise, taking
$$
\Delta = \frac12(d^2+1)(d^2+2)
$$
and
$$
\delta := \min \left\{ \left(\frac{1}{\Delta|c_j|}\right)^{\frac1{j_1+j_2}} \ | \  
j = (j_1, j_2), 1 \le j_1 + j_2 \le d^2, c_j \ne 0
\right\} > 0,
$$
for every $(z, w) \in [- \delta, \delta] \times [-\delta, \delta]$ we have
$$
G_{\im}(x+z , y+w) 
=  1 + \displaystyle{\sum_{j = (j_1, j_2)\atop{1 \le j_1 + j_2 \le d^2}}c_jz^{j_1}w^{j_2}} 
\ge  1 - \displaystyle{\sum_{j = (j_1, j_2)\atop{1 \le j_1 + j_2 \le d^2}}|c_j|\delta^{j_1+j_2}}
\ge  1 - \displaystyle{\sum_{j = (j_1, j_2)\atop{1 \le j_1 + j_2 \le d^2}}\frac1{\Delta}} 
=  \frac1\Delta 
>  0.
$$
\end{proof}

\subsection{Subresultant polynomials}\label{sec:subres}

Let $\D$ be an integral domain.
The subresultant polynomial sequence of two polynomials $P, Q \in \D[X]$ 
is a sequence of polynomials in $\D[X]$ which
contains the
classical Sylvester resultant of $P, Q$;
more specifically, the last subresultant polynomial, which actually belongs to $\D$, 
coincides up to sign with 
the Sylvester resultant. 
Even though the subresultant polynomials of $P$ and $Q$ are 
defined
in a completely 
different way, they are closely related to the polynomials appearing in the remainder 
sequence of $P$ and $Q$, as reflected in the Structure Theorem of Subresultants
(Theorem \ref{thm:structure_thm_subresultants}). It can be 
proved that the 
behavior 
of their coefficients is better controlled than the 
behavior of the 
coefficients of the polynomials in the remainder sequence, and for this reason, they constitute a 
widely used tool in gcd computation, real root counting and many other problems in computational algebra. 
In Section 3, we will use subresultants in the particular case of $\D = \R[Y]$ 
and the good 
behavior
of their coefficients 
implies  a good control of 
the degree in $Y$ (Proposition \ref{prop:degree_bound_res}), 
which will be a key point to obtain our 
main result.

We include now some definitions and properties concerning subresultants. We refer
the reader to \cite{BPRbook} for proofs and details.

\begin{defn}\label{def:subres} Let $P
, Q \in \D[X] \setminus \{0\}$ with $p := \deg P
 \ge 1$ 
and
$q := \deg Q
< p$. 

\begin{itemize}
\item For $0 \le
  j \le q$, the  Sylvester-Habicht matrix 
  ${\rm SyHa}_j (P, Q) \in \D^{(p + q - 2j) \times 
(p + q - j)}$ is the matrix 
  whose rows are the polynomials
  \[ X^{q - j - 1}P, \ldots, P, Q, \ldots, X^{p - j - 1}Q, \]
  expressed in the monomial basis $X^{p+q-j-1},\ldots,X,1$.
\item For $0 \le j \le q$, the $j$-th subresultant polynomial of 
$P$ and $Q$, ${\sResP}_j (P, Q) \in \D[X]$ 
  is the polynomial determinant of ${\rm SyHa}_j (P, Q)$, i.e.
  $$
  {\sResP}_j (P, Q):=\sum_{0 \le i \le j} \det({\rm SyHa}_{j,i}(P, Q)) \cdot X^i
  \in \D[X]
  $$
  where ${\rm SyHa}_{j,i}(P, Q)\in \D^{(p + q - 2j) \times 
(p + q - 2j)}$ is the matrix obtained by taking the
  $p+q-2j-1$ first
  columns 
  and the $(p+q-j-i)$-th column of  ${\rm SyHa}_j (P, Q)$.
 By convention, we
  extend this definition 
  with
  \begin{eqnarray*}
    {\sResP}_p (P, Q) & := & P \ \in \D[X],\\
    {\sResP}_{p - 1} (P, Q) & := & Q \ \in \D[X],\\
    {\sResP}_j (P, Q) & := & \ 0  \ \in \D[X] \qquad \hbox{ for } q < j < p - 1.
  \end{eqnarray*}

\item For $0 \le j \le q$, 
the $j$-th signed subresultant coefficient of $P$ and $Q$, ${\sRes}_j (P,
Q)\in \D$  is the coefficient of $X^j$ in
${\sResP}_j (P, Q)$. 
By convention, we extend this definition with  
\begin{eqnarray*}
    {\sRes}_p(P, Q) & := & 1 \ \in \D \qquad \hbox{(even if } P \hbox{ is not monic)},\\
    {\sRes}_j(P, Q) & := & 0 \ \in \D \qquad \hbox{ for } q < j \le p - 1.
  \end{eqnarray*}

\item For $0 \le j \le p$, ${\sResP}_j (P, Q)$ is said to be \emph{defective} 
if $\deg {\sResP}_j (P, Q) < j$ or, equivalently, if ${\sRes}_j (P, Q)
= 0$.

\end{itemize}

\end{defn}

We will also use the following notation.

\begin{notn}\label{notn:lead_non_def}
Let $P, Q
\in \D[X] \setminus \{0\}$ 
with $p := \deg P
\ge 1$ and $q := \deg Q
   < p$.
Let 
$(d_0,\ldots,d_s)$ be
the sequence of degrees of the non-defective subresultant polynomials of $P$ and 
$Q$ in decreasing order
(note that $d_0 = p$ and $d_1 = q$).
For $1 \le i \le s$, 
$$
T_{d_{i-1}-1}(P,Q):={\rm lcoeff}({\sResP}_{d_{i-1}-1}(P,Q)) \in \D\setminus \{0\}.
$$
We extend this notation with $T_{p}(P,Q):= 1 \in \D\setminus \{0\}$.
\end{notn}

The following theorem is one of the most important results in the theory 
of subresultants. This result has a long history \cite{Ha}. We quote its more recent form in \cite{BPRbook}, which is a slight improvement of
\cite{LiR}.

\begin{theorem}[Structure Theorem of Subresultants]
\label{thm:structure_thm_subresultants} 
Let $P, Q
\in \D[X] \setminus \{0\}$ 
with $p := \deg P
  \ge 1$ and $q := \deg Q
   < p$.
Let 
$(d_0,\ldots,d_s)$ be
the sequence of degrees of the non-defective subresultant polynomials of $P$ and 
$Q$ in decreasing order
and let 
$d_{-1} := p+1$. 
Then
\begin{itemize}
\item for $1 \le i \le s$, 
$$
{\sResP}_{d_{i-1}-2}(P,Q) = \dots = 
{\sResP}_{d_{i} + 1}(P,Q) = 0 \in \D[X]
$$
and 
${\sResP}_{d_{i-1}-1}(P,Q)$ and 
${\sResP}_{d_{i}}(P,Q)$ are proportional. More precisely,
$$
{\sRes}_{d_{i}}(P,Q) \cdot {\sResP}_{d_{i-1}-1}(P,Q) =  
{T}_{d_{i-1}-1}(P,Q) \cdot {\sResP}_{d_{i}}(P,Q) \in \D[X]
$$
with
$$
{\sRes}_{d_{i}}(P,Q)
=
(-1)^{\frac12(d_{i-1}-d_i)(d_{i-1}-d_i -1)} 
\frac{{T}_{d_{i-1}-1}(P,Q)^{d_{i-1}-d_i}}
{{\sRes}_{d_{i-1}}(P,Q)^{d_{i-1}-d_i-1}} \in \D.
$$
This implies $\deg {\sResP}_{d_{i-1}-1}(P,Q) = d_i$.

\item for $1 \le i \le s$, 
$$\begin{array}{cl}
&T_{d_{i-2}-1}(P,Q) \cdot 
{\sRes}_{d_{i-1}}(P,Q)
\cdot {\sResP}_{d_{i}-1}(P, Q)\\[2mm]
=&
-
{\rm Rem}
\left(
T_{d_{i-1}-1}(P,Q) \cdot 
{\sRes}_{d_{i}}(P,Q)
\cdot {\sResP}_{d_{i-2}-1}(P, Q)
,
{\sResP}_{d_{i-1}-1}(P, Q) 
\right) 
\in \D[X]
\end{array}
$$
and
$${\rm Quot}
\left(
T_{d_{i-1}-1}(P,Q) \cdot 
{\sRes}_{d_{i}}(P,Q)
\cdot {\sResP}_{d_{i-2}-1}(P, Q)
,
{\sResP}_{d_{i-1}-1}(P, Q) 
\right) 
\in \D[X]
$$
(where ${\rm Rem}$ and ${\rm Quot}$ means the remainder and quotient
in the euclidean division in ${\rm qf}(\D)[X]$ of the first polynomial by the second polynomial).

\item ${\sResP}_{d_{s-1}-1}(P, Q) \in \D[X]$ 
and ${\sResP}_{d_{s}}(P, Q) \in \D[X]$
are the 
greatest common divisor of $P$ and $Q$ in ${\rm qf}(\D)[X]$ 
multiplied by elements in $\D$. 
In addition, if $d_s > 0$ then
$$
{\sResP}_{d_{s}-1}(P, Q) = \dots = {\sResP}_{0}(P, Q) = 0 \in \D[X].
$$
\end{itemize}
\end{theorem}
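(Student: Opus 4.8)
The plan is to derive the theorem from the determinantal definition of the polynomials $\sResP_j(P,Q)$ by elementary row operations on the Sylvester--Habicht matrices, exactly as in \cite{BPRbook}: everything reduces to linear algebra over $\D$ together with careful bookkeeping of signs and scalar factors, and the argument is an induction on $s$ (equivalently on $q = \deg Q$), the base cases $s \le 1$ being immediate from the conventions $\sResP_p(P,Q) = P$, $\sResP_{p-1}(P,Q) = Q$ and $d_{-1} = p+1$. The first ingredient (a ``characterization'') is that, for $0 \le j \le q$, the polynomial $\sResP_j(P,Q)$ is, up to a scalar factor, the unique element of $\D[X]$ of the form $UP+VQ$ with $\deg U < q-j$, $\deg V < p-j$ and $\deg(UP+VQ) \le j$, the coefficient of $X^j$ in it being $\sRes_j(P,Q)$; this follows from Cramer's rule applied to ${\rm SyHa}_j(P,Q)$. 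The second ingredient (a ``reduction step'') is that the row operations reducing each row $X^kP$ of ${\rm SyHa}_j(P,Q)$ modulo $Q$ identify $\sResP_j(P,Q)$, for $j$ in the appropriate range, with a power of ${\rm lcoeff}(Q)$ times $\pm\,\sResP_j(Q,R)$, where $R$ is $\Prem(P,Q)$ up to sign (equivalently $\Rem(P,Q)$ over ${\rm qf}(\D)$); this is the basic specialization step linking the subresultants of $(P,Q)$ with those of $(Q,R)$.

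Next I would analyse what happens at a defective index. The vanishing of the leading minors of ${\rm SyHa}_j(P,Q)$ forces $\sResP_j(P,Q) = 0$ for every $j$ strictly between two consecutive non-defective degrees $d_{i-1}-1$ and $d_i$, yielding the claimed string of zero subresultants; and comparing the combinations $UP+VQ$ supplied by the characterization shows that $\sResP_{d_{i-1}-1}(P,Q)$ and $\sResP_{d_i}(P,Q)$ span the same $\D$-line. Reading off the coefficient of $X^{d_i}$ on each side --- it equals $T_{d_{i-1}-1}(P,Q)$ on one side and $\sRes_{d_i}(P,Q)$ on the other --- gives the first displayed identity, and in particular $\deg\sResP_{d_{i-1}-1}(P,Q) = d_i$. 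The closed formula for $\sRes_{d_i}(P,Q)$ then comes from a Laplace expansion of the relevant $(p+q-2d_i)\times(p+q-2d_i)$ minor along the $d_{i-1}-d_i$ columns that are ``missing'' relative to the previous non-defective stage, identifying the complementary minor with a power of $\sRes_{d_{i-1}}(P,Q)$; the sign $(-1)^{\frac12(d_{i-1}-d_i)(d_{i-1}-d_i-1)}$ is the signature of the permutation reversing $d_{i-1}-d_i$ consecutive columns.

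For the three-term recurrence, I would apply the reduction step to the pair $(\sResP_{d_{i-2}}(P,Q),\sResP_{d_{i-1}}(P,Q))$ --- which, by the proportionality just proved, agrees up to $\D$-scalars with $(\sResP_{d_{i-2}-1}(P,Q),\sResP_{d_{i-1}-1}(P,Q))$ --- and then identify the combination obtained with $\sResP_{d_i-1}(P,Q)$ via the characterization; this produces exactly the stated remainder identity, the scalars $T_{d_{i-2}-1}$, $\sRes_{d_{i-1}}$, $T_{d_{i-1}-1}$, $\sRes_{d_i}$ being precisely what is needed to clear denominators and remain in $\D[X]$, and the same computation puts the corresponding quotient in $\D[X]$. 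For the last item, over ${\rm qf}(\D)$ the polynomials $\sResP_{d_i-1}(P,Q)$ form, up to nonzero scalars, a Euclidean remainder sequence for $P$ and $Q$, so $\sResP_{d_{s-1}-1}(P,Q)$ and $\sResP_{d_s}(P,Q)$ generate the ideal of the greatest common divisor of $P$ and $Q$; and when $d_s>0$ nothing remains to be divided, giving $\sResP_{d_s-1}(P,Q) = \dots = \sResP_0(P,Q) = 0$.

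I expect the main obstacle to be the sign-and-scalar bookkeeping of the second step: pinning down simultaneously the exponent $\frac12(d_{i-1}-d_i)(d_{i-1}-d_i-1)$ and the exact powers of $T_{d_{i-1}-1}(P,Q)$ and $\sRes_{d_{i-1}}(P,Q)$ requires tracking the Laplace expansions of the Sylvester--Habicht minors with some care --- this is where the improvement over \cite{LiR} lives --- but the difficulty is computational rather than conceptual.
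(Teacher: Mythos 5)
Your outline is correct and follows essentially the same route as the paper, which proves this theorem simply by invoking the standard argument of \cite[Chapter 8]{BPRbook}: the determinantal (Sylvester--Habicht) definition, the characterization of $\sResP_j(P,Q)$ as a distinguished combination $UP+VQ$ with degree constraints, the reduction step linking the subresultants of $(P,Q)$ to those of $(Q,\Rem(P,Q))$, and the Laplace-expansion bookkeeping for the scalar and sign factors. Nothing in your plan deviates from that reference's proof, so no further comparison is needed.
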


$$\begin{array}{rl}
\rule{8cm}{0.4pt} & \sResP_{d_0} = \sResP_p = P \\
\rule{6.2cm}{0.4pt} & \sResP_{d_0 - 1} = \sResP_{p-1} = Q \\
0 & \\[-1mm]
\vdots  {\hspace{0.05cm} }& \\
0 & \\[-2mm]
\rule{6.2cm}{0.4pt} & \sResP_{d_1} = \sResP_{q} \\
\rule{4.2cm}{0.4pt}& \sResP_{d_1 - 1}  \\
0 & \\ [-1mm]
\vdots {\hspace{0.05cm} } & \\[-2mm]
\vdots {\hspace{0.05cm} } & \\
0 & \\ [-2mm]
\rule{4.2cm}{0.4pt} & \sResP_{d_2} \\
\vdots {\hspace{0.05cm} } & \\[-2mm]
\vdots {\hspace{0.05cm} } & \\[-1mm]
\rule{1.3cm}{0.4pt}& \sResP_{d_{s-1} - 1}  \\
0 & \\ [-1mm]
\vdots {\hspace{0.05cm} } & \\[-2mm]
\vdots {\hspace{0.05cm} } & \\
0 & \\ [-2mm]
\rule{1.3cm}{0.4pt} & \sResP_{d_s}  \\
0 & \\ [-1mm]
\vdots {\hspace{0.05cm} } & \\
0 & \\ [-2mm]
\end{array}
$$

\begin{proof}{Proof:} See
\cite[Chapter 8]{BPRbook}.
\end{proof}

As said before, in Section \ref{sc:main} we will use subresultants in the particular 
case of $\D = \R[Y]$ and we will need some degree bounds which we develop here.

\begin{proposition}\label{prop:degree_bound_res}
Let $P, Q \in \R[X, Y] \setminus \{0\}$ with $p := \deg_X P
 \ge 1$, $q := \deg_X  Q
  < p$ and total degree $\deg P, \deg Q \le d$ (with $d \ge p$). 
We consider subresultants with respect to variable $X$ 
(this is to say, 
following Definition \ref{def:subres} 
we take $\D = \R[Y]$).
For $0 \le j \le q$ and 
$0 \le i \le j$, the degree in $Y$ of the coefficient of $X^i$ 
in ${\sResP}_j(P, Q) \in \R[Y][X]$ is bounded by $d^2$.
\end{proposition}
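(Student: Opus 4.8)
The plan is to bound the degree in $Y$ of the entries of the Sylvester--Habicht matrix and then use the fact that ${\sResP}_j(P,Q)$ is, coefficient by coefficient, a determinant of a submatrix of fixed size. First I would record the basic degree bookkeeping: since $\deg P, \deg Q \le d$ and we view $P, Q \in \D[X]$ with $\D = \R[Y]$, the coefficient of $X^k$ in $P$ (resp.\ $Q$) is a polynomial in $Y$ of degree at most $d - k$ (this uses that the total degree is at most $d$). Hence in the row $X^{\ell} P$ of ${\rm SyHa}_j(P,Q)$, the entry sitting in the column of $X^{m}$ is the coefficient of $X^{m-\ell}$ in $P$, which has $Y$-degree at most $d - (m - \ell)$; similarly for the rows coming from $Q$. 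So each entry of ${\rm SyHa}_j(P,Q)$ in the column indexed by the monomial $X^{m}$ (with $0 \le m \le p+q-j-1$) has $Y$-degree at most $d - m$, and in particular at most $d$.

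Next I would estimate $\det({\rm SyHa}_{j,i}(P,Q))$, the coefficient of $X^i$ in ${\sResP}_j(P,Q)$. This is the determinant of a $(p+q-2j)\times(p+q-2j)$ matrix whose columns are: the first $p+q-2j-1$ columns of ${\rm SyHa}_j(P,Q)$ — i.e.\ the columns indexed by the highest-degree monomials $X^{p+q-j-1}, X^{p+q-j-2}, \dots, X^{j+1}$ — together with the column indexed by $X^{i}$. Expanding the determinant as a signed sum of products of $n := p+q-2j$ entries, one from each column, the $Y$-degree of each product is bounded by the sum of the per-column degree bounds. Using the bound ``entry in the column of $X^m$ has $Y$-degree $\le d - m$'', the total is at most
$$
\sum_{m = j+1}^{p+q-j-1}(d - m) \; + \; (d - i).
$$
I would then just bound this crudely: each of the $n = p+q-2j$ summands is at most $d$ (since $m \ge 0$ and $i \ge 0$ give $d - m \le d$, $d - i \le d$), so the whole sum is at most $nd = (p+q-2j)d \le (p + q)d \le 2d \cdot d$; more carefully, since $q < p \le d$ we have $p + q \le 2d - 1 < 2d$, and one can tighten to get $\le d^2$. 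The cleanest route is probably: $p+q-2j \le p + q \le 2p - 1 \le 2d - 1$, and then observe that the summands are not all equal to $d$ — the column of $X^m$ contributes $d-m$ with $m$ ranging over $\{i\}\cup\{j+1,\dots,p+q-j-1\}$, all nonnegative and distinct, so the sum is at most $d + (d-1) + \dots$, and checking that $(p+q-2j)$ distinct nonnegative integers subtracted from $d$ each, summed, stays below $d^2$ for $p \le d$.

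The routine but slightly fiddly part — and the only real obstacle — is arranging the arithmetic so the bound comes out as exactly $d^2$ rather than something like $2d^2$ or $d^2 + O(d)$. I expect the key observations that make it work are: (i) the relevant columns are indexed by the \emph{top} $p+q-2j-1$ monomials plus one more, so their degree bounds $d-m$ are small when the index list reaches high into $\{j+1,\dots,p+q-j-1\}$; (ii) $q < p$ forces $p + q - 2j < 2p \le 2d$ so there are fewer than $2d$ columns; and (iii) one should exploit that the degree bound $d-m$ decreases as $m$ increases, so summing $d-m$ over the top monomials is far below $(\#\text{columns})\cdot d$. Putting these together, the sum $\sum (d-m)$ over the $p+q-2j$ column indices is maximized (over the allowed range, for fixed small $j$) and a short case check — or simply the inequality $\sum_{m=0}^{p+q-2j-1}(d-m) \le \sum_{m=0}^{p-1}(d-m) \le pd \le d^2$ after noting the worst column set is dominated by $\{0,1,\dots,p-1\}$ shifted appropriately — yields the claimed bound $d^2$. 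I would then conclude that every coefficient of ${\sResP}_j(P,Q)$, being one such determinant, has $Y$-degree at most $d^2$.
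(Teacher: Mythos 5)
Your overall strategy is the same as the paper's (bound the $Y$-degrees of the entries of ${\rm SyHa}_j(P,Q)$, then expand the determinant $\det({\rm SyHa}_{j,i}(P,Q))$ as a sum of products with one entry per row and per column), but the key entry bound you use is false, and this is exactly where the content of the proof lies. The entry of ${\rm SyHa}_j(P,Q)$ sitting in the row $X^{\ell}P$ (resp.\ $X^{\ell}Q$) and in the column of $X^m$ is the coefficient of $X^{m-\ell}$ in $P$ (resp.\ $Q$), so its $Y$-degree is bounded by $d-m+\ell$, \emph{not} by $d-m$: the row shift $\ell$ cannot be dropped. Consequently the ``per-column'' sum $\sum_m (d-m)$ over the selected columns is not an upper bound for the $Y$-degree of the determinant. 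This is not a matter of fiddly arithmetic that can be arranged later: take $P = X^d + Y^d$ and $Q = YX^{d-1}+1$, so $p=d$, $q=d-1$ and both total degrees are $\le d$. For $j=i=0$ the selected columns are all of them, your bound gives $\sum_{m=0}^{2d-2}(d-m) = 2d-1$, yet $\sResP_0(P,Q) = \pm{\rm Res}_X(P,Q) = \pm\prod_{\alpha^d=-Y^d}\bigl(Y\alpha^{d-1}+1\bigr)$ has $Y$-degree exactly $d^2$. So the argument as written would prove a false statement. (The closing inequality you sketch, $\sum_{m=0}^{p+q-2j-1}(d-m)\le\sum_{m=0}^{p-1}(d-m)$, also fails whenever $p+q-2j>p$ and $p+q-2j \le d$, but that is secondary.)

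The repair is to keep the row shifts in the bookkeeping, which is what the paper does via the explicit formula for the matrix entries. Summing the correct entry bounds $d-m+\ell$, one entry per row and per column, the column part contributes $(d-i)+\sum_{m=j+1}^{p+q-j-1}(d-m)$ and the rows contribute $\binom{q-j}{2}+\binom{p-j}{2}$ (the sum of all shifts), and a short computation shows the total equals
$$
d(p+q-2j) - pq + j^2 + j - i,
$$
which is the paper's estimate. From there one concludes $d(p+q-2j)-pq+j^2+j-i \le d(p+q)-pq = d^2-(d-p)(d-q)\le d^2$, using $j\le q<p\le d$ and $i \ge 0$. Note that the subtracted term $-pq$ is essential: the crude count ``(number of columns)$\cdot d$'' only gives $(p+q)d$, which can be close to $2d^2$, so the savings must come from this combined row-and-column accounting rather than from the columns alone.
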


\begin{proof}{Proof:} Let $P = \sum_{0 \le i \le p}a_i(Y)X^i$ and 
$Q = \sum_{0 \le i \le q}b_i(Y)X^i$; then $\deg_Y a_i(Y), \deg_Y b_i(Y) \le d - i$.
By definition, 
$$
  {\sResP}_j (P, Q)=\sum_{0 \le i \le j} \det({\rm SyHa}_{j,i}(P, Q)) \cdot X^i
  \in \R[Y][X], 
$$
  where ${\rm SyHa}_{j,i}(P, Q) \in \R[Y]^{(p + q - 2j) \times 
(p + q - 2j)}$ is the matrix obtained by taking the
  $p+q-2j-1$ first
  columns 
  and the $(p+q-j-i)$-th column of  
  ${\rm SyHa}_j (P, Q)\in \R[Y]^{(p + q - 2j) \times 
(p + q - j)}$.
By defining $a_i(Y) = 0$ if $i \ge p+1$ or $i \le -1$ and $b_i(Y) = 0$ if $i \ge q+1$
or $i \le -1$ we have 
that for $1 \le k \le p + q - 2j$ and $1 \le \ell \le p + q - j$,
$$
\left({\rm SyHa}_{j}(P, Q)\right)_{k \ell} = 
\left\{
\begin{array}{ll}
a_{p + k - \ell}(Y) &  \hbox{if } k \le q-j,  \cr
b_{p +2q - 2j +1 -k - \ell }(Y) &  \hbox{if } k \ge q-j+1. \cr
\end{array}
\right.
$$
The proof can be completed by 
bounding the degree of
any possible nonzero product 
of entries of ${\rm SyHa}_{j,i}(P, Q)$ with
one element per row and column.
We obtain that the degree in $Y$ of the coefficient of $X^i$ 
in ${\sResP}_j(P, Q) \in \R[Y][X]$ is bounded by
$$d(p+q-2j) -pq + j^2 +j - i \le d(p+q) -pq \le d^2.$$
\end{proof}

\section{Counting complex roots}\label{sc:main}

In this section we introduce $(\sigma,\tau)$-chains, 
develop suitable 
generalizations
of results from \cite{Eis}
and 
prove Theorem \ref{th:main}.
As said before, till the end of the paper, we take a fixed value of 
$d \in \Z_{\ge 2}$ and 
we suppose that $(\R, \le)$ is an
ordered field satisfying $\IVT_{d^2}$ but not necessarily $\IVT$.

\subsection{$(\sigma,\tau)$-chains and Cauchy index}

A Sturm chain with respect to $I$ 
is a finite sequence of univariate polynomials
$(P_0, \dots, P_n) \in \R[X]$
such that for every $x\in I$ and $0 < i <n$, if  
 $P_i(x)=0$ then $P_{i-1}(x)P_{i+1}(x)<0$ (see \cite[Definition 3.10]{Eis}).

An important property of Sturm chains is its connection to Cauchy indices, given by Proposition \ref{prop:Cauchy_Ind_eval0} (\cite[Theorem 3.11]{Eis})

\begin{proposition}\label{prop:Cauchy_Ind_eval0}
Let 
$a, b \in \R$ with $a < b$, $I := [a, b]$,
$n \in \Z_{\ge 1}$.
If $(P_0, \dots, P_n)$ is a 
Sturm chain with respect to $I$, 
then
$$
\Ind_a^b(P_1,P_0) + 
\Ind_a^b(P_{n-1},P_n)
= 
\sum_{1 \le i \le n} 
{\rm Var}_a^b(P_{i-1},P_i)
$$
\end{proposition}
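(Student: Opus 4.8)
The plan is to reduce the statement to the inversion formula (Proposition~\ref{prop:inv_formula}) by a telescoping argument along the chain. First I would rewrite the left-hand side using the identity $\Ind_a^b(P_{i-1},P_i)+\Ind_a^b(P_i,P_{i-1})={\rm Var}_a^b(P_{i-1},P_i)$; this is exactly Proposition~\ref{prop:inv_formula}, valid since consecutive terms $P_{i-1},P_i$ of a Sturm chain have no common root in $I$ (if $P_i(x)=0$ and $P_{i-1}(x)=0$ for an interior index, then $P_{i-1}(x)P_{i+1}(x)<0$ forces $P_{i-1}(x)\neq 0$, a contradiction; the boundary indices need a separate easy check or the standard assumption that the chain has no common root, which I would state explicitly). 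Summing this over $1\le i\le n$ gives
$$
\sum_{1 \le i \le n}{\rm Var}_a^b(P_{i-1},P_i)
=
\sum_{1 \le i \le n}\Big(\Ind_a^b(P_{i-1},P_i)+\Ind_a^b(P_i,P_{i-1})\Big),
$$
so it suffices to show that the big sum on the right collapses to $\Ind_a^b(P_1,P_0)+\Ind_a^b(P_{n-1},P_n)$.

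The key step is the \emph{cancellation of interior terms}: I claim that for $0<i<n$,
$$
\Ind_a^b(P_{i+1},P_i)+\Ind_a^b(P_{i-1},P_i)=0.
$$
This is where the Sturm-chain hypothesis does the real work. The Cauchy index $\Ind_a^b(P_{i+1},P_i)$ only picks up contributions at roots $x$ of $P_i$ in $[a,b]$; at such a point the defining condition gives $P_{i-1}(x)P_{i+1}(x)<0$, so $P_{i-1}(x)$ and $P_{i+1}(x)$ are nonzero with opposite signs. Writing $P_i=(X-x)^{\mu}\widetilde{P_i}$ near $x$, the local index $\Ind_x(P_{i+1},P_i)$ is determined by $\sign(P_{i+1}(x)\widetilde{P_i}(x))$ (and the parity of $\mu$, since $P_{i\pm1}$ do not vanish at $x$ so their multiplicities there are $0$), and likewise $\Ind_x(P_{i-1},P_i)$ by $\sign(P_{i-1}(x)\widetilde{P_i}(x))$; since $\sign P_{i-1}(x)=-\sign P_{i+1}(x)$ these local indices are negatives of each other. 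Summing the local cancellations over all roots of $P_i$ in $(a,b)$, together with the matching cancellation of the boundary half-indices $\Ind_a^+$ and $-\Ind_b^-$ at $a$ and $b$ (using that $P_{i-1},P_{i+1}$ have opposite sign there as well, or vanish — again covered by the no-common-root convention), gives the claimed interior identity.

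With the interior cancellation in hand, the argument finishes by bookkeeping. Group the double sum as
$$
\sum_{1\le i\le n}\Ind_a^b(P_{i-1},P_i)+\sum_{1\le i\le n}\Ind_a^b(P_i,P_{i-1})
=\Ind_a^b(P_0,P_1)+\sum_{i=1}^{n-1}\Big(\Ind_a^b(P_{i+1},P_i)+\Ind_a^b(P_{i-1},P_i)\Big)+\Ind_a^b(P_{n-1},P_n),
$$
after reindexing the second sum (replace $i$ by $i+1$ in $\Ind_a^b(P_i,P_{i-1})$ to get $\Ind_a^b(P_{i+1},P_i)$) and isolating the two boundary terms $\Ind_a^b(P_0,P_1)$ (from $i=1$ of the first sum, which should instead be $\Ind_a^b(P_1,P_0)$ — I would be careful here: the surviving first-sum term at $i=1$ is $\Ind_a^b(P_0,P_1)$, but what we want is $\Ind_a^b(P_1,P_0)$, so the correct pairing is to keep $\Ind_a^b(P_1,P_0)$ from the second sum at $i=1$ and $\Ind_a^b(P_{n-1},P_n)$ from the first sum at $i=n$, letting everything else cancel in pairs). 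The middle sum vanishes by the interior identity, leaving exactly $\Ind_a^b(P_1,P_0)+\Ind_a^b(P_{n-1},P_n)$. The main obstacle I anticipate is not any deep idea but getting the index-shift and endpoint bookkeeping exactly right so the two surviving terms are $\Ind_a^b(P_1,P_0)$ and $\Ind_a^b(P_{n-1},P_n)$ and not their swapped versions; a clean way to avoid slips is to prove the single-step lemma $\Ind_a^b(P_{i-1},P_i)+\Ind_a^b(P_i,P_{i+1})={\rm Var}_a^b(P_{i-1},P_i)-\big(\text{something at }i\big)$ or, better, to just verify directly that $\sum_{i}{\rm Var}_a^b(P_{i-1},P_i)-\Ind_a^b(P_1,P_0)-\Ind_a^b(P_{n-1},P_n)$ equals a telescoping sum of interior quantities each of which is zero by the Sturm condition.
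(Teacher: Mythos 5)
Your proposal is correct and is essentially the route the paper takes: the paper itself only cites Eisermann for this proposition, but its proof of the generalization (Proposition \ref{prop:Cauchy_Ind_eval}) is an induction on $n$ whose unrolling is exactly your scheme --- the inversion formula (Proposition \ref{prop:inv_formula}) applied to each consecutive pair, together with the interior cancellation $\Ind_a^b(P_{i-1},P_i)+\Ind_a^b(P_{i+1},P_i)=0$ at roots of $P_i$, which is the paper's key step $\Ind_a^b(S_0,S_1)=-\sigma_1\tau_1\,\Ind_a^b(S_2,S_1)$ specialized to $\sigma_1\tau_1=1$. The caveat you flag is the right one: applying the inversion formula to the last pair does require that $P_{n-1}$ and $P_n$ have no common root on $I$ (precisely the extra condition built into the paper's definition of a Sturm $(\sigma,\tau)$-chain), and with that hypothesis your corrected endpoint bookkeeping closes the argument.
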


\begin{example}\label{exm:crucial0}
Here are important examples illustrating the definition of Sturm chains. 
Item b) plays a key role in the proof of the  Main Lemma (see \cite[Lemma 5.3]{Eis}) 
stating that  if a polynomial does not vanish in a rectangle, 
then the associated  winding number is zero, 
which is a key step of the algebraic-geometric 
proof of the Fundamental Theorem of Algebra in \cite{Eis}.
\begin{itemize}
\item[a)] Let $P_0,P_1 \in \R[X] \setminus \{0\}$ with $d_0:= \deg P_0 \ge 1$, $d_1 := \deg P_1  < d_0$
and $P_0, P_1$ coprime.   Let $(d_0, d_1, \dots, d_s)$ 
be the sequence of
degrees of the remainder sequence of $P_0,P_1$ in decreasing order.  We consider the classical Sturm sequence of $P_0,P_1$ : for $2 \le i \le s$, we define
$$P_{i}=-\Rem(P_{i-2},P_{i-1}).$$ 
 It is clear that the Sturm sequence
$(P_0,P_1,\ldots,P_s)$ is a Sturm chain.
\item[b)]
Given $P,Q \in \R[Y][X] \setminus \{0\}$, $p := \deg_X P \ge 1$, 
$q := \deg_X P < p$, 
$e$ the smallest even natural number greater than or 
equal to $p-q$ and $C \in \R[Y]$ the leading coefficient of $Q$.
We define  
$$\Prem (P, Q)=\Rem(C^e P, Q)\in \R[Y][X].$$
 Let $P_0,P_1 \in \R[Y][X] \setminus \{0\}$
with $d_0 := \deg_X P_0 \ge 1$, $d_1 := \deg_X P_1 < d_0$ and $P_0, P_1$ coprime  
in the unique factorization domain $\R[X, Y]$. 
Let $(d_0, d_1, \dots, d_s)$ be the sequence of
degrees in $X$ of the pseudo-remainders in decreasing order.
For $2 \le i \le s$, we define 
$$
P_i := -\Prem (P_{i-2}, P_{i-1}) \in \R[Y][X]
$$ 
and  for $1 \le i \le s$, $C_i\in \R[Y]$ the leading coefficient of $P_i$.
Since $P_0$ and $P_1$ are coprime in $\R[X,Y]$, we have that
$P_s=C_s \in \R[Y]$. 
Take an interval $[b,b']$ 
such that 
$C_1,\ldots, C_s$ have no zero on $[b, b']$. 
 We have that 
\begin{itemize}
\item for any $y \in [b, b']$, 
$(P_0(X,y), \dots, P_s(X,y)) \in \R[X]$ is a Sturm chain
with respect to $\R$,
\item for any $x \in 
\R$, 
$(P_0(x,Y), \dots, P_s(x,Y)) \in \R[Y]$ is Sturm chain
with respect to $[b, b']$.
\end{itemize}
\end{itemize}
\end{example}

We wish to use subresultants  rather than (pseudo)-remainder sequences to prove the Quantitative Main Lemma (Lemma \ref{prop:non_van_zero}), taking advantage of good degree bounds for subresultants (Proposition \ref{prop:degree_bound_res}). Unfortunately subresultants are not necessarily Sturm chains.

This is our motivation to introduce now the notion of $(\sigma, \tau)$-chain, which is a generalization 
of the 
notion
 of Sturm chain. Then, in Proposition \ref{prop:Cauchy_Ind_eval} and 
Corollary \ref{cor:good_Sturm_counting}, we develop a modified
sign changing counting rule, so that we can still use $(\sigma, \tau)$-chains
to compute Cauchy indices.

The benefit of this generalization is that
the subresultant polynomial sequence will fit in this definition for some pair 
$(\sigma, \tau)$, 
which is an essential ingredient for the proof of the Quantitative Main Lemma (Lemma \ref{prop:non_van_zero})
where we use the good degree bounds on subresultants
obtained in Proposition \ref{prop:degree_bound_res}.

\begin{defn}\label{defn:chain} Let 
$I$ be an interval of $\R$,
$n\in\Z_{\ge 1}$ and
$\sigma, \tau \in \{-1, 1\}^{n-1}$
with $\sigma = (\sigma_1, \dots, \sigma_{n-1})$ and 
$\tau = (\tau_1, \dots, \tau_{n-1})$.

A sequence of polynomials $(S_0, \dots, S_n) \in \R[X]$
is a $(\sigma, \tau)$-\emph{chain} with respect to $I$
if for $1 \le i \le n-1$ there exists polynomials $A_i, B_i, C_i \in \R[X]$
such that
\begin{enumerate}
\item \label{it:1}
$
A_iS_{i+1} + B_iS_i + C_iS_{i-1} = 0,
$
\item \label{it:2} for every $x \in I$, $\sign(A_i(x)) = \sigma_i$,
\item for every $x \in I$, $\sign(C_i(x)) = \tau_i$. 
\end{enumerate}
 
A sequence of polynomials $(S_0, \dots, S_n) \in \R[X]$
is a Sturm $(\sigma, \tau)$-chain with respect to $I$
if it is a $(\sigma, \tau)$-chain with respect to $I$
and $S_{n-1}$ and $S_n$ have no common root on $I$. 

A sequence of polynomials $(S_0, \dots, S_n) \in \R[X]$
is a \emph{good} Sturm $(\sigma, \tau)$-chain with respect to $I$
if it is a $(\sigma, \tau)$-chain with respect to $I$
and $S_n$ has no root on $I$. 

\end{defn}

Note that for $n = 1$, taking $\{-1, 1\}^0 = \{ \bullet \}$, 
any sequence $(S_0, S_1)$ in $\R[X]$ is a $(\bullet, \bullet)$-chain with 
respect to $I$.

Note also that if a sequence of polynomials $(S_0, \dots, S_n)$ in $\R[X]$
is a $(\sigma, \tau)$-chain with respect to $I$, then for every
$0 \le m \le n-1$,  $(S_m, \dots, S_n)$ is a 
$(\sigma', \tau')$-chain with respect to $I$, 
with $\sigma' := (\sigma_{m+1}, \dots, \sigma_{n-1})$ and
$\tau' := (\tau_{m+1}, \dots, \tau_{n-1})$.
The analogous statements hold also for Sturm $(\sigma, \tau)$-chains and
good Sturm $(\sigma, \tau)$-chains. 

\begin{example}\label{exm:crucial}
Here are important examples illustrating the definition of $(\sigma, \tau)$-chains. 
Item b) plays a key role in the proof of our Quantitative Main (Lemma \ref{prop:non_van_zero})  which is a crucial step in the proof of our Quantitative Fundamental Theorem of Algebra (Theorem \ref{th:main}).
\begin{itemize}
\item[a)] Let $S_0,S_1 \in \R[X] \setminus \{0\}$ with $d_0 := \deg S_0 \ge 1$, $d_1 := \deg S_1  < d_0$
and $S_0, S_1$ coprime.  Let $(d_0, d_1, \dots, d_s)$ 
be the sequence of
degrees of the non-defective subresultant polynomials of $S_0,S_1$ in decreasing order
(note that $d_0 = p$, $d_1 = q$), 
and $d_{-1} := p+1$. Finally, for $2 \le i \le s$, we define 
$$
S_i := \sResP_{d_{i-1}-1}(S_0, S_1) \in \R[X]
$$
(note that the above identity also holds for $i = 0, 1$). 
Since $S_0$ and $S_1$ are coprime in $\R[X]$, by the Structure Theorem of 
Subresultants (Theorem \ref{thm:structure_thm_subresultants}) we have that
$S_s \in \R$ and $d_s=0$. Also, defining
for $1 \le i \le s-1$
$$
\begin{array}{rcl}
A_i & := &
T_{d_{i-2}-1}(S_0, S_1) \cdot 
{\sRes}_{d_{i-1}}(S_0, S_1) \in \R \setminus \{0\}
\\[2mm]
B_i & := & -{\rm Quot}
\left(
T_{d_{i-1}-1}(S_0, S_1) \cdot 
{\sRes}_{d_{i}}(S_0, S_1)
\cdot S_{i-1}
 , 
S_i 
\right) 
\in \R[X], \\[2mm]
C_i & := & T_{d_{i-1}-1}(S_0, S_1) \cdot 
{\sRes}_{d_{i}}(S_0, S_1) \in \R \setminus \{0\}, 
\end{array}
$$
we have 
$$
A_iS_{i+1} + B_iS_i + C_iS_{i-1} = 0.
$$
We define $\sigma_i := \sign(A_i)$
and $\tau_i := \sign(C_i)$ for $1 \le i \le s-1$ and we have that 
$(S_0,S_1,\ldots,S_s)$ is a good Sturm $(\sigma,\tau)$-chain.

\item[b)] Let $S_0,S_1 \in \R[X, Y] \setminus \{0\}$
with $p := \deg_X S_0 \ge 1$, $q := \deg_X S_1 < p$ and $S_0, S_1$ coprime  
in the unique factorization domain $\R[X, Y]$. 
Let $(d_0, d_1, \dots, d_s)$ be the sequence of
degrees of the non-defective subresultant polynomials in decreasing order
(note that $d_0 = p$ and $d_1 = q$), 
and $d_{-1} := p+1$; where all the subresultants are 
defined considering $X$ as the main variable (this is to say, $\D = \R[Y]$ in 
Definition \ref{def:subres}).
Finally, for $2 \le i \le s$, we define 
$$
S_i := \sResP_{d_{i-1}-1}(S_0, S_1) \in \R[Y][X]
$$
(note that the above identity also holds for $i = 0, 1$). 
Since $S_0$ and $S_1$ are coprime in $\R[X,Y]$, by the Structure Theorem of 
Subresultants (Theorem \ref{thm:structure_thm_subresultants}) we have that
$S_s \in \R[Y]$. Also,
defining 
for $1 \le i \le s-1$
$$
\begin{array}{rcl}
A_i & := &
T_{d_{i-2}-1}(S_0, S_1) \cdot 
{\sRes}_{d_{i-1}}(S_0, S_1) \in \R[Y] \setminus\{0\}
\\[2mm] 
B_i & := & -{\rm Quot}
\left(
T_{d_{i-1}-1}(S_0, S_1) \cdot 
{\sRes}_{d_{i}}(S_0, S_1)
\cdot S_{i-1}
 , 
S_i 
\right) 
\in \R[Y][X], \\[2mm]
C_i & := & T_{d_{i-1}-1}(S_0, S_1) \cdot 
{\sRes}_{d_{i}}(S_0, S_1) \in \R[Y] \setminus \{0\},
\end{array}
$$
we have
$$
A_iS_{i+1} + B_iS_i + C_iS_{i-1} = 0.
$$
Take an interval $[b,b']$ 
such that 
$S_s$, $A_i$ and $C_i$, for $i=1,\ldots,s-1$, have constant sign different from $0$ on $[b, b']$. 
We  define then $\sigma = (\sigma_1, \dots, \sigma_{s-1})$, 
$\tau = (\tau_1, \dots, \tau_{s-1}) \in \{-1, 1\}^{s-1}$
by choosing any $c \in [b, b']$ and taking $\sigma_i := \sign(A_i(c))$
and $\tau_i := \sign(C_i(c))$ for $1 \le i \le s-1$.
In this way, since also $S_s \in \R[Y]$ does not vanish on $[b, b']$, we have that 
\begin{itemize}
\item for any $y \in [b, b']$, 
$(S_0(X,y), \dots, S_s(X,y)) \in \R[X]$ is a good Sturm $(\sigma, \tau)$-chain
with respect to $\R$,
\item for any $x \in 
\R$, 
$(S_0(x,Y), \dots, S_s(x,Y)) \in \R[Y]$ is a good Sturm $(\sigma, \tau)$-chain
with respect to $[b, b']$.
\end{itemize}
\end{itemize}
\end{example}

\begin{lemma}\label{lem:no_common_roots}
Let 
$I$ be an interval of $\R$,
$n\in\Z_{\ge 1}$ and $\sigma, \tau \in \{-1, 1\}^{n-1}$.
If a sequence of polynomials $(S_0, \dots, S_n)$ in $\R[X]$
is a Sturm $(\sigma, \tau)$-chain with respect to $I$, then for every
$1 \le m \le n$, $S_{m-1}$ and $S_m$ have no common root on $I$.
\end{lemma}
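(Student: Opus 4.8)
The plan is to argue by downward induction on $m$, propagating the ``no common root'' property from the top of the chain down to the bottom. The base case $m = n$ is immediate: it is exactly the extra requirement in the definition of a Sturm $(\sigma, \tau)$-chain (Definition \ref{defn:chain}) that $S_{n-1}$ and $S_n$ have no common root on $I$. Note this also disposes of the degenerate case $n = 1$ at once.

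For the inductive step, suppose $1 \le m \le n-1$ and that $S_m$ and $S_{m+1}$ have no common root on $I$ (the inductive hypothesis, i.e.\ the statement at index $m+1$). Assume, for contradiction, that some $x \in I$ is a common root of $S_{m-1}$ and $S_m$. I would then evaluate the relation from item \ref{it:1} of Definition \ref{defn:chain}, namely $A_m S_{m+1} + B_m S_m + C_m S_{m-1} = 0$, at $x$; since $S_m(x) = S_{m-1}(x) = 0$ this collapses to $A_m(x)\, S_{m+1}(x) = 0$. Because $\sign(A_m(x)) = \sigma_m \in \{-1, 1\}$ by item \ref{it:2}, we have $A_m(x) \ne 0$, hence $S_{m+1}(x) = 0$. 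Thus $x$ is a common root of $S_m$ and $S_{m+1}$, contradicting the inductive hypothesis. This contradiction shows $S_{m-1}$ and $S_m$ have no common root on $I$, completing the induction.

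There is essentially no serious obstacle here: the only ingredients used are the three-term relation, the fact that each $A_i$ does not vanish on $I$ (guaranteed by its constant nonzero sign $\sigma_i$), and the Sturm condition at the end of the chain as the anchor of the induction; the polynomials $C_i$ and the values of $\tau$ play no role. The one point to keep an eye on is the index bookkeeping: the relation of item \ref{it:1} is available only for $1 \le i \le n-1$, which is precisely the range of $m$ occurring in the inductive step, while the base case $m = n$ needs no such relation.
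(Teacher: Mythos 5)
Your proof is correct and follows essentially the same route as the paper: reverse induction from $m=n$ down, using the three-term relation of item \ref{it:1} together with the non-vanishing of $A_m$ (item \ref{it:2}) to show any common root of $S_{m-1}$ and $S_m$ would also be a root of $S_{m+1}$, contradicting the inductive hypothesis. Your write-up simply makes explicit the details the paper leaves to the reader.
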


\begin{proof}{Proof:}
The proof can be easily done by reverse induction on $m = n, \dots, 1$, 
taking into account that conditions \ref{it:1} and \ref{it:2} from 
Definition \ref{defn:chain} imply that for $m < n$, 
any common root of $S_{m-1}$ and $S_m$ 
would also be a root of $S_{m+1}$.
\end{proof}

We introduce some more useful notation.

\begin{notn}\label{not:V}
Let $a, b \in \R$, 
$n \in \Z_{\ge 1}$, 
$(S_0, \dots, S_n)$ in $\R[X]$
and $\sigma, \tau \in \{-1, 1\}^{n-1}$. We define
$$
\epsilon(\sigma,\tau)_i := \prod_{1 \le j \le i-1} \sigma_j \tau_j
$$
for $1 \le i \le n$ and
$$
{\rm Var}(
\sigma,\tau)_a^b(S_0, \dots, S_n) := \sum_{1 \le i \le n} 
\epsilon(\sigma,\tau)_i{\rm Var}_a^b(S_{i-1},S_i).
$$
\end{notn}

Note that it is always the case that $\epsilon(\sigma,\tau)_1 = 1$. 
.

\begin{proposition}\label{prop:Cauchy_Ind_eval}
Let 
$a, b \in \R$ with $a < b$, $I := [a, b]$,
$n \in \Z_{\ge 1}$ and  $\sigma, \tau \in \{-1, 1\}^{n-1}$.
If $(S_0, \dots, S_n)$ is a 
Sturm $(\sigma, \tau)$-chain with respect to $I$ and $\deg S_0, \dots, \deg S_n \le
d^2$, 
then
$$
\Ind_a^b(S_1,S_0) + 
\epsilon(\sigma,\tau)_n\Ind_a^b(S_{n-1},S_n)
= 
{\rm Var}(\sigma,\tau)_a^b(S_0, \dots, S_n).
$$
\end{proposition}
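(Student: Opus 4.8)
The plan is to induct on $n$, reducing the general $(\sigma,\tau)$-chain to the case $n=1$ where the statement is essentially Proposition~\ref{prop:inv_formula} (the refined inversion formula), via the observation that $(S_1,\dots,S_n)$ is again a Sturm $(\sigma',\tau')$-chain with shifted index vectors. For the base case $n=1$ we have $\epsilon(\sigma,\tau)_1 = 1$ and ${\rm Var}(\sigma,\tau)_a^b(S_0,S_1) = {\rm Var}_a^b(S_0,S_1)$, so the claimed identity reads $\Ind_a^b(S_1,S_0) + \Ind_a^b(S_0,S_1) = {\rm Var}_a^b(S_0,S_1)$, which is exactly Proposition~\ref{prop:inv_formula} (whose hypotheses are met: $\deg S_0, \deg S_1 \le d^2$ and, by Lemma~\ref{lem:no_common_roots}, $S_0$ and $S_1$ have no common root on $[a,b]$).

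For the inductive step, assume $n \ge 2$ and the statement for chains of length $n-1$. Apply the inductive hypothesis to $(S_1,\dots,S_n)$, which is a Sturm $(\sigma',\tau')$-chain with $\sigma' = (\sigma_2,\dots,\sigma_{n-1})$, $\tau' = (\tau_2,\dots,\tau_{n-1})$, all degrees bounded by $d^2$; note $\epsilon(\sigma',\tau')_i = \epsilon(\sigma,\tau)_{i+1}/(\sigma_1\tau_1) = \sigma_1\tau_1\,\epsilon(\sigma,\tau)_{i+1}$, so after multiplying the hypothesis for the shorter chain by $\sigma_1\tau_1$ one gets
\[
\sigma_1\tau_1\Ind_a^b(S_2,S_1) + \epsilon(\sigma,\tau)_n\Ind_a^b(S_{n-1},S_n) = \sum_{2 \le i \le n}\epsilon(\sigma,\tau)_i\,{\rm Var}_a^b(S_{i-1},S_i).
\]
It then remains to show
\[
\Ind_a^b(S_1,S_0) + \sigma_1\tau_1\Ind_a^b(S_2,S_1) = {\rm Var}_a^b(S_0,S_1),
\]
because adding this to the displayed identity and using $\epsilon(\sigma,\tau)_1 = 1$ yields the theorem. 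This reduced identity is the heart of the argument: it is a three-term relation governed by $A_1 S_2 + B_1 S_1 + C_1 S_0 = 0$ with $\sign A_1 = \sigma_1$, $\sign C_1 = \tau_1$ on $I$.

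To prove the reduced identity I would argue locally, using the affine-invariance and additivity of the Cauchy index (Remarks~\ref{rem:lin_resc}, \ref{rem:add_int}) to reduce to the case where the only possible root in $[a,b]$ of any of $S_0, S_1, S_2$ occurs at $a$. One then splits into cases according to which of $S_0(a), S_1(a)$ vanish (they cannot both vanish, by Lemma~\ref{lem:no_common_roots} applied to $(S_0,\dots,S_n)$ with $m=1$). When $S_1(a) \ne 0$ both Cauchy indices on the left vanish and one checks the sign-variation bookkeeping directly using Remark~\ref{rem:sign_easy}. When $S_1(a) = 0$ (so $S_0(a) \ne 0$), the relation $A_1 S_2 = -B_1 S_1 - C_1 S_0$ evaluated near $a$ forces $S_2(a) \ne 0$ (since $S_0(a)\ne 0$, $A_1(a) \ne 0$); writing $S_1 = (X-a)^\mu \widetilde S_1$ one computes $\Ind_a^b(S_1,S_0) = \tfrac12\sign(\widetilde S_1(a)S_0(a))$ and, from the three-term relation, a matching expression for $\sigma_1\tau_1\Ind_a^b(S_2,S_1)$, with the relation $A_1(a)S_2(a) = -C_1(a)S_0(a)$ (valid since the $S_1$-term drops out at $a$ after dividing by the appropriate power of $(X-a)$, using $\mu \ge 1$) supplying the sign identity $\sigma_1\sigma_2(a)$-vs-$\tau_1\sigma_0(a)$ that makes the two sides agree. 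The main obstacle I anticipate is precisely this sign-chasing in the case $S_1(a) = 0$: one must carefully track the leading behaviour of $S_1$ and $S_2$ at $a$ and verify that the factor $\epsilon(\sigma,\tau)$ contributes exactly the sign $\sigma_1\tau_1$ needed to reconcile $\Ind_a^b(S_2,S_1)$ with the ${\rm Var}_a^b(S_0,S_1)$ term — this is the point where the generalization beyond ordinary Sturm chains (where $A_i = C_i = \pm 1$ automatically) actually costs something, and the degree bound $\deg S_i \le d^2$ is what licenses the use of Proposition~\ref{prop:inv_formula} throughout.
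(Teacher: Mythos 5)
Your inductive skeleton is the same as the paper's (base case via Proposition \ref{prop:inv_formula}, inductive hypothesis applied to the tail chain $(S_1,\dots,S_n)$ using $\epsilon(\sigma,\tau)_i=\sigma_1\tau_1\,\epsilon(\sigma',\tau')_{i-1}$), but the key reduction has a sign error that makes the remaining identity false as stated. Subtracting the rescaled inductive hypothesis from the desired identity leaves $\Ind_a^b(S_1,S_0)-\sigma_1\tau_1\Ind_a^b(S_2,S_1)={\rm Var}_a^b(S_0,S_1)$, with a minus sign; the identity you propose, $\Ind_a^b(S_1,S_0)+\sigma_1\tau_1\Ind_a^b(S_2,S_1)={\rm Var}_a^b(S_0,S_1)$, is not true, and adding it to the rescaled inductive hypothesis leaves an extra term $2\sigma_1\tau_1\Ind_a^b(S_2,S_1)$ rather than yielding the theorem. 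Concretely, for the classical Sturm chain $S_0=X^2-2$, $S_1=X$, $S_2=2$ (so one may take $A_1=C_1=1$, $B_1=-X$, hence $\sigma_1=\tau_1=1$) on $[a,b]=[-2,2]$, one has $\Ind_a^b(S_1,S_0)=2$, $\Ind_a^b(S_2,S_1)=1$ and ${\rm Var}_a^b(S_0,S_1)=1$, so your version reads $3=1$, while the correctly signed version reads $2-1=1$.

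The sketched case analysis also misattributes which roots contribute to which index: $\Ind_a^b(S_1,S_0)$ receives contributions only at roots of its second argument $S_0$, so in your case $S_1(a)=0$, $S_0(a)\ne 0$ that index contributes nothing at $a$, and the quantity $\frac12\sign(\widetilde S_1(a)S_0(a))$ you assign to it is really a contribution to $\Ind_a^b(S_0,S_1)$; likewise, when $S_1(a)\ne 0$ but $S_0(a)=0$, the left-hand indices do not both vanish. The clean way to get the correctly signed reduced identity --- and this is exactly what the paper does --- is to note that at every root $x$ of $S_1$ in $I$ the relation $A_1S_2+B_1S_1+C_1S_0=0$ gives $C_1(x)S_0(x)=-A_1(x)S_2(x)\ne 0$, hence $\sign(S_0(x))=-\sigma_1\tau_1\sign(S_2(x))$; since both $\Ind_a^b(S_0,S_1)$ and $\Ind_a^b(S_2,S_1)$ are sums of local contributions at roots of $S_1$, this yields $\Ind_a^b(S_0,S_1)=-\sigma_1\tau_1\Ind_a^b(S_2,S_1)$, and the reduced identity then follows from the inversion formula $\Ind_a^b(S_1,S_0)+\Ind_a^b(S_0,S_1)={\rm Var}_a^b(S_0,S_1)$, which applies because $\deg S_0,\deg S_1\le d^2$ and, by Lemma \ref{lem:no_common_roots}, $S_0$ and $S_1$ have no common root on $I$. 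With that correction your induction closes exactly as in the paper; as written, the central step both asserts a false identity and rests on a miscomputation of the local Cauchy indices.
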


Note that the identity in Proposition \ref{prop:Cauchy_Ind_eval0} (\cite[Theorem 3.11]{Eis}) is exactly the identity
in 
Proposition \ref{prop:Cauchy_Ind_eval} in the particular case $\sigma = \tau = (1, 1, \dots, 1)$

\begin{proof}{Proof of Proposition \ref{prop:Cauchy_Ind_eval}:} 
By Lemma \ref{lem:no_common_roots}, 
we know that for every
$1 \le m \le n$, $S_{m-1}$ and $S_m$ have no common root on $I$.
We proceed then by induction on $n$. For $n = 1$, the result holds by Proposition 
\ref{prop:inv_formula}.

Now we take $n \ge 2$. 
Let $x$ be a root of $S_1$ on $I$ (and therefore $x$ is not a root neither of $S_0$ nor of $S_2$). The identity 
$$
A_1S_2 + B_1S_1 + C_1S_0 = 0
$$
implies that 
$ C_1(x)S_0(x) = -A_1(x)S_2(x) \ne 0$ and then 
$  \sign(S_0(x)) = -\sigma_1 \tau_1\sign(S_2(x))$. 
From this we deduce
$$
\Ind_a^b(S_0,S_1) = -\sigma_1 \tau_1\Ind_a^b(S_2,S_1).
$$

We consider
$\sigma' := (\sigma_2, \dots, \sigma_{n-1})$, 
$\tau' := (\tau_2, \dots, \tau_{n-1})$
and we apply the inductive hypothesis to the Sturm $(\sigma', \tau')$-chain 
$(S_1, \dots, S_n)$. 
For $2 \le i \le n$ we have that 
$\epsilon(\sigma,\tau)_i = \sigma_1 \tau_1 \epsilon(\sigma',\tau')_{i-1}$. 

Finally, using Proposition \ref{prop:inv_formula},
$$
\begin{array}{cl}
& \Ind_a^b(S_1,S_0) + 
\epsilon(\sigma,\tau)_n\Ind_a^b(S_{n-1},S_{n}) \\[2mm]
=  &  \Ind_a^b(S_1,S_0) + 
\Ind_a^b(S_0,S_1) 
+ \sigma_1\tau_1 \Ind_a^b(S_2,S_1) +
\sigma_1\tau_1 \epsilon(\sigma',\tau')_{n-1}\Ind_a^b(S_{n-1},S_{n}) \\[2mm]
 = &{\rm Var}_a^b(S_0,S_1)
+ \sigma_1\tau_1 {\rm Var}(\sigma',\tau')_a^b(S_1, \dots, S_n)\\[2mm]
 = & {\rm Var}(\sigma,\tau)_a^b(S_0, \dots, S_n) 
\end{array}
$$
as we wanted to prove. 
\end{proof}

\begin{corollary}\label{cor:good_Sturm_counting}
Let 
$a, b \in \R$ with $a < b$, $I := [a, b]$,
$n \in \Z_{\ge 1}$ and  $\sigma, \tau \in \{-1, 1\}^{n-1}$. 
If $(S_0, \dots, S_n)$ is a 
good Sturm $(\sigma, \tau)$-chain with respect to $I$ and $\deg S_0, \dots, \deg S_n \le
d^2$, 
then
$$
\Ind_a^b(S_1,S_0) 
= {\rm Var}(\sigma, \tau)_a^b(S_0, \dots, S_n).
$$
\end{corollary}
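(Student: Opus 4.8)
The plan is to deduce this directly from Proposition \ref{prop:Cauchy_Ind_eval}. First I would observe that a good Sturm $(\sigma,\tau)$-chain with respect to $I$ is in particular a Sturm $(\sigma,\tau)$-chain with respect to $I$: since $S_n$ has no root on $I$, the polynomials $S_{n-1}$ and $S_n$ certainly have no common root on $I$, which is exactly the extra condition in the definition of a Sturm $(\sigma,\tau)$-chain. The degree hypothesis $\deg S_0, \dots, \deg S_n \le d^2$ is inherited verbatim, so all hypotheses of Proposition \ref{prop:Cauchy_Ind_eval} are met and we obtain
$$
\Ind_a^b(S_1,S_0) + \epsilon(\sigma,\tau)_n\,\Ind_a^b(S_{n-1},S_n) = {\rm Var}(\sigma,\tau)_a^b(S_0, \dots, S_n).
$$

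The second step is to show that the correction term $\Ind_a^b(S_{n-1},S_n)$ vanishes. By Definition \ref{defn:CI_at_a_point}, for any $x \in I$ the quantity $\Ind_x^{\varepsilon}(S_{n-1},S_n)$ is nonzero only when the order of vanishing of the second polynomial $S_n$ at $x$ strictly exceeds that of $S_{n-1}$; since $S_n$ does not vanish anywhere on $I$, this never happens, so $\Ind_x(S_{n-1},S_n) = 0$ for every $x \in I$, and likewise the boundary contributions $\Ind_a^{+}(S_{n-1},S_n)$ and $\Ind_b^{-}(S_{n-1},S_n)$ vanish. Hence $\Ind_a^b(S_{n-1},S_n) = 0$, and substituting into the displayed identity gives exactly $\Ind_a^b(S_1,S_0) = {\rm Var}(\sigma,\tau)_a^b(S_0, \dots, S_n)$.

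There is essentially no obstacle here: the corollary is a clean specialization of Proposition \ref{prop:Cauchy_Ind_eval}, the only point to check being that the single hypothesis ``$S_n$ has no root on $I$'' simultaneously makes the chain a Sturm $(\sigma,\tau)$-chain (so the proposition applies) and forces the extra Cauchy index term to be zero (so the correction disappears). The degree bound $d^2$ plays no role in the vanishing argument; it is needed only inside Proposition \ref{prop:Cauchy_Ind_eval}, through the refined inversion formula of Proposition \ref{prop:inv_formula}.
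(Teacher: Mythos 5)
Your argument is correct and matches the paper's proof: the paper likewise notes that since $S_n$ has no root on $I$ the extra term $\Ind_a^b(S_{n-1},S_n)$ vanishes and then invokes Proposition \ref{prop:Cauchy_Ind_eval}. You merely spell out the (easy) checks the paper leaves implicit, namely that a good Sturm $(\sigma,\tau)$-chain is a Sturm $(\sigma,\tau)$-chain and why the Cauchy index term is zero pointwise.
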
  
\begin{proof}{Proof:}
Since  $(S_0, \dots, S_n)$ is a 
good Sturm $(\sigma, \tau)$-chain with respect to $I$, $S_n$ has no roots on $I$ and 
$$ \Ind_a^b(S_{n-1},S_{n}) = 0,$$
therefore the claim holds by Proposition \ref{prop:Cauchy_Ind_eval}, 
\end{proof}

\subsection{Quantitative Main Lemma}
Our next goal is to 
prove a 
quantitative adaptation 
 of the Main Lemma (see \cite[Lemma 5.3]{Eis}): using
 $\IVT_{d^2}$ and subresultants we want to prove that
if 
 $F \in \C[X, Y]$ with $\deg F\le d$ does not vanish on $\Gamma$,
then $w(F \, | \, \partial \Gamma) = 0$. 

In order to be able to work with subresultants, we need to consider
separately for $F \in \C[X, Y]$
the degrees with respect to $X$ and $Y$ of 
$F_{\rm re}$
 and 
 $F_{\rm im},$ 
and we wish each of these two degrees of
$F_{\rm re}$
to drop with respect
to the respective  degree of
$F_{\rm im}$.
Since
$$
(iF)_{\rm re} = - F_{\rm im} \ \ \hbox{ and } \ \ (iF)_{\rm im} = F_{\rm re}, 
$$
up to multiplication by $i$, 
it will be enough for our purposes if these degrees are different. 
We will also need some degree control on some auxiliary subresultant polynomials
which will 
play a key role in our proof.
For these reasons, we introduce the 
following definition.

\begin{defn}\label{defn:well_controlled}
Let $F \in \C[X, Y]$.
We say that $F$ is \emph{well-controlled} if
the following conditions are satisfied:
\begin{enumerate}
\item 
$F_{\rm re}, F_{\rm im} \ne 0$,
\item $\deg_X F_{\rm im} \ne \deg_X F_{\rm re}$ and $\deg_Y F_{\rm im} \ne \deg_Y F_{\rm re}$. 
\end{enumerate}
For a well-controlled $F$, we denote by $F^X$ the unique polynomial in $\{F, iF\} \subset \C[X, Y]$ 
such that $\deg_X  F^X_{\rm im} > \deg_X F^X_{\rm re}$.
Similarly, we denote by $F^Y$ the unique polynomial in $\{F, iF\} \subset \C[X, Y]$ 
such that 
$\deg_Y  F^Y_{\rm im} > \deg_Y F^Y_{\rm re}$.
\end{defn}

\begin{example} \label{exmple:monic_well_controlled}
Let $F \in \C[Z] \setminus \C$ with $F$ monic.
Then $F$ is well-controlled. 
\end{example}

Now we are ready to prove our 
quantitative version of the Main Lemma \cite[Lemma 5.3]{Eis}. 
We stress the fact that it is
in the proof of the Quantitative Main Lemma (Lemma \ref{prop:non_van_zero}) that
subresultant polynomials play a key role to keep control of the 
degree of intermediate auxiliary polynomials.
We need to use the hypothesis $\IVT_{d^2}$ for proving the result for a polynomial of degree $\le d$. 

\begin{lemma}(Quantitative Main Lemma)\label{prop:non_van_zero}
Suppose that $(\R, \le)$ is an
ordered field satisfying $\IVT_{d^2}$.
Let $\Gamma := [x_0, x_1] \times [y_0, y_1] \subset \R^2$ and $F\in \C[X,Y]$
with $\deg F\le d$ and
such that $F$ does not vanish on $\Gamma$.
If $F$ is well-controlled, then $w(F \, | \, \partial \Gamma) = 0$. 
\end{lemma}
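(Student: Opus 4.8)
The plan is to mimic Eisermann's cancellation argument for the Main Lemma, but with the (pseudo-)remainder sequence replaced by the subresultant sequence of Example~\ref{exm:crucial}~b), and then to invoke Corollary~\ref{cor:good_Sturm_counting} in place of Proposition~\ref{prop:Cauchy_Ind_eval0}. First I would reduce to the case where $F$ is well-controlled, which is given, and single out $S_0^X := F^X_{\rm im}$ and $S_1^X := F^X_{\rm re}$; since $F$ does not vanish on $\Gamma$, the two polynomials $F_{\rm re}$ and $F_{\rm im}$ have no common zero on $\Gamma$, and in particular $S_0^X, S_1^X$ are coprime in $\R[X,Y]$ (a common factor would cut out a curve, hence have a point in the rectangle after possibly shrinking — this needs $\IVT_{d^2}$-style sign reasoning). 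Then the subresultant chain $(S_0,\dots,S_s)$ from Example~\ref{exm:crucial}~b) is available, and by Proposition~\ref{prop:degree_bound_res} every $S_j$ has degree in $Y$ bounded by $d^2$, and (with $X$ as main variable) degree in $X$ bounded by $d \le d^2$; similarly the auxiliary polynomials $A_i, C_i \in \R[Y]$ have degree $\le d^2$ in $Y$. This is exactly where the quantitative hypothesis $\IVT_{d^2}$ will be used, via Corollary~\ref{cor:good_Sturm_counting}.

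Next I would partition the interval $[x_0,x_1]$ (resp. $[y_0,y_1]$) into finitely many subintervals on whose closures each of the finitely many relevant polynomials in one variable — the leading coefficients $A_i, C_i$, and the last subresultant $S_s$ — has constant sign, which is legitimate since all these have degree $\le d^2$ and $\IVT_{d^2}$ forces the ``no root implies constant sign'' property in that degree range; then by Lemma~\ref{lem:grid_partition} it suffices to prove $w(F\,|\,\partial\Gamma')=0$ for each cell $\Gamma'$ of the resulting grid. Fixing such a cell, I would then apply the two bullet points of Example~\ref{exm:crucial}~b): for each fixed $y$ in the cell the specialized sequence is a good Sturm $(\sigma,\tau)$-chain in $X$, and for each fixed $x$ it is a good Sturm $(\sigma,\tau)$-chain in $Y$ — and crucially the \emph{same} signs $\sigma,\tau$ govern both specializations, because they are read off from the signs of $A_i,C_i$ which are constant on the whole cell.

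Now I would write $w(F\,|\,\partial\Gamma')$ via Definition~\ref{def:winding_number} as one half the sum of four Cauchy indices, convert each $\Ind$ of the form $\Ind_a^b(S_1,S_0)$ on a horizontal or vertical side into a sum of $\epsilon(\sigma,\tau)_i\,{\rm Var}_a^b(S_{i-1},S_i)$ terms by Corollary~\ref{cor:good_Sturm_counting} (taking $F = F^X$ on the horizontal sides and $F = F^Y$ on the vertical sides, which changes $w$ by $0$ since multiplication by $i$ does not, cf. the convention after Definition~\ref{def:winding_number} — or rather, one should check that $w(iF\,|\,\partial\Gamma') = w(F\,|\,\partial\Gamma')$, which follows since it holds for all $z$-translates in Example~\ref{exm:wind_linear} and by Proposition~\ref{prop:mult_wind}; more directly, the winding number of $iF$ equals that of $F$ because replacing $(F_{\rm re},F_{\rm im})$ by $(-F_{\rm im},F_{\rm re})$ corresponds to a rotation of the curve). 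The sum over the four sides telescopes: each ${\rm Var}$ term is evaluated at the two endpoints of a side, i.e. at the four corners of $\Gamma'$, and going around $\partial\Gamma'$ each corner is visited by two adjacent sides with opposite orientation, so all corner contributions cancel in pairs and the total is $0$.

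\textbf{Main obstacle.}
The hard part will be the bookkeeping in the cancellation: one must check that when the four Cauchy indices are expanded, the $(\sigma,\tau)$-weighted sign-variation terms really do cancel corner by corner, which requires that on each side the chain used is a genuine good Sturm $(\sigma,\tau)$-chain \emph{with the same $\sigma,\tau$ as on the adjacent sides} — this is the ``crucial property'' emphasized in the introduction, namely that the subresultant sequence specializes to $(\sigma,\tau)$-chains both in $X$ and in $Y$, and it is the reason Definition~\ref{defn:chain} was set up to allow sign data $\sigma,\tau$ rather than insisting on the rigid Sturm condition. A secondary delicate point is the reduction to coprime $S_0,S_1$ and the handling of the well-controlled hypothesis: one needs $\deg_X F_{\rm re}\ne\deg_X F_{\rm im}$ (after multiplying by $i$ if needed) so that Definition~\ref{def:subres}'s hypothesis $\deg Q < \deg P$ applies, and one must make sure that the finitely many bad values of $y$ (zeros of the $A_i,C_i,S_s$) are isolated by the grid, using only $\IVT_{d^2}$ and the degree bound $d^2$ from Proposition~\ref{prop:degree_bound_res}, never full $\IVT$.
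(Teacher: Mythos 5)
Your overall strategy (subresultants plus Corollary \ref{cor:good_Sturm_counting} instead of remainder sequences, then a grid and corner cancellation) is the right one, but two steps would fail as written. The main gap is the grid construction: a partition of $[y_0,y_1]$ into closed subintervals on whose \emph{closures} $A_i$, $C_i$ and $S_s$ all have constant nonzero sign simply does not exist once one of these polynomials has a root in the interval — the two cells meeting at that root carry it on a horizontal side, and on that side the specialized sequence is not a good Sturm $(\sigma,\tau)$-chain (conditions 2--3 of Definition \ref{defn:chain}, or the non-vanishing of $S_s$, fail exactly there), so Corollary \ref{cor:good_Sturm_counting} cannot be applied on it; the same happens in the $x$-direction. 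Such bad values do occur: $S_s$ is essentially a resultant and vanishes whenever the specialized pair acquires a common \emph{complex} root, which the non-vanishing of $F$ on $\Gamma$ does not exclude. The paper's proof resolves this with a different grid: it collects the bad ordinates $\cY$ (including $y_0,y_1$ and the $y$ with $F^X_{\rm re}(X,y)=0$ or $F^X_{\rm im}(X,y)=0$) and bad abscissae $\cX$, forms $\cZ=\cX\times\cY$, chooses $\delta$ via Proposition \ref{prop:non_van_zero_local} around each point of $\cZ$, and cuts along $a_i$, $a_i\pm\delta$, $b_j$, $b_j\pm\delta$; then every cell either contains exactly one point of $\cZ$ (handled by Proposition \ref{prop:non_van_zero_local}) or has its $x$-range disjoint from $\cX$ or its $y$-range disjoint from $\cY$ (handled by the chain argument). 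Your proposal never invokes Proposition \ref{prop:non_van_zero_local}, and without it the cells touching the bad lines cannot be treated.

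The second problem is your use of $F^X$ on the horizontal sides and $F^Y$ on the vertical sides of the \emph{same} cell: that sum of four Cauchy indices is not $w(F\,|\,\partial\Gamma')$ (the indices belong to two polynomials which may differ by a factor $i$), and the corner-by-corner cancellation breaks down, because at a shared corner the two adjacent sides would contribute ${\rm Var}$-values of two \emph{different} chains (the $X$-subresultant chain and the $Y$-subresultant chain), which need not agree. The actual mechanism — this is the content of the two bullets of Example \ref{exm:crucial} b) — is that for a cell whose $y$-range avoids $\cY$ the single chain with main variable $X$ specializes to good Sturm $(\sigma,\tau)$-chains on \emph{all four} sides (in $X$ for fixed $y\in\{b,b'\}$, in $Y$ on $[b,b']$ for fixed $x\in\{a,a'\}$), so all four Cauchy indices of $F^X$ become ${\rm Var}(\sigma,\tau)$ sums of the same bivariate chain evaluated at the four corners and telescope to zero; $F^Y$ and its chain are used only for the complementary type of cell, again on all four sides at once. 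Finally, your coprimality claim is false as argued: $F$ non-vanishing on $\Gamma$ (even on all of $\R^2$) does not make $F_{\rm re}$ and $F_{\rm im}$ coprime — take $F=(1+i)(X^2+Y^2+1)$ — since a common factor need not have real zeros. The paper instead factors out a gcd $G$ and runs the chain on the cofactors $S_0,S_1$; transferring the Cauchy indices of $F^X$ on a side to those of $(S_1,S_0)$ then requires $G$ not to vanish identically on that side, which is exactly why the set $\cY_3$ is included among the bad ordinates.
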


\begin{proof}{Proof:} 
We will produce in several steps
a suitable grid partition of $\Gamma$ into a finite number of rectangles 
$\Gamma_1, \dots, \Gamma_s$ and we will prove that $w(F \, | \, \partial \Gamma_i) = 0$
for $1 \le i \le s$. Then the result  will follow from Lemma \ref{lem:grid_partition}.
Let  
$G$ be
a greatest common divisor 
of 
$F_{\rm re}$ and $F_{\rm im}$
 in the unique factorization domain $\R[X, Y]$
and let $F^X, F^Y \in \C[X, Y]$ be as in Definition \ref{defn:well_controlled}.

\textit{First step.} 
We decompose
$
F^X_{\rm im} = G S_0$ and $F^X_{\rm re} = G S_1$ and 
note that 
we have  $\deg S_0, \deg S_1 \le d$ and 
$$d\ge p := \deg_X S_0 > q := \deg_X S_1.$$

We consider $S_0$ and $S_1$ as elements of $\R[Y][X]$ and
we take the subresultant polynomial sequence
with respect to 
the variable $X$ (as in Section \ref{sec:subres}, taking $\D = \R[Y]$)
$$
\sResP_{p}(S_0, S_1) = S_0, \sResP_{p-1}(S_0, S_1) = S_1, 
\dots, \sResP_{0}(S_0, S_1) \in \R[Y][X].
$$
For $0 \le j \le p$ and 
$0 \le i \le j$, the degree in $Y$ of the coefficient of $X^i$ 
in ${\sResP}_j(S_0, S_1) \in \R[Y][X]$ is bounded by $d^2$.
This is so by Proposition \ref{prop:degree_bound_res} for $
0 \le j \le q$ and by definition of subresultant polynomials 
for $q < j \le p$.

We take $(d_0, d_1, \dots, d_s)$ as the sequence of
degrees of the non-defective subresultant polynomials in decreasing order
(note that $d_0 = p$ and $d_1 = q$), 
and $d_{-1} := p+1$, and we define  $\cY_1$ as the  union of the sets of roots in $[y_0, y_1]$ of the polynomials 
$S_s, T_{d_{-1}-1}(S_0, S_1), \dots,$ $T_{d_{s-2}-1}(S_0, S_1), 
{\sRes}_{d_{0}}(S_0, S_1), \dots,$ ${\sRes}_{d_{s-1}}(S_0, S_1)
\in \R[Y] \setminus\{0\}$
(note that $T_{d_{-1}-1}(S_0, S_1)$ $=$ ${\sRes}_{d_{0}}(S_0, S_1)$ $=$ $1$, 
therefore these two polynomials actually add no roots to the set $\cY_1$; note also 
that in the particular case $s = 1$, $\cY_1$ is just the set of roots in 
$[y_0, y_1]$ of the polynomial $S_1$).

For uniformity reasons in exposition we define $\cY_2 := \{y_0, y_1\}$
and we also define 
$$
\cY_3 := \{y \in [y_0, y_1] \ | \ F^X_{\rm re}(X,y) = 0 \in \R[X] 
\hbox{ or } F^X_{\rm im}(X,y) = 0 \in \R[X] \}.
$$
Finally, we define
$$
\cY := \cY_1 \cup \cY_2 \cup \cY_3.
$$
We think of $\cY$ as the set of the $Y$-coordinates of bad behaving
points in $\Gamma$. 
Suppose $\cY = \{b_1, \dots, b_{\ell}\}$ with $y_0 = b_1 < \dots < b_{\ell} = y_1$.

\textit{Second step.} We proceed as in the first step, but replacing polynomial 
$F^X$ by $F^Y$ and the roles of variables $X$ and $Y$, to produce a
set $\cX \subset [x_0, x_1]$, which we think of as the set of the 
the $X$-coordinates of bad behaving
points in $\Gamma$. 
Suppose $\cX = \{a_1, \dots, a_{k}\}$ with $x_0 = a_1 < \dots < a_{k} = x_1$.

\textit{Third step.}
We take $\cZ := \cX \times \cY \subset \Gamma$. 
For each $z =(a,b) \in \cZ$, since $F(a,b) \ne 0$, 
by Proposition \ref{prop:non_van_zero_local}
there exist $\delta_z > 0$ such that the winding number of $F$ vanishes on 
any rectangle contained in $[a - \delta_z, a + \delta_z] \times  
[b - \delta_z, b + \delta_z]$. So we take $\delta > 0$, with $\delta \le \delta_z$
for every $z \in \cZ$ and such that
$$
x_0 = a_1 < a_1 + \delta < a_2 - \delta < a_2 + \delta < a_3-\delta < \dots 
< a_{k-1} + \delta < a_k - \delta < a_k = x_1
$$
and
$$
y_0 = b_1 < b_1 + \delta < b_2 - \delta < b_2 + \delta < b_3-\delta < \dots 
< b_{\ell-1} + \delta < b_{\ell} - \delta < b_{\ell} = y_1.
$$
We divide intervals $[x_0, x_1]$ and $[y_0, y_1]$ using 
all these numbers above, and finally we use 
these divisions of these intervals to obtain a grid partition of $\Gamma
= [x_0, x_1] \times [y_0, y_1]$.

\begin{center}
\begin{tikzpicture}
      \draw[line width=0.9pt,-] (-4,-2.3) -- (-4,2.8);
      \draw[line width=0.9pt,-] (-3.7,-2.3) -- (-3.7,2.8);
      
      \draw[line width=0.8pt,-] (-2.6,-2.3) -- (-2.6,2.8);
      \draw[line width=0.8pt,-] (-2.3,-2.3) -- (-2.3,2.8);
      \draw[line width=0.8pt,-] (-2.0,-2.3) -- (-2.0,2.8);
      
      \draw[line width=0.8pt,-] (-1.3,-2.3) -- (-1.3,2.8);
      \draw[line width=0.8pt,-] (-1.0,-2.3) -- (-1.0,2.8);
      \draw[line width=0.8pt,-] (-0.7,-2.3) -- (-0.7,2.8);
      
      \draw[line width=0.8pt,-] (0.1,-2.3) -- (0.1,2.8);
      \draw[line width=0.8pt,-] (0.4,-2.3) -- (0.4,2.8);
      \draw[line width=0.8pt,-] (0.7,-2.3) -- (0.7,2.8);
      
      \draw[line width=0.8pt,-] (2.1,-2.3) -- (2.1,2.8);
      \draw[line width=0.8pt,-] (2.4,-2.3) -- (2.4,2.8);
      \draw[line width=0.8pt,-] (2.7,-2.3) -- (2.7,2.8);
            
      \draw[line width=0.8pt,-] (3.7,-2.3) -- (3.7,2.8);
      \draw[line width=0.8pt,-] (4,-2.3) -- (4,2.8);

     \node at (-4.5,-3.2) {$x_0 = a_1$};
     \draw[->, dashed] (-4, -2.9) -- (-4, -2.4);

     \node at (-3.8,3.6) {$a_1 + \delta $};
     \draw[->, dashed] (-3.7, 3.4) -- (-3.7, 2.9);

     \node at (-2.3,-3.2) {$a_2$};
     \draw[->, dashed] (-2.3, -2.9) -- (-2.3, -2.4);

     \node at (-1.1,-3.2) {$a_3$};
     \draw[->, dashed] (-1.0, -2.9) -- (-1.0, -2.4);
     
     \node at (2.4,-3.2) {$a_{k-1}$};
     \draw[->, dashed] (2.4, -2.9) -- (2.4, -2.4);

     \node at (3.8,3.6) {$a_k - \delta $};
     \draw[->, dashed] (3.7, 3.4) -- (3.7, 2.9);

     \node at (4.5,-3.2) {$a_k = x_1$};
     \draw[->, dashed] (4, -2.9) -- (4, -2.4);

      \draw[line width=0.8pt,-] (-4,-2.3) -- (4,-2.3);
      \draw[line width=0.8pt,-] (-4,-2) -- (4,-2);
     
      \draw[line width=0.8pt,-] (-4, -1.4) -- (4,-1.4);
      \draw[line width=0.8pt,-] (-4, -1.1) -- (4,-1.1);
      \draw[line width=0.8pt,-] (-4, -0.8) -- (4,-0.8);
      
      \draw[line width=0.8pt,-] (-4, -0.2) -- (4,-0.2);
      \draw[line width=0.8pt,-] (-4, 0.1) -- (4,0.1);
      \draw[line width=0.8pt,-] (-4, 0.4) -- (4,0.4);
      
      \draw[line width=0.8pt,-] (-4, 1.2) -- (4,1.2);
      \draw[line width=0.8pt,-] (-4, 1.5) -- (4,1.5);
      \draw[line width=0.8pt,-] (-4, 1.8) -- (4,1.8);
      
      \draw[line width=0.8pt,-] (-4, 2.5) -- (4,2.5);
      \draw[line width=0.8pt,-] (-4, 2.8) -- (4,2.8);
     
     \node at (5.4,-2.3) {$b_1 = y_0$};
     \draw[->, dashed] (4.6, -2.3) -- (4.1, -2.3);
     
     \node at (-5.4,-2) {$b_1 + \delta$};
     \draw[->, dashed] (-4.6, -2) -- (-4.1, -2);

     \node at (5.0,-1.1) {$b_2$};
     \draw[->, dashed] (4.6, -1.1) -- (4.1, -1.1);
     
     \node at (5.2,1.5) {$b_{\ell - 1}$};
     \draw[->, dashed] (4.6, 1.5) -- (4.1, 1.5);
     
     \node at (-5.4,2.5) {$b_\ell - \delta $};
     \draw[->, dashed] (-4.6, 2.5) -- (-4.1, 2.5);

     \node at (5.4,2.8) {$b_\ell = y_1$};
     \draw[->, dashed] (4.6, 2.8) -- (4.1, 2.8);

\end{tikzpicture}
\end{center}

Now that the grid partition is defined, we have to prove that the winding number
of $F$ vanishes on each rectangle in the grid. 
Take $\Gamma' = 
[a,a']\times [b,b']\subset \Gamma$ as
one of this rectangles. Then either 
there is a single point of $\cZ$ in $\Gamma'$ 
or there is no point of $\cZ$ in 
$\Gamma'$. In the first case, $w(F \, | \, \partial\Gamma') = 0$ by Proposition \ref{prop:non_van_zero_local}
and the choice of $\delta$. In the second case, then either 
$[a, a'] \cap \cX = \emptyset$ or $[b, b'] \cap \cY = \emptyset$.

Let us suppose first that $[b, b'] \cap \cY = \emptyset$ holds
and prove that $w(F^X  \, | \,  \partial\Gamma') = 0$; then
$w(F  \, | \,  \partial\Gamma') = 0$ as well
either because $F^X = F$ or because $F^X = iF$ and by Proposition \ref{prop:mult_wind}.

Since $F$ is well-controlled 
of degree at most $d$
and the polynomials $T_{d_{-1}-1}(S_0, S_1), \dots,$ $T_{d_{s-2}-1}(S_0, S_1)$, 
${\sRes}_{d_{0}}(S_0, S_1), \dots,$ ${\sRes}_{d_{s-1}}(S_0, S_1)$
$\in \R[Y]$ are coefficients of subresultant polynomials of $S_0$ and $S_1$
with respect to variable $X$, their degree in $Y$
is bounded
by $d^2$. 
Since none of these polynomials vanishes on $[b, b']$, 
using the notation and results from Example \ref{exm:crucial} b), 
for $1 \le i \le s-1$ we have that 
$A_i$ and $C_i$ have constant sign different from $0$ on $[b, b']$
and
\begin{itemize}
\item for any $y \in [b, b']$, 
$(S_0(X,y), \dots, S_s(X,y)) \in \R[X]$ is a good Sturm $(\sigma, \tau)$-chain
with respect to $[a, a']$ with all its elements with degree bounded by $d \le d^2$,
\item for any $x \in [a, a']$, 
$(S_0(x,Y), \dots, S_s(x,Y)) \in \R[Y]$ is a good Sturm $(\sigma, \tau)$-chain
with respect to $[b, b']$ with all its elements with degree bounded by $d^2$.
\end{itemize}
Taking into account that 
$$
0 \ne F^X_{\rm re}(X, b),F^X_{\rm im}(X, b),
F^X_{\rm re}(X, b'),F^X_{\rm im}(X, b')   \in \R[X],$$
$$
0 \ne F^X_{\rm re}(a, Y),F^X_{\rm im}(a, Y),
F^X_{\rm re}(a', Y),F^X_{\rm im}(a', Y)  \in \R[Y],$$
we conclude that 
$$\begin{array}{rcl}
2w(F^X \, | \, \partial \Gamma')
&= &
\Ind_{a}^{a'}(F^X_{\rm re}(X, b),F^X_{\rm im}(X,b))
+
\Ind_{b}^{b'}
(F^X_{\rm re}(a',Y),F^X_{\rm im}(a',Y))
\\[2mm]
&&
+
\Ind_{a'}^{a}(F^X_{\rm re}(X, b'),F^X_{\rm im}(X,b'))
+
\Ind_{b'}^{b}
(F^X_{\rm re}(a,Y),F^X_{\rm im}(a,Y))\\[2mm]
&
=
&
\Ind_{a}^{a'}
(S_1(X,b),S_0(X, b))
+
\Ind_{b}^{b'}
(S_1(a',Y),S_0(a',Y))
\\[2mm]
&&
+
\Ind_{a'}^{a}
(S_1(X,b'),S_0(X, b'))
+
\Ind_{b'}^{b} 
(S_1(a,Y),S_0(a,Y))
\\[2mm]
&
=& 
{\rm Var}(\sigma,\tau)_{a}^{a'}(S_0(X, b), \dots, S_s(X, b))
+
{\rm Var}(\sigma,\tau)_{b}^{b'}(S_0(a', Y), \dots, S_s(a', Y))
\\[2mm]
&&
+
{\rm Var}(\sigma,\tau)_{a'}^{a}(S_0(X, b'), \dots, S_s(X, b'))
+
{\rm Var}(\sigma,\tau)_{b'}^{b}(S_0(a, Y), \dots, S_s(a, Y))\\[2mm]
&=& 0
\end{array}
$$
using Corollary \ref{cor:good_Sturm_counting}. 

In case that $[a, a'] \cap \cX = \emptyset$ holds, we proceed in a similar
way
exchanging the roles of $X$ and $Y$,
to prove that 
$w(F^Y \, | \, \partial \Gamma')
= 0$, and then we have that 
$w(F \, | \, \partial \Gamma')
= 0$ again either because $F^Y = F$ or because $F^Y = iF$ and by Proposition \ref{prop:mult_wind}.
\end{proof}

\subsection{The winding number counts the complex roots}

From Example \ref{exm:wind_linear}, Proposition \ref{prop:mult_wind} and the 
Quantitative
Main Lemma \ref{prop:non_van_zero} 
we deduce the following result.

\begin{theorem}\label{thm:wind_counts}
Let $\Gamma \subset \R^2$ be a rectangle and $F\in \C[Z]$ with
$\deg F \le d$ and such that $F$ does not vanish in $\partial \Gamma$. Then 
$w(F \, | \, \partial \Gamma)$ counts the number of zeros of $F$ in the 
interior of $\Gamma$ 
with multiplicity. 
\end{theorem}

\begin{proof}{Proof:} Factorize $F = a \cdot (Z - z_1) \cdot \dots (Z-z_r) 
\cdot \tilde F$ with $a \in \C$, $z_1, \dots, z_r \in \Gamma \setminus \partial \Gamma$
and monic $\tilde F \in \C[Z]$ with no roots in $\Gamma$. 
If $\tilde F = 1$ the result follows from Example \ref{exm:wind_linear} and
Proposition \ref{prop:mult_wind}. Otherwise, since
$\tilde F$ is well-controlled (Example \ref{exmple:monic_well_controlled}) 
the result follows from Example \ref{exm:wind_linear} and
Proposition \ref{prop:mult_wind} and  the Main Lemma \ref{prop:non_van_zero}.
\end{proof}

\subsection{Quantitative Homotopy}

The last ingredient for the proof of Theorem \ref{th:main} is 
a
quantitative homotopy tool
similar to
\cite[Theorem 5.4 and Corollary 5.5, Proposition 5.8 and
Theorem 5.9]{Eis}. Since we need to deal with well-controlled 
polynomials, we have to divide the homotopy in two steps, one for the real 
part and one for the imaginary part.

\begin{theorem}\label{thm:almost_main}
Let $F\in \C[Z]$, with $F \ne 0$ and $\deg F = e \le d$. 
There exists $r \in \R$, $r > 0$ 
such that if $m \ge r$ and $\Gamma := [-m, m] \times [-m, m]$, 
then $w(F \, | \, \partial \Gamma) = e$. 
\end{theorem}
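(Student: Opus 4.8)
The plan is to reduce the statement, via a homotopy argument, to the known computation of the winding number of a monomial (Example \ref{exm:wind_monomial}). First I would write $F = a_0 + a_1 Z + \dots + a_e Z^e$ with $a_e \ne 0$ and, after dividing by $a_e$ (which changes no winding number, since $w(a_e \cdot G \,|\, \partial\Gamma) = w(G \,|\, \partial\Gamma)$ by Proposition \ref{prop:mult_wind} applied to the constant $a_e$, or more elementarily because the Cauchy indices are unchanged by a common nonzero real scaling — here one must be a little careful since $a_e$ is complex, so instead I would factor $a_e$ out and use that $w(a_e G) = w(a_e) + w(G) = w(G)$ as $a_e$ is a nonzero constant with winding number $0$), assume $F$ is monic of degree $e$. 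By Example \ref{exmple:monic_well_controlled}, $F$ is then well-controlled, so the Quantitative Main Lemma \ref{prop:non_van_zero} and Theorem \ref{thm:wind_counts} are available for $F$ and for the homotopy polynomials we will build, as long as their degrees stay $\le d$.

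The key step is to choose $r$ large enough that all roots of $F$ lie strictly inside $\Gamma = [-m,m]^2$ for every $m \ge r$, and moreover that $F$ does not vanish on $\partial\Gamma$. Concretely, I would invoke the standard Cauchy root bound: using $\NnS$ for bounded degree (which follows from $\IVT_{d^2}$, as noted after Proposition \ref{prop:non_van_zero_local}, since $d \ge 2$), one shows every root $z$ of the monic $F$ satisfies $|z| \le 1 + \max_i |a_i| =: r$; here $|z|^2 = \re(z)^2 + \im(z)^2$ makes sense in $\R$ after taking square roots of sums of squares, which is legitimate for the finitely many quantities involved since their degree as polynomials in the data is bounded. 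Then for $m \ge r+1$, say, no root lies on $\partial\Gamma$, and by Theorem \ref{thm:wind_counts} we get $w(F \,|\, \partial\Gamma) = e$ directly, counting the $e$ roots with multiplicity. Actually this already finishes the proof without an explicit homotopy — Theorem \ref{thm:wind_counts} does the homotopy bookkeeping implicitly.

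So the real content is only: produce $r$ with all complex roots of $F$ of modulus $< r$, then apply Theorem \ref{thm:wind_counts}. The main obstacle is the reduction to the monic case when the leading coefficient $a_e$ is a non-real complex number: one cannot simply "divide by $a_e$" inside the Cauchy-index formalism over $\R$, since $a_e$ mixes real and imaginary parts of $F$. I would handle this by writing $a_e = |a_e| u$ with $|u| = 1$, noting $w(|a_e| u F \,|\, \partial\Gamma) = w(uF \,|\, \partial\Gamma)$ (real positive scaling is harmless) and then $uF$ has leading coefficient $u$ of modulus $1$; but $uF$ need not be monic. The cleanest route is to observe that for the purpose of \emph{counting roots} we may still apply Theorem \ref{thm:wind_counts} to $F$ itself once we know $F$ does not vanish on $\partial\Gamma$ and we have factored out the non-well-controlled piece; the proof of Theorem \ref{thm:wind_counts} already decomposes $F = a(Z-z_1)\cdots(Z-z_r)\tilde F$ and handles the constant $a$ and the monic cofactor. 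Thus I expect the write-up to be short: fix $r$ as the Cauchy bound, take any $m \ge r$ so that all $e$ roots are interior and none lie on $\partial\Gamma$, and conclude $w(F \,|\, \partial\Gamma) = e$ by Theorem \ref{thm:wind_counts}. The one technical point requiring care is justifying the Cauchy root bound using only $\IVT_{d^2}$ rather than full $\IVT$, which is fine because only square roots of finitely many explicitly bounded-degree quantities are needed.
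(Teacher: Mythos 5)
Your proposal is circular. The step ``choose $r$ via the Cauchy root bound so that all $e$ roots of $F$ lie strictly inside $\Gamma$, then apply Theorem \ref{thm:wind_counts} to get $w(F \,|\, \partial \Gamma) = e$'' presupposes that $F$ actually has $e$ roots in $\C$ counted with multiplicity --- but that is exactly the Fundamental Theorem of Algebra, which is what this whole machinery is being assembled to prove. In the setting of the paper, $(\R,\le)$ only satisfies $\IVT_{d^2}$ and $\C$ is not yet known to be algebraically closed; a priori $F$ could have fewer than $e$ roots, possibly none. Theorem \ref{thm:wind_counts} only says that $w(F \,|\, \partial \Gamma)$ equals the number of roots of $F$ that happen to lie in the interior of $\Gamma$ (its proof factors out whatever roots exist and applies the Quantitative Main Lemma to the rootless cofactor); if $F$ had no roots it would give $w(F \,|\, \partial \Gamma) = 0$, not $e$. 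So your ``Theorem \ref{thm:wind_counts} does the homotopy bookkeeping implicitly'' is precisely where the argument collapses: the non-circular content of Theorem \ref{thm:almost_main} is to compute the winding number \emph{without} any reference to the (unknown) roots of $F$, so that combining it with Theorem \ref{thm:wind_counts} afterwards \emph{forces} roots to exist.

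The paper's proof does this by an explicit homotopy to the monomial: after reducing to $F$ monic (Proposition \ref{prop:mult_wind}), it writes $G := F - Z^e$ and interpolates in two stages, $H_0 = (X+iY)^e + T\,G_{\rm re}$ from $Z^e$ to the auxiliary $K = (X+iY)^e + G_{\rm re}$, and $H_1 = (X+iY)^e + G_{\rm re} + i\,T\,G_{\rm im}$ from $K$ to $F$ (two stages so that the restrictions to the faces stay well-controlled). With $r := 1 + 2\max_j |a_j + i b_j|$ one checks $|H_k(x,y,t)| \ge 1$ on $\partial\Gamma \times [0,1]$, the face restrictions are well-controlled of degree at most $d$, so the Quantitative Main Lemma \ref{prop:non_van_zero} kills the side winding numbers, and Lemma \ref{lem:homot_cube} then gives $w(F \,|\, \partial\Gamma) = w(Z^e \,|\, \partial\Gamma) = e$ by Example \ref{exm:wind_monomial}. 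Some ingredient of this kind (a root-free computation of the winding number of $F$ on a large square) is unavoidable; your Cauchy-bound shortcut cannot replace it.
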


\begin{proof}{Proof:} 
If $e = 0$ there is nothing to prove, so we suppose $e > 0$.
By Proposition \ref{prop:mult_wind}, we can also suppose that $F$
is monic. Let
$$
F = Z^e + \sum_{j = 0}^{e-1} (a_j+ ib_j)Z^j
$$
with $a_j, b_j \in \R$ for $0 \le j \le e-1$ and take $G := F - Z^e \in \C[Z]$
collecting all the terms of degree less than $e$ in $F$. 

We take the auxiliary polynomial 
$$
K := (X + iY)^e + G_{\rm re} \in \C[X, Y].
$$
Note that in general $K$ does not come from a polynomial in $\C[Z]$
by means of the substitution $Z = X+iY$.
The idea of the proof is to obtain $r \in \R$, $r >0$
such that if $m \ge r$ and $\Gamma = [-m, m] \times [-m, m]$, 
then $e = w(Z^e \, | \, \partial \Gamma) = 
w(K \, | \, \partial \Gamma)
= w(F \, | \, \partial \Gamma)$ (see Example
\ref{exm:wind_monomial}).

We suppose  $G_{\rm re}, G_{\rm im} \ne 0$, and if this is not the case, the rest of the 
proof can 
be simplified. Actually, the only case where $G_{\rm re} = 0$ or $G_{\rm im} = 0$ is 
$G \in \R \cup i \R$.

We define $H_0, H_1 \in \C[X, Y, T]$ by
$$
H_0(X, Y, T) 
:= 
(1-T)(X+iY)^e + TK(X, Y) 
=
(X + iY)^e + T \, G_{\rm re}(X,Y)
$$
and
$$
H_1(X, Y, T) 
:= 
(1-T)K(X, Y) + TF(X + i Y)
=
(X + iY)^e + G_{\rm re}(X,Y) + i \,  T \, G_{\rm im}(X,Y). 
$$

Take $r := 1 + 2\max\{|a_j + ib_j| \, | \, 0 \le j \le e-1\}$. 
We proceed similarly to  \cite[Proposition 5.8]{Eis} to prove that
both $H_0,  H_1$ do not vanish on $\partial \Gamma \times [0, 1]$. 
For $k = 0, 1$, and $(x, y, t) \in \partial \Gamma \times [0, 1]$, 
we have
$$
\begin{array}{cl}
& |H_k(x, y, t) - (x+iy)^e| \\[3mm]
\le  &  2\displaystyle{\Big| \sum_{j = 0}^{e-1} (a_j+ ib_j)(x + iy)^j \Big|} \\[6mm]
\le  &  2\displaystyle{ \sum_{j = 0}^{e-1} |a_j+ ib_j| |x + iy|^j} \\[6mm]
\le  &  (r-1)\displaystyle{ \sum_{j = 0}^{e-1}  |x + iy|^j} \\[6mm]
\le  & |x+iy|^e -1. \cr
\end{array}
$$
Therefore
$$
|H_k(x, y, t)| \ge |(x+iy)^e| - |H_k(x, y, t) - (x+iy)^e| \ge |x+iy|^e - |x+iy|^e + 1 = 1. 
$$

Now enlarge $r$ if necessary so that $X \pm m$ and $Y \pm m$ are not factors 
of $F_{\rm re}(X, Y), G_{\rm re}(X, Y)$ and $G_{\rm im}(X, Y)$.
Then it can be verified that the polynomials $H_0(X, -m, T), 
H_0(X, m, T), H_1(X, -m, T),$ $H_1(X, m, T)$ $\in \C[X, T]$
and $H_0(-m, Y, T), 
H_0(m, Y, T), H_1(-m, Y, T), H_1(m, Y, T) \in \C[Y, T]$
are of degree at most $d$ 
and well-controlled. 

Finally, take $\Gamma_X := [-m, m] \times [0, 1]$, $\Gamma_Y := [-m, m] \times [0, 1]$ and $\Gamma_T := \Gamma$. 
By the Main Lemma \ref{prop:non_van_zero},
$$
w(H_0(X, -m, T) \, | \, \partial \Gamma_Y) = 
w(H_0(X, m, T) \, | \, \partial \Gamma_Y) 
=
w(H_1(X, -m, T) \, | \, \partial \Gamma_Y) =
w(H_1(X, m, T) \, | \, \partial  \Gamma_Y) =0
$$
and
$$
w(H_0(-m, Y, T) \, | \, \partial \Gamma_X) =
w(H_0(m, Y, T) \, | \, \partial \Gamma_X) =
w(H_1(-m, Y, T) \, | \, \partial \Gamma_X) =
w(H_1(m, Y, T) \, | \, \partial  \Gamma_X) = 0.
$$
Therefore, 
by Lemma \ref{lem:homot_cube} applied to $H_0$ and $H_1$ and the cube $[-m,m] 
\times [-m,m] \times [0,1] \subset \R^3$, we have
$$
e=w(Z^e \, | \, \partial \Gamma) 
= 
w(H_0(X, Y, 0) \, | \, \partial \Gamma_T) 
=
w(H_0(X, Y, 1) \, | \, \partial \Gamma_T) 
=
w(K \, | \, \partial \Gamma) 
$$
and
$$
w(K \, | \, \partial \Gamma) 
=
w(H_1(X, Y, 0) \, | \, \partial \Gamma_T) 
=
w(H_1(X, Y, 1) \, | \, \partial \Gamma_T) 
=
w(F \, | \, \partial \Gamma)
$$
as we wanted to prove. 
\end{proof}

\subsection{Proof of Theorem \ref{th:main}}
We are now ready to deduce our main result.

\begin{proof}{Proof of Theorem \ref{th:main}:}
As mentioned before, since $\FTA_1$ holds even under no assumptions on $(\R, \le)$ 
we suppose  $d \ge 2$.
Take $F \in \C[Z] \setminus \C$ with $\deg F = e \le d$. 
By Theorem \ref{thm:almost_main},
there exists $r \in \R$, $r > 0$ such that if $m \ge r$ and $\Gamma := [-m, m] 
\times [-m, m]$, 
then $w(F \, | \, \partial \Gamma) = e \ge 1$.
By Theorem \ref{thm:wind_counts},
$w(F \, | \, \partial \Gamma)$ counts the number of zeros of $F$ in the 
interior of $\Gamma$ with multiplicity. 
This implies that there exists at least one $z \in \Gamma \subset \R^2 \sim \C$
such that $F(z) = 0$. 
\end{proof}

\textbf{Acknowledgement:} We would like to express our gratitude to the anonymous referees whose suggestions
helped us to improve the readability of the paper.

\end{document}